\newtheorem{prop}{Proposition}
\newtheorem{ex}{Example}
\newtheorem{lemma}{Lemma}
\newtheorem{remark}{Remark}
\newtheorem{teo}{Theorem}
\newtheorem{definition}{Definition}
\newcommand\blfootnote[1]{%
	\begingroup
	\renewcommand\thefootnote{}\footnote{#1}%
	\addtocounter{footnote}{-1}%
	\endgroup
}
\author{
	Roberta Flenghi$^1$\\
	\texttt{roberta.flenghi@enpc.fr}
	\and
	Benjamin Jourdain$^1$\\
	\texttt{benjamin.jourdain@enpc.fr}
}
\date{%
	$^1$Cermics, \'Ecole des Ponts, INRIA, Marne-la-Vall\'ee, France.\\%

}
\newcounter{hypoconbisfl}
\newcounter{saveconbisfl}
\newcommand\debutL{\begin{list} {\textbf{L\arabic{hypoconbisfl}}}{\usecounter{hypoconbisfl}}\setcounter{hypoconbisfl}{\value{saveconbisfl}}}
	\newcommand\finL{\end{list}\setcounter{saveconbisfl}{\value{hypoconbisfl}}}
\newcounter{hypoconbisf}
\newcounter{saveconbisf}
\newcommand\debutRU{\begin{list} {\textbf{RU\arabic{hypoconbisf}}}{\usecounter{hypoconbisf}}\setcounter{hypoconbisf}{\value{saveconbisf}}}
	\newcommand\finRU{\end{list}\setcounter{saveconbisf}{\value{hypoconbisf}}}
\newcounter{hypoconbisx}
\newcounter{saveconbisx}
\newcommand\debutTX{\begin{list} {\textbf{TX\arabic{hypoconbisx}}}{\usecounter{hypoconbisx}}\setcounter{hypoconbisx}{\value{saveconbisx}}}
	\newcommand\finTX{\end{list}\setcounter{saveconbisx}{\value{hypoconbisx}}}
\newcommand{\mathleft}{\@fleqntrue\@mathmargin0pt}
\newcommand{\mathcenter}{\@fleqnfalse}
\title{Central limit theorem over non-linear functionals of empirical measures: beyond the iid setting}
\begin{document}

	\maketitle	
	\blfootnote{``This work is
	supported by the french National Research Agency under the grant
	ANR-21-CE40-0006 (SINEQ).'' 
}
	\section*{Introduction}
	In this work we are interested in the convergence in distribution of
	
	\begin{equation}\label{purpose}
		\sqrt{N}\left(U\left(\frac{1}{N}\sum_{i=1}^{N}\delta_{X_i} \right)-U\left(\mu\right)\right)  
	\end{equation}
	
	where $U$ is a general function defined on some Wasserstein space of probability measures on $\mathbb{R}^d$ and $\left( X_i\right)_{i\geq 1} $ is a given sequence of $\mathbb{R}^d$-valued random vectors.\\ In the mathematical statistics literature, Von Mises \cite{vonmises1} \cite{vonmises2} (see also Chapter 6 of \cite{VonMises}) was the first to develop  an approach for deriving the asymptotic distribution theory of functionals $U(\frac{1}{N}\sum_{i=1}^{N}\delta_{X_i})$ (called ``statistical functions'') under the assumption of independent and identically distributed real-valued random variables $\left( X_i\right)_{i\geq 1} $. \\Through the use of the G\^{a}teaux differential, he introduced a Taylor expansion of $U(\frac{1}{N}\sum_{i=1}^{N}\delta_{X_i})$ around $U(\mu)$ where $\mu$ is the common distribution of the random variables. He proved that if the linear term is the first 
	nonvanishing term in the Taylor expansion of the functional $U(.)$ at $\mu$, the limit distribution 
	is normal (under the usual restrictions corresponding to the central limit 
	theorem). One of the main difficulty was to prove that the remainder in the Taylor expansion goes to zero.\\
	In dimension $d=1$, Boos and Serfling \cite{BoosSerfling} assumed the existence of a differential for $U(.)$ at $\mu$ in a sense stronger than the G\^{a}teaux differential. They assumed the existence of $\frac{d}{d\epsilon}_{|\epsilon=0^+}U\left(\mu+\epsilon\left(\nu-\mu\right)  \right)= dU\left(\mu,\nu-\mu\right)$ at $\mu$ that is linear in $\nu-\mu$ (for $\nu$ any probability measure on the real line ) and such that  
	\begin{equation} \label{boosserfling}
		U\left(\frac{1}{N}\sum_{i=1}^{N}\delta_{X_i} \right)-U\left(\mu\right)=\frac{1}{N}\sum_{i=1}^{N}dU\left(\mu,\delta_{X_i}-\mu \right)+ o\left(\left\|\frac{1}{N}\sum_{i=1}^{N}1_{\left\lbrace X_i\leq\cdot \right\rbrace }-\mu\left(-\infty,\cdot \right]  \right\|_\infty  \right) .
	\end{equation}
	
	Taking 
	advantage of known stochastic properties of the Kolmogorov-Smirnov distance, in particular the boundness in probability of $\left(\sqrt{N}\left(\left\|\frac{1}{N}\sum_{i=1}^{N}1_{\left\lbrace X_i\leq\cdot \right\rbrace }-\mu\left(-\infty,\cdot \right]  \right\|_\infty  \right)  \right)_{N\geq1}$, they conclude that $\sqrt{N}\left( U\left(\frac{1}{N}\sum_{i=1}^{N}\delta_{X_i} \right)-U\left(\mu\right)\right) $ converges in distribution to a centered Gaussian random
	variable with asymptotic variance equal to the common variance of the independent and identically	distributed random variables $dU\left(\mu,\delta_{X_i}-\mu \right)$ when they are square integrable and centered. In addition to the limitation of their approach to dimension $d=1$, it relies on the uniformity in \eqref{boosserfling} of the approximation with respect to the Kolmogorov-Smirnov distance which is a strong assumption almost amounting to Fréchet differentiability of $U$ at $\mu$ for the Kolmogorov norm.
	When $\mu$ is a probability measure on any measurable space, Dudley \cite{Dudley} obtains central limit theorems for $\sqrt{N}(U(\frac{1}{N}\sum_{i=1}^N\delta_{X_i})-U(\mu))$ under Fréchet differentiability of $U$ at $\mu$ with respect to $\|\nu-\mu\|=\sup_{f\in{\mathcal F}}\left|\int f(x)(\nu-\mu)(dx)\right|$ where the class ${\mathcal F}$ of measurable functions is such that a central limit theorem for empirical measures holds with respect to uniform convergence over ${\mathcal F}$. Clearly the requirements on ${\mathcal F}$ impose some balance: the larger the class ${\mathcal F}$, the easier Fréchet differentiability becomes, but the stronger the uniform convergence over ${\mathcal F}$ becomes. \\
	Recently Jourdain and Tse \cite{Jourdain} reconsidered the same problem under the assumption of independent and identically distributed $\mathbb{R}^d$-valued random vectors $\left(X_i \right)_{i\geq1} $. By means of the notion of the linear functional derivative of $U$ that is a G\^{a}teaux differential with the property that $dU\left(\mu,\nu-\mu\right)=\int_{\mathbb{R}^d}\frac{\delta U}{\delta m}\left(\mu,y \right)\left(\nu-\mu\right)(dy) $ for some measurable real valued function $\mathbb{R}^d\ni y\rightarrow \frac{\delta U}{\delta m}\left(\mu,y \right)$
	with some polynomial growth assumption in $y$,  they linearize $\sqrt{N}\left(U\left(\frac{1}{N}\sum_{i=1}^{N}\delta_{X_i} \right)-U\left(\mu\right)\right)$ into the sum of

	\begin{equation} \label{linearization}
		\frac{1}{\sqrt{N}}\sum_{i=1}^{N}\left( \frac{\delta U}{\delta m}\left(\frac{1}{N}\sum_{j=1}^{i-1}\delta_{X_j}+\frac{N+1-i}{N}\mu,X_i \right)- \int_{\mathbb{R}^d} \frac{\delta U}{\delta m}\left(\frac{1}{N}\sum_{j=1}^{i-1}\delta_{X_j}+\frac{N+1-i}{N}\mu,x \right)\mu(dx)\right)
	\end{equation} 
	and a remainder. Such a decomposition allows to apply to the above sum the Central Limit Theorem for arrays of martingale increments and to investigate sufficient conditions for the remainder to vanish in probability. They finally proved that 
	
	$$\sqrt{N}\left(U\left(\frac{1}{N}\sum_{i=1}^{N}\delta_{X_i} \right)-U\left(\mu\right)\right) \overset{d}{\Longrightarrow } \mathcal{N}\left(0, Var\left( \frac{\delta U}{\delta m}\left(\mu,X_1 \right)\right)  \right).$$
        They replace the uniformity leading to Fréchet differentiability required in the statistical literature, by supposing that the linear functional derivative exists not only at $\mu$ but on a Wasserstein ball with positive radius containing $\mu$. This is a mild restriction, since when a central limit theorem holds for some statistical functional, it is in general not limited to a single value of the common distribution $\mu$ of the samples.

        The aim of this work is to generalize what has been done by Jourdain and Tse. We will first relax the equidistribution assumption by studying the convergence of (\ref{purpose}) by assuming that the $\mathbb{R}^d$-valued random vectors $(X_i)_{i\geq1}$ are independent and non-equidistributed and we denote the law of $X_i$ by $\nu_{i}$.
	Since in our case we do not assume the equidistribution of the random variables, we need to give sufficient conditions for  $\frac{1}{N}\sum_{i=1}^{N}\delta_{X_i}$ and for $\frac{1}{N}\sum_{i=1}^{N}\nu_{i} $ to converge to a common limit $\mu$ (for a distance that will be specified later). We will split $ \sqrt{N}\left(U\left(\frac{1}{N}\sum_{i=1}^{N}\delta_{X_i} \right)-U\left(\mu\right)\right)$ into the sum
	\begin{equation}
		\sqrt{N}\left(U\left(\frac{1}{N}\sum_{i=1}^{N}\nu_{i} \right)-U\left(\mu\right)\right)+\sqrt{N}\left(U\left(\frac{1}{N}\sum_{i=1}^{N}\delta_{X_i} \right)-U\left(\frac{1}{N}\sum_{i=1}^{N}\nu_{i}\right)\right) .
	\end{equation} \\
	We will give sufficient conditions for the first component to converge to a constant that will be specified later. For what concerns the second component we generalize the linearization argument by means of the linear functional derivative $\frac{\delta U}{\delta m}$ in (\ref{linearization}) to prove the convergence in distribution to a gaussian random variable.\newline \\
	We next get rid of the  independence hypothesis by assuming that the sequence of random vectors $\left( X_i\right)_{i\geq 1} $ to be a $\mathbb{R}^d$-valued Markov chain with transition kernel $P$ and unique invariant probability measure $\mu$.
	By using the linear functional derivative $\frac{\delta U}{\delta m}$, we again linearize $\sqrt{N}\left(U\left(\frac{1}{N}\sum_{i=1}^{N}\delta_{X_i} \right)-U\left(\mu\right)\right)$ into the sum of $(\ref{linearization})$
	and a remainder. Under assumptions on the Markov kernel $P$ that ensure that the central limit theorem holds for linear functionals and by giving sufficient conditions for the remainder to vanish in probability as $N$ goes to infinity, we conclude that 
	$$\sqrt{N}\left(U\left(\frac{1}{N}\sum_{i=1}^{N}\delta_{X_i} \right)-U\left(\mu\right)\right) \overset{d}{\Longrightarrow } \mathcal{N}\left(0, \mu\left(PF^2\left(\mu, \cdot \right)\right)-\mu\left( \left(PF \right)^2\left(\mu, \cdot \right)  \right)\right)$$ 
	
	with $F$ solution of the Poisson equation 
	$$F\left(\mu,x \right)-PF\left(\mu,x \right)= \frac{\delta U}{\delta m}\left(\mu,x\right)- \int_{\mathbb{R}^d}\frac{\delta U}{\delta m}\left(\mu,y\right)\mu(dy), \quad x\in \mathbb{R}^d.$$ 
	As far as we know, such a generalization to non i.i.d. random vectors $X_i$ appears for the first time in the literature. This illustrates that the linear functional derivative is a versatile tool. \\
	
	In the first section, we will provide the statement of the two results. Each of them will be preceeded by reminders useful for its understanding. Together with the independent non-equidistributed case we will recall the notions of Wasserstein distance and linear functional derivative and together with the Markov chains case we will recall definitions and general facts about Markov chains and the Poisson equation.
	In the second section, the proofs of the two results are given. 
	
	\section{Main Results}
	\subsection{Independent Non-Equidistributed Random Variables} 
	Let us observe that the case of a linear functional $U$, that is $U(m)=\int_{\mathbb{R}^d}f(x)m(dx)$ with $f:\mathbb{R}^d\rightarrow \mathbb{R}$ a measurable function, has been largely studied in this context.\\
	We recall that a sequence $(f(X_i))_{i\geq 1}$ is Strongly Residually Cesaro $\beta$-Integrable for some $\beta >0$ (SRCI($\beta$), in short) if 
	\begin{enumerate}[label=(\roman*)]
		\item $\sup_{N\geq 1}\dfrac{1}{N}\mathlarger{\sum}_{i=1}^{N}\mathbb{E}(|f(X_i)|)<\infty$
		\item $\mathlarger{\sum}_{i=1}^{\infty}\dfrac{1}{i}\mathbb{E}\left(\left( |f(X_i)|-i^\beta\right) 1_{\left\lbrace|f(X_i)|> i^\beta \right\rbrace } \right)  < \infty $.
	\end{enumerate}
	
	Chandra and Goswami \cite[Theorem 4.1]{chandra} proved that if $(f(X_i))_{i\geq 1}$ is a sequence of random variables  pairwise independent and verifying the condition SRCI($\beta$) for some $\beta \in (0,1)$, then the Strong Law of Large Numbers holds:
	
	\begin{equation} \label{lln}
		\lim_{N\rightarrow \infty} \dfrac{\sum_{i=1}^{N} (f(X_i)-\mathbb{E}(f(X_i)))}{N} = 0 \quad a.s. 
	\end{equation} 
	
	Moreover Lindeberg proved (see for instance \cite{billingsley}) that, also in this case, the Central Limit Theorem holds. More in detail, consider a sequence $(f(X_i))_{i\geq 1}$  of square-integrable independent random variables such that $\lim_{N\rightarrow \infty}\frac{\mathlarger{\sum}_{i=1}^{N}Var(f(X_i))}{N}=\sigma^2$ where $\sigma^2>0$. If moreover the Lindeberg's condition is satified
	$$ \forall \epsilon >0 \quad \lim_{N\rightarrow \infty}\dfrac{1}{N} \mathlarger{\sum}_{i=1}^{N} \mathbb{E}((f(X_i)-\mathbb{E}(f(X_i)))^21_{\left| f(X_i)-\mathbb{E}(f(X_i))\right|>\epsilon \sqrt{N} })=0, $$
	then 
	
	\begin{equation} \label{clt}
		\dfrac{\sum_{i=1}^{N} (f(X_i)-\mathbb{E}(f(X_i)))}{\sqrt{N}} \overset{d}{\Longrightarrow } \mathcal{N}(0,\sigma^2).
	\end{equation} 
	Before giving the statement of the Central Limit Theorem for general $U$ in the case of independent non-equidistributed random variables, let us recall some notions about the Wasserstein distance and the linear functional derivative.
	\subsubsection{The Wasserstein distance and the linear functional derivative} 
	
	Let $U:\mathcal{P}_\ell\left( \mathbb{R}^d\right)\rightarrow \mathbb{R}$ where for $\ell \geq 0$ we denote by $\mathcal{P}_\ell\left( \mathbb{R}^d\right)$ the set of probability measures $m$ on $\mathbb{R}^d$ such that $\int_{\mathbb{R}^d}\left|x \right|^\ell m(dx)<\infty$. For $\ell>0$, we consider the $\ell$-Wasserstein metric defined for $\mu_1,\mu_2 \in \mathcal{P}_\ell\left( \mathbb{R}^d\right)$ by 
	\begin{equation} \label{wasserstein}
		W_\ell\left(\mu_1,\mu_2\right) = \inf\left\lbrace \left(\int_{\mathbb{R}^{2d}}\left|x-y \right|^\ell \rho\left(dx,dy\right)  \right)^{\frac{1}{\ell \vee 1}} : \rho\in \mathcal{P}(\mathbb{R}^{2d})\,with\,  \rho\left(\cdot\times\mathbb{R}^d \right)=\mu_1(\cdot), \rho\left(\mathbb{R}^d\times \cdot \right)=\mu_2(\cdot)  \right\rbrace.	
	\end{equation}
	If $\mu \in \mathcal{P}_\ell\left(\mathbb{R}^d\right)$ and $\left(\mu_n\right)_{n\in \mathbb{N}}$ is a sequence in this space, then $\lim_{n\to\infty} W_\ell\left(\mu_n,\mu \right)=0$ if and only if $\lim_{n\rightarrow \infty}\int_{\mathbb{R}^d}|x|^\ell\mu_{n}(dx)=\int_{\mathbb{R}^d}|x|^\ell\mu(dx)$ and $\mu_n$ converges weakly to $\mu$ as $n\to\infty$ where we write $\mu_n\rightharpoonup  \mu$ to denote the weak convergence. Alternatively $\lim_{n\to\infty} W_\ell\left(\mu_n,\mu \right)=0$ if and only if $\forall \phi:\mathbb{R}^d\rightarrow \mathbb{R}$ continous $\mu$-almost everywhere and such that $\sup_{x\in \mathbb{R}^d}\frac{|\phi(x)|}{1+|x|^\ell}<\infty,$
	\begin{equation} \label{carat_wass}
		\lim_{n\to\infty}\int_{\mathbb{R}^d}\phi(x)\mu_{n}(dx)=\int_{\mathbb{R}^d}\phi(x)\mu(dx).
	\end{equation}\\
	For $\ell=0$ and $\mu_1,\mu_2 \in \mathcal{P}_0\left( \mathbb{R}^d\right)$, we consider 
	
	$$W_0\left(\mu_1,\mu_2\right) = \inf\left\lbrace \int_{\mathbb{R}^{2d}}(1\wedge\left|x-y\right|)\rho\left(dx,dy\right)   :\rho\in \mathcal{P}(\mathbb{R}^{2d})\,with\,  \rho\left(\cdot\times\mathbb{R}^d \right)=\mu_1(\cdot), \rho\left(\mathbb{R}^d\times \cdot \right)=\mu_2(\cdot)   \right\rbrace.$$
	Notice that $W_0$ metricizes the topology of weak convergence.
	For $\ell\geq 0$ we can also consider $\mathcal{M}_\ell(\mathbb{R}^d)$, the space of the signed measures $\tau$ on $\mathbb{R}^d$ such that $\int_{\mathbb{R}^d}|x|^\ell |\tau|(dx)<\infty$ where $|\cdot|$ denotes the total variation of a signed measure.
	For each $\tau \in \mathcal{M}_\ell(\mathbb{R}^d)$ we will define the norm $\left\|\tau \right\|_\ell= \sup_{f:|f(x)|\leq 1+|x|^\ell} \int_{\mathbb{R}^d}f(x)\tau(dx)$ where the supremum is computed over the set of the measurable functions satisfying the growth condition and it can be proved that given $\tau\in\mathcal{M}_\ell(\mathbb{R}^d)$ and $(\tau_n)_{n\in \mathbb{N}}$ a sequence in this space such that $\left\|\tau_n-\tau \right\|_\ell\rightarrow 0,$ then $\left\||\tau_n|-|\tau| \right\|_\ell\rightarrow 0.$\\
	Let us observe that in $\mathcal{P}(\mathbb{R}^d)$ the convergence with respect to $\left\|\cdot \right\|_\ell$ implies the convergence with respect to $W_\ell$. Moreover if $\ell=0$, for $\mu_1,\mu_2\in \mathcal{P}(\mathbb{R}^d)$ we have
	\begin{align*}
				\left\|\mu_1-\mu_2 \right\|_0&=\sup_{\left\|f \right\|_\infty\leq 1 }\left|\int_{\mathbb{R}^d}f(x)(\mu_1(dx)-\mu_2(dx)) \right| = 2d_{TV}(\mu_1,\mu_2)\\
				&=2\sup_{A\in \mathcal{B}\left(\mathbb{R}^d \right) }\left|\mu_1(A)-\mu_2(A) \right|=\left|\mu_1-\mu_2 \right|\left(\mathbb{R}^d \right)
	\end{align*}
	where $\mathcal{B}\left(\mathbb{R}^d \right)$ denotes the Borel $\sigma$-algebra of $\mathbb{R}^d$ and $d_{TV}$ the total variation distance between $\mu_1$ and $\mu_2$.
	Let us now recall the notion of (first order) linear functional derivative associated to $U$. For a more detailed description, including the definition of the linear functional derivative of a superior order, see \cite{linearfunctionalderivative}.

	\begin{definition}
		Let $\ell \geq 0$. A function $U:\mathcal{P}_\ell\left(\mathbb{R}^d\right) \rightarrow \mathbb{R}$ admits a linear functional derivative at $\mu \in \mathcal{P}_\ell\left(\mathbb{R}^d\right)$ if there exists a measurable function $\mathbb{R}^d\ni y \mapsto \frac{\delta U}{\delta m}\left(\mu,y\right) $ such that $\sup_{y\in \mathbb{R}^d}\frac{\left| \frac{\delta U}{\delta m}\left(\mu,y\right)\right| }{1+|y|^\ell}< \infty$ and
		$$\forall \nu \in \mathcal{P}_\ell\left(\mathbb{R}^d\right), \frac{d}{d\epsilon}_{\lvert_{\epsilon=0^+}} U\left(\mu+\epsilon(\nu-\mu) \right)= \int_{\mathbb{R}^d}  \frac{\delta U}{\delta m}\left(\mu,y\right) (\nu - \mu)(dy).$$ 
	\end{definition}
	
	The next lemma allows to express a finite difference of the function $U$ as an integral of the functional linear derivative. 
	
	\begin{lemma} \label{linearderivative}
		Let $\ell \geq 0$, $m$, $m'\in \mathcal{P}_\ell\left(\mathbb{R}^d\right)$ and suppose that the linear functional derivative of a function $U:\mathcal{P}_\ell\left(\mathbb{R}^d\right)  \rightarrow \mathbb{R} $ exists in the segment $\left(m_s:= sm'+(1-s)m\right)_{s\in \left[ 0,1\right] } $. Then if $\sup_{(s,y)\in \left[0,1 \right]\times \mathbb{R}^d}\frac{\left| \frac{\delta U}{\delta m}\left(m_s,y\right)\right| }{1+|y|^\ell}< \infty$, one has
		\begin{equation}\label{linearfunc}
			U(m')-U(m)= \int_{0}^{1}\int_{\mathbb{R}^d}\frac{\delta U}{\delta m}((1-s)m+sm',y)(m'-m)(dy)ds.
		\end{equation}
		
	\end{lemma}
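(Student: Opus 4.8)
The plan is to reduce the statement to the one\nobreakdash-dimensional fundamental theorem of calculus applied to the function $g:[0,1]\to\mathbb{R}$ defined by $g(s)=U(m_s)$ with $m_s=(1-s)m+sm'$. Each $m_s$ indeed lies in $\mathcal{P}_\ell(\mathbb{R}^d)$, since $\int_{\mathbb{R}^d}|x|^\ell m_s(dx)=(1-s)\int_{\mathbb{R}^d}|x|^\ell m(dx)+s\int_{\mathbb{R}^d}|x|^\ell m'(dx)<\infty$. Setting $\phi(s):=\int_{\mathbb{R}^d}\frac{\delta U}{\delta m}(m_s,y)(m'-m)(dy)$, the identity \eqref{linearfunc} reads $g(1)-g(0)=\int_0^1\phi(s)\,ds$. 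First I would record that $\phi(s)$ is well defined for every $s$: by hypothesis $\left|\frac{\delta U}{\delta m}(m_s,y)\right|\le C(1+|y|^\ell)$ for a constant $C$ independent of $(s,y)$, and since $|m'-m|\le m+m'$ one has $\int_{\mathbb{R}^d}(1+|y|^\ell)\,|m'-m|(dy)\le 2+\int_{\mathbb{R}^d}|y|^\ell m(dy)+\int_{\mathbb{R}^d}|y|^\ell m'(dy)<\infty$, whence $\sup_{s\in[0,1]}|\phi(s)|<\infty$.

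The core step is to show that $g$ is differentiable on $(0,1)$ with $g'=\phi$ there, with the obvious one\nobreakdash-sided versions at the endpoints. For the right derivative at $s\in[0,1)$, observe that for $h>0$ small, $m_{s+h}=m_s+h(m'-m)=m_s+\epsilon(m'-m_s)$ with $\epsilon=\tfrac{h}{1-s}\to 0^+$, because $m'-m_s=(1-s)(m'-m)$. Applying the definition of the linear functional derivative at the point $m_s$ with the probability measure $m'$ gives $\lim_{h\to0^+}\frac{g(s+h)-g(s)}{h}=\frac{1}{1-s}\int_{\mathbb{R}^d}\frac{\delta U}{\delta m}(m_s,y)(m'-m_s)(dy)=\phi(s)$. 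Symmetrically, for the left derivative at $s\in(0,1]$, for $h<0$ small one has $m_{s+h}=m_s+\epsilon(m-m_s)$ with $\epsilon=-\tfrac{h}{s}\to0^+$ (using $m-m_s=-s(m'-m)$), and the definition at $m_s$ with the probability measure $m$ gives $\lim_{h\to0^-}\frac{g(s+h)-g(s)}{h}=-\frac{1}{s}\int_{\mathbb{R}^d}\frac{\delta U}{\delta m}(m_s,y)(m-m_s)(dy)=\phi(s)$. Hence $g$ is differentiable on $(0,1)$ with $g'=\phi$, and admits one\nobreakdash-sided derivatives equal to $\phi$ at $0$ and $1$.

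It remains to pass from this pointwise differentiability to the integral identity. Since $g$ has a one\nobreakdash-sided derivative at every point of $[0,1]$, it is continuous from that side there, hence continuous on $[0,1]$. Combined with differentiability on $(0,1)$ and the uniform bound $\sup_{s\in[0,1]}|\phi(s)|=:L<\infty$, the mean value theorem gives $|g(t)-g(s)|\le L\,|t-s|$ for all $s,t\in[0,1]$, so $g$ is Lipschitz, hence absolutely continuous. The function $\phi=g'$ is Borel measurable, being a pointwise limit of continuous difference quotients, and bounded, so the fundamental theorem of calculus for absolutely continuous functions yields $g(1)-g(0)=\int_0^1 g'(s)\,ds=\int_0^1\phi(s)\,ds$, which is \eqref{linearfunc}. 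The only genuinely delicate point is the core step: the linear functional derivative is defined only as a right derivative at $\epsilon=0^+$ along rays issued from the base measure, so obtaining the two\nobreakdash-sided derivative of $g$ at an interior $s$ forces one to view the segment both as a ray from $m_s$ toward $m'$ and as a ray from $m_s$ toward $m$, and to check that the two resulting expressions coincide; the remaining arguments are soft.
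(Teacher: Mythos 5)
Your proof is correct. There is no in-paper argument to compare against: the paper states Lemma \ref{linearderivative} without proof, deferring to the literature on the linear functional derivative (see \cite{linearfunctionalderivative}, \cite{Jourdain}), and your argument is essentially the standard one used there, namely the fundamental theorem of calculus applied to $g(s)=U(m_s)$. The two genuinely delicate points are handled properly: since the definition only yields a right derivative at $\epsilon=0^+$ along rays issued from the base measure, you correctly obtain the two-sided derivative of $g$ at interior $s$ by applying the definition at $m_s$ once with $\nu=m'$ and once with $\nu=m$, using $m'-m_s=(1-s)(m'-m)$ and $m-m_s=-s(m'-m)$; and since only a uniform growth bound (no continuity in $s$ of the derivative) is assumed, you rightly pass to the integral identity via the mean value theorem (so $g$ is Lipschitz, hence absolutely continuous) together with the measurability of $\phi=g'$ as a pointwise limit of continuous difference quotients, rather than invoking continuity of $g'$, which is not available under the stated hypotheses.
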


	\subsubsection{Statement of the theorem}

	Given a measure $\mu\in\mathcal{P}_\ell\left(\mathbb{R}^d \right)$, we will consider the following group of hypotheses about the regularity of the functional derivative $U$ (\textbf{RU}) in a neighborhood of $\mu$:
	
	\debutRU
	\item \label{functional_derivative2} there exists $r>0$ such that $U$ admits a linear functional derivative on the ball $B(\mu,r)$ centered at $\mu$ with radius $r$ for the metric $W_\ell$
	\item \label{growth2} $\exists C < \infty, \forall (\tilde{\mu},x) \in B(\mu,r) \times \mathbb{R}^d, \left| \dfrac{\delta U}{\delta m}(\tilde{\mu},x)\right| \leq C \left( 1 + |x|^{\frac{\ell}{2}}\right) $
	\item\label{derivative_conv2} $ \sup_{x\in \mathbb{R}^d} \dfrac{|\frac{\delta U}{\delta m}(\tilde{\mu},x)-\frac{\delta U}{\delta m}(\mu,x)|}{1+|x|^\frac{\ell}{2}} $ converges to $0$ when $W_\ell(\tilde{\mu},\mu)$ goes to $0$
	\item \label{continuity} $x\mapsto \frac{\delta U}{\delta m}\left(\mu,x \right) $ is continuous $\mu$-almost everywhere
	\item \label{growth_diff2} $\exists \alpha\in (\frac{1}{2},1], \exists C < \infty, \forall \mu_1,\mu_2\in B(\mu,r), \forall x\in \mathbb{R}^d$
	$$\left| \dfrac{\delta U}{\delta m}(\mu_2,x)-\dfrac{\delta U}{\delta m}(\mu_1,x)\right| \leq C\left( (1+|x|^\ell)\left\| \mu_2-\mu_1\right\|_0^\alpha +(1+|x|^{\ell (1-\alpha)})\left( \int_{\mathbb{R}^d}|y|^\ell |\mu_2-\mu_1|(dy) \right)^\alpha \right).  $$
	\finRU

	Moreover we will consider the following assumption about the tails of the random vectors $(X_i)_{i\geq1}$:
\begin{itemize}
		\item[\textbf{TX}]\label{srci}there exists $\beta \in (0,1)$ such that $\mathlarger{\sum}_{i=1}^{\infty}\dfrac{1}{i}\mathbb{E}\left(\left( |X_i|^\ell-i^\beta\right) 1_{\left\lbrace|X_i|^\ell> i^\beta \right\rbrace } \right)  < \infty. $
\end{itemize}
			
	Let us observe that it coincides with condition $(ii)$ of the Strongly Residually Cesaro $\beta-$Integrability with the choice of $f(x)= x^\ell$.
	We are now ready to state respectively the Strong Law of Large Numbers and the Central Limit Theorem.
	
	\begin{teo}[LLN for independent non-equidistributed
		r.v.]\label{SLLN}
		Let $\ell \geq 0$ and let $X_i,\, i\geq 1$ be a sequence of independent random variables on $\mathbb{R}^d$ with law $\nu_i \in \mathcal{P}_{\ell}(\mathbb{R}^d)$ and let us define 
		$$\mu_N :=\frac{1}{N}\sum_{i=1}^{N} \delta_{X_i}$$
		and 
		$$\bar{\nu}_N:= \mathbb{E}\left(  \mu_N\right)  = \frac{1}{N}\sum_{i=1}^{N} \nu_i.$$
		Let us assume \textbf{TX} and the existence of $\mu \in \mathcal{P}_\ell(\mathbb{R}^d)$ such that $\lim_{N\rightarrow \infty}W_\ell(\bar{\nu}_N,\mu)=0.$ Then 
		$$W_\ell\left(\mu_N, \mu\right) \underset{N}{\longrightarrow} 0 \quad a.s.$$	
	\end{teo}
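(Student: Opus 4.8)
The plan is to exploit the two characterizations of convergence in $W_\ell$ recalled above: for $\ell>0$, a deterministic sequence $(m_N)_{N\geq1}$ in $\mathcal{P}_\ell(\mathbb{R}^d)$ converges to $m\in\mathcal{P}_\ell(\mathbb{R}^d)$ in $W_\ell$ if and only if $m_N\rightharpoonup m$ and $\int_{\mathbb{R}^d}|x|^\ell m_N(dx)\to\int_{\mathbb{R}^d}|x|^\ell m(dx)$, while for $\ell=0$ the metric $W_0$ simply metrizes weak convergence. Since $\frac1N\sum_{i=1}^N|X_i|^\ell<\infty$ almost surely, $\mu_N\in\mathcal{P}_\ell(\mathbb{R}^d)$ a.s., and it suffices to build a single almost-sure event on which both (a) $\mu_N\rightharpoonup\mu$ and (b) $\frac1N\sum_{i=1}^N|X_i|^\ell\to\int_{\mathbb{R}^d}|x|^\ell\mu(dx)$ hold (for $\ell=0$, (b) is automatic and (a) alone gives the claim).

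For (b) I would apply the strong law \eqref{lln} of Chandra--Goswami to the pairwise independent variables $(|X_i|^\ell)_{i\geq1}$, i.e.\ with $f(x)=|x|^\ell$. Its hypotheses hold: condition $(i)$ of SRCI($\beta$) is satisfied because $\frac1N\sum_{i=1}^N\mathbb{E}(|X_i|^\ell)=\int_{\mathbb{R}^d}|x|^\ell\bar\nu_N(dx)$ converges, by $W_\ell(\bar\nu_N,\mu)\to0$, to the finite limit $\int_{\mathbb{R}^d}|x|^\ell\mu(dx)$ and is therefore bounded in $N$; condition $(ii)$ is precisely assumption \textbf{TX}. Hence $\frac1N\sum_{i=1}^N(|X_i|^\ell-\mathbb{E}(|X_i|^\ell))\to0$ a.s., and adding the deterministic convergence of $\frac1N\sum_{i=1}^N\mathbb{E}(|X_i|^\ell)$ yields (b).

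For (a) I would fix once and for all a countable family $(\phi_k)_{k\geq1}$ of bounded continuous functions on $\mathbb{R}^d$ that is convergence-determining for the weak topology on $\mathcal{P}(\mathbb{R}^d)$ (such a family exists since that topology is metrizable and separable), and prove $\frac1N\sum_{i=1}^N\phi_k(X_i)\to\int_{\mathbb{R}^d}\phi_k\,d\mu$ a.s.\ for each fixed $k$; intersecting these countably many probability-one events then gives $\mu_N\rightharpoonup\mu$ a.s. For fixed $k$ the variables $\phi_k(X_i)$ are independent and bounded, hence trivially satisfy SRCI($\beta$) for any $\beta\in(0,1)$ (in $(i)$ the average is bounded by $\|\phi_k\|_\infty$, and in $(ii)$ the indicator vanishes as soon as $i^\beta>\|\phi_k\|_\infty$), so \eqref{lln} gives $\frac1N\sum_{i=1}^N(\phi_k(X_i)-\mathbb{E}(\phi_k(X_i)))\to0$ a.s.; since $W_\ell(\bar\nu_N,\mu)\to0$ entails $\bar\nu_N\rightharpoonup\mu$, the deterministic part $\frac1N\sum_{i=1}^N\mathbb{E}(\phi_k(X_i))=\int_{\mathbb{R}^d}\phi_k\,d\bar\nu_N$ converges to $\int_{\mathbb{R}^d}\phi_k\,d\mu$, proving (a). On the intersection of the almost-sure events from (a) and (b), the characterization of $W_\ell$-convergence then yields $W_\ell(\mu_N,\mu)\to0$. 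I expect the only genuinely delicate point to be the reduction of the $\omega$-wise weak convergence to a countable convergence-determining class, so that weak convergence holds almost surely simultaneously for \emph{all} bounded continuous test functions; the rest is bookkeeping to check that the quoted strong law \eqref{lln} applies in each instance.
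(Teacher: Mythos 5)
Your proposal is correct, and its overall skeleton is the same as the paper's: split $W_\ell$-convergence into (a) almost sure weak convergence $\mu_N\rightharpoonup\mu$ and (b) almost sure convergence of the $\ell$-th moments, the case $\ell=0$ needing only (a). Your treatment of (b) is exactly the paper's argument: apply the Chandra--Goswami strong law \eqref{lln} to $(|X_i|^\ell)_{i\geq1}$, with condition (i) of SRCI($\beta$) supplied by the convergence $\int|x|^\ell\bar\nu_N(dx)\to\int|x|^\ell\mu(dx)$ and condition (ii) being \textbf{TX}, then add the deterministic convergence of $\int|x|^\ell\bar\nu_N(dx)$. Where you diverge is step (a): the paper simply observes $\bar\nu_N\rightharpoonup\mu$ and invokes Wellner's Glivenko--Cantelli theorem for empirical measures of independent, non-identically distributed random variables to conclude $\mu_N\rightharpoonup\mu$ a.s., whereas you reprove this special case directly, reducing weak convergence to a countable convergence-determining family of bounded continuous test functions, applying the strong law to the bounded independent variables $\phi_k(X_i)$ for each fixed $k$, and intersecting the countably many full-measure events. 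This is legitimate: a countable family does exist (e.g.\ a countable sup-norm-dense subset of $C_c(\mathbb{R}^d)$ suffices, since the prospective limit $\mu$ is a probability measure, so testing against $C_c$ already characterizes weak convergence of probability measures), and in fact the paper uses precisely this countable-class device, with $C_c(\mathbb{R}^d)$ separability, in its proof of the Markov-chain LLN (Theorem \ref{llnmarkov}). What your route buys is a self-contained proof that avoids the external reference to Wellner; what the citation buys is brevity and a more general statement. The only point to state carefully in a final write-up is the convergence-determining property of your countable family (your appeal to metrizability and separability of the weak topology should be replaced by the explicit $C_c$-density argument, noting that no mass can escape because the limit is a probability measure); with that made precise, your proof is complete.
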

	\begin{proof} Thanks to the existence of $\mu \in \mathcal{P}_\ell(\mathbb{R}^d)$ such that $\lim_{N\rightarrow \infty}W_\ell(\bar{\nu}_N,\mu)=0$, by the characterization  of the Wasserstein convergence, one has $ \bar{\nu}_N \rightharpoonup \mu$ and so by Wellner paper \cite{Wellner}  $\mu_N \rightharpoonup \mu$ a.s..\\
		If $\ell=0$ the proof is completed, let us therefore suppose that $\ell>0$ and let us check the convergence of the $\ell$-th moment.
		Since the $\left| X_i\right|^\ell$'s are independent,  Assumption \textbf{TX} holds and  $\sup_{N\geq 1}\frac{1}{N}\sum_{i=1}^{N}\mathbb{E}\left(\left| X_i\right|^\ell \right)=\sup_{N\geq 1} \int_{\mathbb{R}^d}|x|^\ell\bar{\nu}_N(dx) <\infty$ (using again the characterisation of the Wasserstein convergence), we can apply (\ref{lln}) and obtain that	
		
		$$\int_{\mathbb{R}^d} | x |^\ell \mu_N\left(dx\right) -  \int_{\mathbb{R}^d} | x |^\ell \bar{\nu}_N\left(dx\right)=\dfrac{| X_1|^\ell+\cdots+| X_N|^\ell - \left( \mathbb{E}\left( | X_1|^\ell\right) +\cdots+ \mathbb{E}\left( | X_N|^\ell\right)  \right) }{N} \underset{N\rightarrow \infty}{\longrightarrow} 0 \qquad a.s. $$  
		
		Therefore
		
		\begin{align*}
			\lim_{N\rightarrow \infty}\int_{\mathbb{R}^d}& | x |^\ell \mu_N\left(dx\right) -  \int_{\mathbb{R}^d} | x |^\ell \mu\left(dx\right)\\
			&=\lim_{N\rightarrow \infty} \int_{\mathbb{R}^d} | x |^\ell \mu_N\left(dx\right) -  \int_{\mathbb{R}^d} | x |^\ell \bar{\nu}_N\left(dx\right)+  \int_{\mathbb{R}^d} | x |^\ell \bar{\nu}_N\left(dx\right) - \int_{\mathbb{R}^d} | x |^\ell \mu\left(dx\right) = 0 \qquad a.s.
		\end{align*}

	\end{proof} 
	\begin{teo}[CLT for independent non-equidistributed
		r.v.]\label{independent_case}
		Using the same notations of Theorem \ref{SLLN}, let us assume that

		\begin{enumerate}		
			\item \label{wess.conv2} $W_\ell\left(\frac{1}{N}\sum_{i=1}^N\nu_i(dx)\nu_i(dy),\eta(dx,dy)\right)  \underset{N\rightarrow \infty}{\longrightarrow} 0$ for some measure $\eta \in \mathcal{P}_\ell (\mathbb{R}^d \times \mathbb{R}^d)$ with $\mu(\cdot) = \eta(\mathbb{R}^d,\cdot)=\eta(\cdot,\mathbb{R}^d)$
			
			\item\label{l_convergence}
			$\|\sqrt{N}\left(\bar{\nu}_N-\mu \right) - \sigma\|_\ell \underset{N\rightarrow \infty}{\longrightarrow} 0$
			for some measure $\sigma$ in $\mathcal{P}_\ell(\mathbb{R}^d). $
		\end{enumerate}
		
		If moreover \textbf{RU\ref{functional_derivative2}-\ref{growth_diff2}} and \textbf{TX} hold then
		
		$$\sqrt{N}\left(U\left(\mu_N \right)- U\left(\mu \right)  \right) \overset{d}{\Longrightarrow} \mathcal{N}\left(\int_{\mathbb{R}^d}\frac{\delta U}{\delta m}\left(\mu,x\right)\sigma(dx),\int_{\mathbb{R}^d}  \left( \frac{\delta U}{\delta m}\left(\mu,x \right)\right) ^2\mu(dx)-\int_{\mathbb{R}^d \times \mathbb{R}^d} \frac{\delta U}{\delta m} (\mu,x)\frac{\delta U}{\delta m} (\mu,y)\eta(dx,dy)  \right). $$
	\end{teo}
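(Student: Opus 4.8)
The plan is to decompose
$$\sqrt{N}\bigl(U(\mu_N)-U(\mu)\bigr)=\underbrace{\sqrt{N}\bigl(U(\bar\nu_N)-U(\mu)\bigr)}_{=:A_N}+\underbrace{\sqrt{N}\bigl(U(\mu_N)-U(\bar\nu_N)\bigr)}_{=:B_N},$$
to identify the deterministic limit of $A_N$ by a first order Taylor expansion of $U$, to treat $B_N$ by a telescoping martingale decomposition in the spirit of \eqref{linearization} followed by the central limit theorem for arrays of martingale increments, and to recombine the two pieces by Slutsky's theorem. For $A_N$: since $W_\ell(\bar\nu_N,\mu)\to0$, for $N$ large $\bar\nu_N\in B(\mu,r)$ and, by joint convexity of $W_\ell^{\ell\vee1}$ along mixtures, the whole segment $\bigl((1-s)\mu+s\bar\nu_N\bigr)_{s\in[0,1]}$ lies in $B(\mu,r)$ with $\sup_{s\in[0,1]}W_\ell((1-s)\mu+s\bar\nu_N,\mu)\le W_\ell(\bar\nu_N,\mu)\to0$; Lemma~\ref{linearderivative} then writes $A_N=\int_0^1\!\int_{\mathbb{R}^d}\frac{\delta U}{\delta m}\bigl((1-s)\mu+s\bar\nu_N,y\bigr)\sqrt N(\bar\nu_N-\mu)(dy)\,ds$, and upon replacing the integrand by $\frac{\delta U}{\delta m}(\mu,y)$ the main part converges to $\int_{\mathbb{R}^d}\frac{\delta U}{\delta m}(\mu,y)\sigma(dy)$ by assumption~\ref{l_convergence} and the bound $|\frac{\delta U}{\delta m}(\mu,\cdot)|\le C(1+|\cdot|^{\ell/2})\le C(1+|\cdot|^{\ell})$, while \textbf{RU\ref{derivative_conv2}} (uniformly in $s$ by the previous inequality) together with $\sup_N\|\sqrt N(\bar\nu_N-\mu)\|_{\ell/2}\le\sup_N\|\sqrt N(\bar\nu_N-\mu)\|_\ell<\infty$ kills the error, so $A_N\to\int_{\mathbb{R}^d}\frac{\delta U}{\delta m}(\mu,y)\sigma(dy)$.

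For $B_N$, let $\mathcal F_{i-1}:=\sigma(X_1,\dots,X_{i-1})$, $\hat\mu_k:=\frac1k\sum_{j=1}^k\delta_{X_j}$, $\tilde\mu_N^i:=\frac1N\sum_{j\le i}\delta_{X_j}+\frac1N\sum_{j>i}\nu_j$ (so $\tilde\mu_N^0=\bar\nu_N$, $\tilde\mu_N^N=\mu_N$ and $\tilde\mu_N^i-\tilde\mu_N^{i-1}=\frac1N(\delta_{X_i}-\nu_i)$), and let $\bar{\tilde\mu}_N^{i-1}:=\frac1N\sum_{j<i}\delta_{X_j}+\frac1N\sum_{j\ge i}\nu_j$ be the $\mathcal F_{i-1}$-measurable measure keeping the $i$-th slot at $\nu_i$. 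Telescoping $B_N$ and applying Lemma~\ref{linearderivative} on each increment yields $B_N=S_N+R_N$ with, the $s$-integral being trivial once the reference measure no longer depends on $X_i$,
$$S_N:=\frac1{\sqrt N}\sum_{i=1}^N\Bigl(\frac{\delta U}{\delta m}(\bar{\tilde\mu}_N^{i-1},X_i)-\int_{\mathbb{R}^d}\frac{\delta U}{\delta m}(\bar{\tilde\mu}_N^{i-1},y)\,\nu_i(dy)\Bigr)$$
a sum of $(\mathcal F_{i-1})$-martingale increments, and $R_N$ gathering the differences $\frac{\delta U}{\delta m}(\tilde\mu_N^{i-1,s},y)-\frac{\delta U}{\delta m}(\bar{\tilde\mu}_N^{i-1},y)$ produced by replacing $\tilde\mu_N^{i-1,s}:=(1-s)\tilde\mu_N^{i-1}+s\tilde\mu_N^i$ by $\bar{\tilde\mu}_N^{i-1}$. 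As $\tilde\mu_N^{i-1,s}-\bar{\tilde\mu}_N^{i-1}=\frac sN(\delta_{X_i}-\nu_i)$, I would bound these differences by \textbf{RU\ref{growth_diff2}}, use the Jensen inequalities $\mathbb{E}|X_i|^{\ell\alpha}\le(\mathbb{E}|X_i|^\ell)^\alpha$ and $\mathbb{E}|X_i|^{\ell(1-\alpha)}\le(\mathbb{E}|X_i|^\ell)^{1-\alpha}$, integrate against $|\delta_{X_i}-\nu_i|$, and reach $\mathbb{E}|R_N|\le C\,N^{1/2-\alpha}\,\sup_N\tfrac1N\sum_{i\le N}(1+\mathbb{E}|X_i|^\ell)$, which vanishes since $\alpha>\tfrac12$ and $\sup_N\int_{\mathbb{R}^d}|x|^\ell\bar\nu_N(dx)<\infty$; a localization onto the event that all measures involved lie in $B(\mu,r)$ (of probability tending to $1$ by the estimate below) makes this rigorous, so $R_N\to0$ in probability.

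The heart of the matter --- and the step I expect to be the main obstacle --- is the martingale CLT for $S_N=\sum_{i=1}^N M_{N,i}$, $M_{N,i}:=N^{-1/2}\bigl(\frac{\delta U}{\delta m}(\bar{\tilde\mu}_N^{i-1},X_i)-\int\frac{\delta U}{\delta m}(\bar{\tilde\mu}_N^{i-1},y)\nu_i(dy)\bigr)$, and it rests on the \emph{uniform} estimate $\sup_{1\le i\le N}W_\ell(\bar{\tilde\mu}_N^{i-1},\mu)\to0$ a.s. To prove the latter I would write $\bar{\tilde\mu}_N^{i-1}=\frac{i-1}{N}\hat\mu_{i-1}+\frac{N-i+1}{N}\bar\nu_{i,N}$ with $\bar\nu_{i,N}:=\frac1{N-i+1}\sum_{j=i}^N\nu_j$, use joint convexity of $W_\ell^{\ell\vee1}$ along mixtures to bound $W_\ell^{\ell\vee1}(\bar{\tilde\mu}_N^{i-1},\mu)$ by $\frac{i-1}{N}W_\ell^{\ell\vee1}(\hat\mu_{i-1},\mu)+\frac{N-i+1}{N}W_\ell^{\ell\vee1}(\bar\nu_{i,N},\mu)$, control the first term uniformly in $i$ by Theorem~\ref{SLLN} (which gives $W_\ell(\hat\mu_k,\mu)\to0$ a.s.), and for the second exploit the identity $(N-i+1)(\bar\nu_{i,N}-\mu)=N(\bar\nu_N-\mu)-(i-1)(\bar\nu_{i-1}-\mu)$ together with $\sup_k\|\sqrt k(\bar\nu_k-\mu)\|_\ell<\infty$ (from assumption~\ref{l_convergence}) and $W_\ell\le C\|\cdot\|_\ell^{1/(\ell\vee1)}$ on $\mathcal P(\mathbb{R}^d)$ to obtain $\frac{N-i+1}{N}W_\ell^{\ell\vee1}(\bar\nu_{i,N},\mu)=O(N^{-1/2})$ uniformly in $i$; the same reasoning also bounds $\sup_{i,s}W_\ell(\tilde\mu_N^{i-1,s},\mu)$, which closes the localization used above.

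Granting this, \textbf{RU\ref{functional_derivative2}--\ref{growth2}} make the $M_{N,i}$ well-defined, centered and square-integrable given $\mathcal F_{i-1}$, and I would check the two hypotheses of the array martingale CLT. (i) The predictable bracket $\sum_{i=1}^N\mathbb{E}[M_{N,i}^2\mid\mathcal F_{i-1}]=\frac1N\sum_{i=1}^N\mathrm{Var}_{\nu_i}\bigl(\tfrac{\delta U}{\delta m}(\bar{\tilde\mu}_N^{i-1},\cdot)\bigr)$ should converge in probability to the announced variance: one replaces $\bar{\tilde\mu}_N^{i-1}$ by $\mu$ using \textbf{RU\ref{derivative_conv2}}, the uniform estimate and a Cauchy--Schwarz control of $\int\tfrac{\delta U}{\delta m}(\bar{\tilde\mu}_N^{i-1},\cdot)\,d\nu_i$, and then identifies the limit from $\frac1N\sum_i\int(\tfrac{\delta U}{\delta m}(\mu,\cdot))^2d\nu_i=\int(\tfrac{\delta U}{\delta m}(\mu,\cdot))^2d\bar\nu_N\to\int(\tfrac{\delta U}{\delta m}(\mu,x))^2\mu(dx)$ and $\frac1N\sum_i\bigl(\int\tfrac{\delta U}{\delta m}(\mu,\cdot)d\nu_i\bigr)^2=\bigl(\tfrac1N\sum_i\nu_i\otimes\nu_i\bigr)\bigl(\tfrac{\delta U}{\delta m}(\mu,\cdot)\otimes\tfrac{\delta U}{\delta m}(\mu,\cdot)\bigr)\to\int\tfrac{\delta U}{\delta m}(\mu,x)\tfrac{\delta U}{\delta m}(\mu,y)\eta(dx,dy)$, invoking \eqref{carat_wass}, assumptions~\ref{wess.conv2}--\ref{l_convergence} and the $\mu$-a.e.\ continuity \textbf{RU\ref{continuity}} (which makes $\tfrac{\delta U}{\delta m}(\mu,\cdot)^2$ continuous $\mu$-a.e.\ and $\tfrac{\delta U}{\delta m}(\mu,\cdot)\otimes\tfrac{\delta U}{\delta m}(\mu,\cdot)$ continuous $\eta$-a.e.). (ii) The Lindeberg condition $\sum_{i=1}^N\mathbb{E}[M_{N,i}^2\,1_{\{|M_{N,i}|>\epsilon\}}]\to0$ follows from $|M_{N,i}|\le CN^{-1/2}\bigl(1+|X_i|^{\ell/2}+\sqrt{1+\mathbb{E}|X_i|^\ell}\bigr)$ (by \textbf{RU\ref{growth2}} and Cauchy--Schwarz) and the Cesaro uniform integrability $\frac1N\sum_{i\le N}\mathbb{E}[|X_i|^\ell1_{\{|X_i|^\ell>\delta N\}}]\le\int_{\mathbb{R}^d}|x|^\ell1_{\{|x|^\ell>\delta N\}}\mu(dx)+O(N^{-1/2})\to0$ for every $\delta>0$ --- obtained by testing $\|\sqrt N(\bar\nu_N-\mu)\|_\ell$ against $f=|\cdot|^\ell1_{\{|\cdot|^\ell>\delta N\}}$, which moreover forces $\mathbb{E}|X_i|^\ell=o(\sqrt i)$. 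The array martingale CLT then gives $S_N\overset{d}{\Longrightarrow}\mathcal N(0,v)$ with $v$ the stated (nonnegative) variance, and Slutsky's theorem combines $A_N$, $S_N$ and $R_N$ into the claimed convergence.
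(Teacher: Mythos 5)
Your route is essentially the paper's: the same split into $A_N+B_N$, the same linearization of $B_N$ along the interpolating measures (your $\tilde\mu_N^{i-1,s}$ are exactly the paper's $\mu_N^{i,s}$), the same martingale array with the same bracket and Lindeberg checks, and Slutsky at the end. Your local variants are all workable: using \textbf{RU\ref{derivative_conv2}} rather than \textbf{RU\ref{growth_diff2}} to kill the error in $A_N$ (legitimate, since $\sup_{s}W_\ell((1-s)\mu+s\bar\nu_N,\mu)\le W_\ell(\bar\nu_N,\mu)$ and $\int(1+|x|^{\ell/2})|\sqrt N(\bar\nu_N-\mu)|(dx)$ stays bounded by Assumption \ref{l_convergence}); obtaining the uniform estimate $\max_{i}W_\ell(\tilde\mu_N^{i-1},\mu)\to0$ a.s. from the identity $(N-i+1)(\bar\nu_{i,N}-\mu)=N(\bar\nu_N-\mu)-(i-1)(\bar\nu_{i-1}-\mu)$ plus Theorem \ref{SLLN} (the paper instead couples with $\bar\nu_N$, but both give the claim); and deriving the truncated-moment condition $\frac1N\sum_{i\le N}\mathbb{E}\bigl(|X_i|^\ell 1_{\{|X_i|^\ell>\delta N\}}\bigr)\to0$ from Assumption \ref{l_convergence} instead of from \textbf{TX} (\textbf{TX} is still needed for Theorem \ref{SLLN}).

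The genuine gap is in the definition of the martingale array itself. \textbf{RU\ref{functional_derivative2}} only provides $\frac{\delta U}{\delta m}(m,\cdot)$ for $m\in B(\mu,r)$, and for each \emph{fixed} $N$ the random measures $\bar{\tilde\mu}_N^{i-1}$ exit $B(\mu,r)$ with positive probability (when $\ell>0$ and some $\nu_j$ is unbounded, a single large $X_j$ with weight $1/N$ already pushes $W_\ell$ beyond $r$); your a.s. uniform estimate only guarantees containment for $N\ge N^*(\omega)$ with $N^*$ random, which suffices for the remainder $R_N$ but not to define $M_{N,i}$ for every $N$. Moreover the localization you invoke cannot simply be inserted into $S_N$: the event that all the interpolating measures lie in $B(\mu,r)$ depends on $X_1,\dots,X_N$, hence is not $\mathcal F_{i-1}$-measurable, and multiplying the increments by its indicator destroys the exact conditional centering $\mathbb{E}\left(M_{N,i}\mid\mathcal F_{i-1}\right)=0$ required by the martingale CLT (Corollary 3.1 of \cite{HallHeyde}). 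The paper resolves this with an \emph{adapted} localization: the stopping time $I_N=\min\{1\le i\le N:\exists s\in[0,1],\ W_\ell(\mu_N^{i,s},\mu)\ge r\}$ and the frozen measures $\mu_N^{i\wedge I_N,0}$, which are $\mathcal F_{i-1}$-measurable and remain in $B(\mu,r)$, so the increments are well defined and exactly centered, the bracket and Lindeberg computations go through, and $\mathbb{P}(I_N\le N)\to0$ transfers the limit back to $\sqrt N\left(U(\mu_N)-U(\bar\nu_N)\right)$. You need this device (or an explicit measurable extension of $\frac{\delta U}{\delta m}$ off the ball still satisfying \textbf{RU\ref{growth2}}) to make your fourth step rigorous; the rest of your argument then matches the paper's proof.
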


	\begin{remark} \label{conv_nuntomu}
		Since the marginals of $\frac{1}{N}\sum_{i=1}^N\nu_i(dx)\nu_i(dy)$ and $\eta(dx,dy)$ are respectively $\bar{\nu}_N$ and $\mu$, one has $$W_\ell\left(\bar{\nu}_N,\mu\right) \leq W_\ell\left(\frac{1}{N}\sum_{i=1}^N\nu_i(dx)\nu_i(dy),\eta(dx,dy)\right)$$

		and so Assumption \ref{wess.conv2} implies that $W_\ell(\bar{\nu}_N,\mu)\underset{N\rightarrow \infty}{\longrightarrow}0$.
	\end{remark}
	\begin{remark}
		According to the theorem, the asymptotic variance is given by
		$$ \int_{\mathbb{R}^d}\left(\frac{\delta U}{\delta m} (\mu,x)\right) ^2\mu(dx)-\int_{\mathbb{R}^{2d}} \frac{\delta U}{\delta m} (\mu,x)\frac{\delta U}{\delta m} (\mu,y)\eta(dx,dy).$$
		By Jensen's inequality
		$$\frac{1}{N}\sum_{i=1}^N \int_{\mathbb{R}^{2d}} \frac{\delta U}{\delta m} (\mu,x) \frac{\delta U}{\delta m} (\mu,y)\nu_i(dx)\nu_i(dy) = \frac{1}{N}\sum_{i=1}^N \left( \int_{\mathbb{R}^d}\frac{\delta U}{\delta m} (\mu,x)\nu_i(dx)\right) ^2\ge \left( \frac{1}{N}\sum_{i=1}^N \int_{\mathbb{R}^d}  \frac{\delta U}{\delta m}(\mu,x)\nu_i(dx)\right)^2.$$
		Thanks to Hypothesis \ref{wess.conv2}, Hypothesis \textbf{RU\ref{growth2}} and Hypothesis \textbf{RU\ref{continuity}}, by taking the limit over $N\rightarrow \infty$, one deduces that
		$$
		\int_{\mathbb{R}^{2d}} \frac{\delta U}{\delta m} (\mu,x)\frac{\delta U}{\delta m} (\mu,y)\eta(dx,dy)\ge \left( \int_{\mathbb{R}^d} \frac{\delta U}{\delta m} (\mu,x)\mu(dx)\right) ^2.$$
		
		Therefore the following ``variance'' inequality holds
		$$ \int_{\mathbb{R}^d} \left( \frac{\delta U}{\delta m} (\mu,x)\right) ^2\mu(dx)-\int_{\mathbb{R}^{2d}} \frac{\delta U}{\delta m} (\mu,x)\frac{\delta U}{\delta m} (\mu,y)\eta(dx,dy)\leq \int_{\mathbb{R}^d} \left( \frac{\delta U}{\delta m} (\mu,x)\right) ^2\mu(dx)- \left( \int_{\mathbb{R}^d} \frac{\delta U}{\delta m} (\mu,x) \mu(dx)\right)^2. $$
	\end{remark}
In what follows, we provide an example where Hypotheses \ref{wess.conv2} and \ref{l_convergence} are verified.
\begin{ex}
	Let $\theta_0, \theta_1, \cdots, \theta_{m-1} \in \mathcal{P}_\ell(\mathbb{R}^d)$ for $m\in \mathbb{N}^*$ and  define  $\nu_{i}(dx):=\theta_{(i-1)\,mod\,m}(dx)$ for $i\geq 1$. We are now going to verify that Hypothesis \ref{wess.conv2} and \ref{l_convergence} are satisfied with $\eta(dx,dy)=\frac{1}{m}\sum_{i=0}^{m-1}\theta_i(dx)\theta_i(dy)$ and $\sigma=0$.\\
	If we prove that 
	\begin{equation}\label{strongproof}
		\lim_{N\to\infty}\sqrt{N}\left\|\frac{1}{N}\sum_{i=1}^{N}\theta_{(i-1)\,mod\,m}(dx)\theta_{(i-1)\,mod\,m}(dy)- \frac{1}{m}\sum_{i=0}^{m-1}\theta_i(dx)\theta_i(dy)\right\|_\ell  =0,
	\end{equation}
	then using that the convergence with respect to  $\left\|\cdot\right\|_\ell$ implies the convergence with respect to $W_\ell$, Hypothesis \ref{wess.conv2} will follow. Moreover since the marginals of $\frac{1}{N}\sum_{i=1}^{N}\theta_{(i-1)\,mod\,m}(dx)\theta_{(i-1)\,mod\,m}(dy)$ and $\frac{1}{m}\sum_{i=0}^{m-1}\theta_i(dx)\theta_i(dy)$ are respectively $\frac{1}{N}\sum_{i=1}^{N}\theta_{(i-1)\,mod\,m}$ and $\frac{1}{m}\sum_{i=0}^{m-1}\theta_i$, Hypothesis \ref{l_convergence} will follow too. Let us therefore prove (\ref{strongproof}).
	For each $N>0$, there exist $k_N\in \mathbb{N}$ and $r_N\in \mathbb{N}$ with $0\leq r_N < m$ such that $N=k_Nm+r_N$. Let $f$ be a function on $\mathbb{R}^{2d}$ such that $|f(x,y)|\leq 1+|x|^\ell+|y|^\ell$. One has
	\begin{align*}
		&\sqrt{N}\left|   \int_{\mathbb{R}^{2d}}f(x,y)\frac{1}{N}\sum_{i=1}^{N}\theta_{i-1\,mod\,m}(dx)\theta_{i-1\,mod\,m}(dy)-\frac{1}{m}\sum_{i=0}^{m-1}\int_{\mathbb{R}^{2d}}f(x,y)\theta_i(dx)\theta_i(dy)\right| \\ 
		&\phantom{==}=\sqrt{N} \left| \left( \frac{k_N}{N}-\frac{1}{m}\right) \sum_{i=0}^{m-1}\int_{\mathbb{R}^{2d}}f(x,y) \theta_{i}(dx)\theta_{i}(dy)+\frac{1}{N}\sum_{i=0}^{r_N-1}\int_{\mathbb{R}^{2d}}f(x,y) \theta_i(dx)\theta_i(dy)\right| \\
		&\phantom{==}=\left|  -\frac{r_N}{\sqrt{N}m}\sum_{i=0}^{m-1}\int_{\mathbb{R}^{2d}}f(x,y) \theta_i(dx)\theta_i(dy)+\frac{1}{\sqrt{N}}\sum_{i=0}^{r_N-1}\int_{\mathbb{R}^{2d}}f(x,y) \theta_i(dx)\theta_i(dy) \right| .
	\end{align*}
	Taking the superior limit over $N\rightarrow \infty$, the right hand-side converges to $0$ which completes the proof. 
\end{ex}
In the following example Hypothesis \ref{l_convergence} is verified with $\sigma=0$.
\begin{ex}
	Let $\mu\in \mathcal{P}_\ell(\mathbb{R}^d)$ and $\nu_{i}\in \mathcal{P}_\ell(\mathbb{R}^d)$  such that $\left\|\nu_{i} - \mu \right\|_\ell\leq \dfrac{c}{i^\alpha}$ for $i\geq 1$ with $c <\infty$ and $\alpha>\frac{1}{2}$. Then 
	\begin{align*}
		&\sqrt{N}\sup_{ f:|f(x)|\leq 1+|x|^\ell}\left|\dfrac{1}{N}\sum_{i=1}^{N}\int_{\mathbb{R}^d}f(x)\nu_{i}(dx)- \int_{\mathbb{R}^d}f(x)\mu(dx)\right|\\
		&\phantom{==} \leq \dfrac{1}{\sqrt{N}} \sum_{i=1}^{N}\left\|\nu_{i} - \mu \right\|_\ell\leq\dfrac{c}{\sqrt{N}}\sum_{i=1}^{N}\dfrac{1}{i^\alpha} = \begin{cases}
			O\left( \frac{1}{\sqrt{N}}\right)  \, &\mbox{if}\,\,\alpha>1\\
			O\left( \frac{\ln N}{\sqrt{N}}\right)  \,&\mbox{if}\,\, \alpha=1\\
			O\left( N^{\frac{1}{2}-\alpha}\right)  \, &\mbox{if}\,\,\alpha\in(0,1)\\
		\end{cases}	
	\end{align*}
	
	Therefore, since $\alpha >\frac{1}{2}$, taking the superior limit over $N\rightarrow \infty$ we have  $\sqrt{N}\|\bar{\nu}_N-\mu\|_\ell \underset{N\rightarrow \infty}{\longrightarrow}0.$ 
\end{ex}
	\subsection{Markov Chains} 
	\subsubsection{Markov Chains and the Poisson equation}
	We are now going to state the Central Limit Theorem for Markov chains but before doing it, let us recall some facts useful to it. See \cite{notesbenj} for more details. \\
	Let us consider $X_i,\, i\geq 1$ a Markov chain with initial distribution $\nu_1$ and transition kernel $P$ on $\left(\mathbb{R}^d, \mathcal{B}\left(\mathbb{R}^d\right) \right) $.\\
	We say that $P$ verifies the \textbf{Lyapunov Condition} if:
	\debutL
	\item \label{D1} $\exists V:\mathbb{R}^d \rightarrow \mathbb{R}_+$ measurable, $\exists K \in \mathbb{R}_+$, $\exists \gamma\in\left(0,1 \right)$, $\forall x\in \mathbb{R}^d$, $PV\left(x\right)\leq \gamma V\left(x\right)+K$
	\item \label{D2} $\exists R>\dfrac{2K}{1-\gamma}$,  $\exists \rho \in (0,1] $, $\forall x, y \in \mathbb{R}^d$ such that $V(x)+V(y)\leq R$, $P(x,\cdot)\wedge P(y,\cdot)(\mathbb{R}^d)\geq \rho.$
	\finL
	
	We can introduce the following normed space
	
	$$\mathcal{V}_{V}:= \left\lbrace \phi:\mathbb{R}^d \rightarrow \mathbb{R} \,\,measurable : \sup_{x\in \mathbb{R}^d}\frac{\left|\phi(x) \right|}{1+ V(x)} <\infty  \right\rbrace$$
	
	equipped with the norm
	
	$$ \|\phi \|_{V,\beta} = \sup_{x\in \mathbb{R}^d}\frac{|\phi(x)|}{1+\beta V(x)} $$
	where $\beta>0$. 
	We can also associate the distance $d_{V,\beta}$ on $\mathcal{P}_V\left(\mathbb{R}^d \right):= \left\lbrace \theta \in \mathcal{P}\left(\mathbb{R}^d\right) : \theta(V)<\infty \right\rbrace $ defined by 
	$$d_{V,\beta}\left( \theta,\sigma\right) = \sup_{\phi:\|\phi \|_{V,\beta}\leq 1} |\theta\left(\phi\right)-\sigma\left(\phi\right)|.$$
	It can be proved that under the Lyapunov condition, the transition kernel $P$ admits a unique invariant probability measure $\mu$. Moreover   
	\begin{equation}\label{int_of_V}
		\mu(V)\leq \frac{K}{1-\gamma}
	\end{equation} 
	and
	\begin{equation}\label{stima_distanza}
		\forall \beta> 0, \forall \sigma \in \mathcal{P}\left(\mathbb{R}^d\right), \forall n\in \mathbb{N}, d_{V,\beta}\left(\sigma P^n,\mu \right)\leq \left( \chi\left(\rho,\beta,\gamma,K,R \right)\right)^n d_{V,\beta}\left(\sigma,\mu \right)
	\end{equation}
	where $\chi\left(\rho,\beta,\gamma,K,R \right) = \left( 1-\rho +\beta K\right)\vee \frac{2+\beta \gamma R+2 \beta K}{2+\beta R} \in \left(0,1 \right) $ if $\beta \in \left(0,\frac{\rho}{K} \right).$
	
	The proof of this result is given in \cite{Hairer} under a stronger condition on $P$: it verifies \textbf{L\ref{D1}} and there exists a constant $q \in \left(0,1 \right)$ and a probability measure $\zeta$ so that $\inf_{x\in C}P\left(x,\cdot \right)\geq q \zeta\left(\cdot \right) $ with $C=\left\lbrace x\in \mathbb{R}^d:V\left(x \right) \leq R \right\rbrace $ for some $R>\frac{2K}{1-\gamma}$. It remains valid in our context too.\\
	We are now ready to recall the Strong Law of Large Numbers under the Lyapunov condition, a result that will be largely used in what follows.
	
	\begin{teo}\label{large_numbers}
		Let us assume that the transition kernel $P$ satisfies the Lyapunov condition and let $\mu$ be its unique invariant probability measure. Then for each function $f:\mathbb{R}^d\rightarrow \mathbb{R}$ measurable and such that $\mu(|f|)<\infty$,
		
		$$\forall \nu_1 \in \mathcal{P}(\mathbb{R}^d)\quad \mathbb{P}_{\nu_1}\left(\frac{1}{N}\sum_{k=1}^{N}f(X_k)\underset{N\rightarrow \infty}{\rightarrow} \mu(f) \right)=1.$$
	\end{teo}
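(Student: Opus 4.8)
The plan is to reduce the statement to the ergodic behaviour quantified by \eqref{stima_distanza}, and to handle the dependence on the initial law by a standard splitting into a ``nice'' initial measure plus a martingale/coupling argument. First I would observe that, since the Lyapunov condition guarantees a unique invariant probability measure $\mu$ with $\mu(V)<\infty$ by \eqref{int_of_V}, it suffices to treat bounded measurable $f$: for general $f$ with $\mu(|f|)<\infty$ one truncates, writing $f = f\mathbf{1}_{\{|f|\le M\}} + f\mathbf{1}_{\{|f|>M\}}$, controls the tail via $\mu(|f|\mathbf{1}_{\{|f|>M\}})\to 0$ together with a bound on $\limsup_N \frac1N\sum_{k=1}^N |f|(X_k)\mathbf{1}_{\{|f(X_k)|>M\}}$ coming from $PV\le\gamma V+K$ (which makes $\frac1N\sum_k V(X_k)$ almost surely bounded), and lets $M\to\infty$. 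So the core is the SLLN for bounded $f$.

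For bounded $f$, the key step is to solve the Poisson equation $g - Pg = f - \mu(f)$ in $\mathcal V_V$: the contraction estimate \eqref{stima_distanza} shows that $\sum_{n\ge 0}(P^n f - \mu(f))$ converges in $\|\cdot\|_{V,\beta}$ for a suitable $\beta\in(0,\rho/K)$, and its sum $g$ is the desired solution, with $g\in\mathcal V_V$. Then one writes the telescoping decomposition
\begin{equation*}
	\frac1N\sum_{k=1}^N \big(f(X_k)-\mu(f)\big) = \frac1N\sum_{k=1}^N \big(g(X_k)-Pg(X_k)\big) = \frac1N M_N + \frac1N\big(Pg(X_0)-Pg(X_N)\big),
\end{equation*}
where $M_N = \sum_{k=1}^N\big(g(X_k)-Pg(X_{k-1})\big)$ is a martingale with respect to the natural filtration. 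The boundary term $\frac1N(Pg(X_0)-Pg(X_N))$ is handled by noting $|Pg(X_N)|\le C(1+V(X_N))$ and that $\frac{1}{N}V(X_N)\to 0$ a.s. (again from iterating $PV\le\gamma V+K$, which gives $\mathbb E_{\nu_1}[V(X_N)]$ bounded, hence $\sum_N \mathbb E[V(X_N)]/N^2<\infty$ and Borel--Cantelli).

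The martingale term is controlled by the $L^2$ strong law for martingales (Chow's theorem): the increments $D_k = g(X_k)-Pg(X_{k-1})$ satisfy $\mathbb E_{\nu_1}[D_k^2]\le C'\,\mathbb E_{\nu_1}[1+V(X_{k-1})]$, and since $\sup_k\mathbb E_{\nu_1}[V(X_{k-1})]<\infty$ this is bounded in $k$, so $\sum_k \mathbb E[D_k^2]/k^2<\infty$ and $\frac1N M_N\to 0$ a.s.; this is exactly where one needs $g\in\mathcal V_V$ together with the Lyapunov drift to get square-integrability uniformly in $k$. I expect the main obstacle to be precisely the interface between the two hypotheses: one must choose $\beta$ small enough that \eqref{stima_distanza} is a genuine contraction \emph{and} that $1+\beta V$ is $\mu$-integrable and controls the growth of $g$, while simultaneously checking that the initial measure $\nu_1\in\mathcal P(\mathbb R^d)$ — which is \emph{not} assumed to have finite $V$-moment — does not spoil the argument; this last point is resolved by running the whole argument from time $1$ onward, since $\mathbb E_{\nu_1}[V(X_k)] = \nu_1 P^{k-1}(V)\le \gamma^{k-1}\nu_1(V) + K/(1-\gamma)$ requires $\nu_1(V)<\infty$, so one instead conditions on $X_1$ and uses $\mathbb E[V(X_k)\mid X_1]\le \gamma^{k-1}V(X_1)+K/(1-\gamma)$, which is almost surely finite, and invokes the martingale SLLN conditionally before integrating out.
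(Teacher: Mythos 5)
Your scheme does not establish the theorem at the stated level of generality, and the decisive gap is the reduction to bounded $f$. Theorem \ref{large_numbers} assumes only that $f$ is measurable with $\mu(|f|)<\infty$ and that $\nu_1$ is arbitrary; there is no pointwise relation between $f$ and $V$. Your control of the tail term $\frac1N\sum_{k\le N}|f(X_k)|\mathbf{1}_{\{|f(X_k)|>M\}}$ by the a.s.\ boundedness of $\frac1N\sum_k V(X_k)$ implicitly requires a domination of the form $|f|\mathbf{1}_{\{|f|>M\}}\le \varepsilon_M(1+V)+c_M$, which is simply not available: $f$ can be $\mu$-integrable and yet blow up on a region where $V$ is small (think of $f$ unbounded near a point at which $V$ is of order one), and the drift inequality then gives no information on the time the chain spends there. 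Controlling exactly this term for general $f\in L^1(\mu)$ and general initial law is the hard content of the SLLN for positive Harris chains (Theorem 17.0.1 in \cite{CLTMC}), whose standard proof goes through the split-chain/regeneration construction (i.i.d.\ excursion blocks whose block sums of $|f|$ have mean proportional to $\mu(|f|)$), or through Birkhoff's theorem under $\mathbb{P}_\mu$ combined with Harris recurrence to pass to an arbitrary $\nu_1$; note that the paper does not reprove this result but recalls it (see \cite{notesbenj}), so there is no ``paper proof'' your truncation could be matching. As it stands, your argument can only hope to treat the bounded case.

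Even for bounded $f$ two steps fail as written. First, square integrability of the martingale increments: under \textbf{L\ref{D1}}--\textbf{L2} alone, the contraction \eqref{stima_distanza} gives only $|g(x)|\le C(1+V(x))$ for the Poisson solution $g=\sum_n(P^nf-\mu(f))$, so $D_k^2$ is of order $(1+V)^2$, and your claimed bound $\mathbb{E}[D_k^2]\le C'\,\mathbb{E}[1+V(X_{k-1})]$ would require $g^2\in\mathcal V_V$, i.e.\ $|g|\lesssim 1+\sqrt V$. That is exactly what Proposition \ref{poisson_sqrt} provides, but only under the strengthened condition \textbf{L2'}, which Theorem \ref{large_numbers} does not assume (and $PV\le\gamma V+K$ gives no control of $PV^2$ or $\mu(V^2)$). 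For bounded $f$ this is repairable by interpolating the total-variation estimate, $\min\bigl(2,\,C\chi^n(1+V(x))\bigr)\le \sqrt{2C}\,\chi^{n/2}(1+V(x))^{1/2}$, which yields $|g|\le C'(1+\sqrt{V})$, but this step must be made explicit since it is where the argument survives without \textbf{L2'}. Second, the boundary term: Markov's inequality only gives $\mathbb{P}(V(X_N)>\varepsilon N\mid X_1)\le C(X_1)/(\varepsilon N)$, which is not summable, so Borel--Cantelli does not yield $V(X_N)/N\to 0$ a.s.; in the paper this fact is deduced as a Cesàro difference from the SLLN applied to $V$ itself, which in your architecture would be circular, so an independent a.s.\ argument is needed (convergence in probability is not enough here). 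Your device of conditioning on $X_1$ to dispense with $\nu_1(V)<\infty$ is fine, since $V$ is finite everywhere; the gaps are the truncation step and the two points above.
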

	
	Since, by  (\ref{int_of_V}), $\mu(V)<\infty$ then 
	
	\begin{equation} \label{check}
		\sup_{x\in \mathbb{R}^d} \frac{|f(x)|}{1+V(x)} <\infty
	\end{equation}
	
	is a sufficient condition to ensure that $\mu(|f|)<\infty$.

	Before enunciating the Central Limit Theorem for Markov chains, we need to introduce some facts about the Poisson equation.
	For a fixed $f$ such that $\mu(|f|)<\infty$, a function $ F:\mathbb{R}^d\rightarrow \mathbb{R}$ measurable and such that $\forall x\in \mathbb{R}^d$, $P|F|(x)<\infty$ is called solution of the Poisson equation if it satisfies
	
	\begin{equation} \label{eq_poisson}
		\forall x\in \mathbb{R}^d,\quad F(x)-PF(x)=f(x)-\mu(f)
	\end{equation}
	where $\mu$ as above denotes the invariant probability measure associated to $P$.\\
	If the transition kernel $P$ satisfies the Lyapunov condition and $f\in \mathcal{V}_{V}$, the series of general term $\left(P^nf-\mu(f)\right)_{n\in \mathbb{N}}$ converges in the space $\mathcal{V}_{V}$
	equipped with the norm $\left\|  \right\|_{V,1} $ and
	\begin{equation}\label{poisson}
		F= \sum_{n\in \mathbb{N}}\left(P^nf-\mu(f)\right)
	\end{equation}
	
	is a solution of the Poisson equation (\ref{eq_poisson}). Moreover any solution can be written as $c+\sum_{n\in \mathbb{N}}\left(P^nf-\mu(f)\right)$ for a constant $c\in \mathbb{R}$.\\

	We now strengthen the condition on $P$ in order to have  the Lyapunov condition satisfied with $\sqrt{V}$ as well as with $V$. It can be proved that if $P$ satisfies \textbf{L\ref{D1}} and
	
	\begin{description} 
		\item[L2'] \label{D2'} $\exists R>\dfrac{4K}{\left( 1-\sqrt{\gamma}\right) ^2}$,  $\exists \rho \in (0,1] $, $\forall x, y \in \mathbb{R}^d$ such that $V(x)+V(y)\leq R$, $P(x,\cdot)\wedge P(y,\cdot)(\mathbb{R}^d)\geq \rho,$
	\end{description}
	then one has the Lyapunov condition for the quadruple $\left(V,\gamma,K,R\right)$ and for the quadruple $\left(\sqrt{V},\sqrt{\gamma},\sqrt{K},\sqrt{R}\right).$ In particular \textbf{L\ref{D1}} and Jensen's inequality imply that
	
	\begin{equation}\label{sqrtV}
		P\sqrt{V}(x)\leq \sqrt{PV(x)}\leq\sqrt{\gamma V(x)+K}\leq \sqrt{\gamma} \sqrt{V(x)}+\sqrt{K}.
	\end{equation}
	The following proposition holds.
	\begin{prop} \label{poisson_sqrt}
		Let us assume that the transition kernel $P$ verifies \textbf{L\ref{D1}} and \textbf{L2'}. Then if $f$ is such that $f^2\in \mathcal{V}_{V},$ $F=\sum_{n\in
			\mathbb{N}}(P^nf-\mu(f))$ converges in $\left\| \right\|_{\sqrt{V},1}$. $F$ is solution of the Poisson equation (\ref{eq_poisson}) and satisfies $F^2\in \mathcal{V}_{V}.$  
		Moreover for each $\beta \in (0,\frac{\rho}{\sqrt{K}})$ and for each $n\in \mathbb{N}$ 
		$$ \sup_{x\in \mathbb{R}^d} \frac{\left|P^nf(x) - \mu(f) \right|}{1+\sqrt{V(x)}}\leq D \chi^n \sup_{x\in \mathbb{R}^d}\frac{|f(x)|}{1+\beta \sqrt{V(x)}}  $$ 
		with $\chi = \chi(\rho, \beta,\sqrt{\gamma}, \sqrt{K},\sqrt{R})\in (0,1)$ and $D= D(\beta,\sqrt{K},\sqrt{\gamma})$ a finite constant.
		
	\end{prop}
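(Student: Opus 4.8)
The idea is to transfer the estimates already available for $P$ with Lyapunov function $V$ to the Lyapunov function $\sqrt V$, and then to mimic the construction \eqref{poisson} of the Poisson solution but in the smaller space $\mathcal V_V$ where $f^2$ lives. First I would record the observation already stated in the excerpt: \textbf{L\ref{D1}} together with Jensen's inequality gives \eqref{sqrtV}, namely $P\sqrt V(x)\le\sqrt\gamma\sqrt{V(x)}+\sqrt K$, so $(\sqrt V,\sqrt\gamma,\sqrt K)$ satisfies \textbf{L1}; and \textbf{L2'} is exactly the minorization condition \textbf{L2} for the quadruple $(\sqrt V,\sqrt\gamma,\sqrt K,\sqrt R)$, since $V(x)+V(y)\le R$ is the same constraint as before while $\sqrt R>\tfrac{2\sqrt K}{1-\sqrt\gamma}$ follows from $R>\tfrac{4K}{(1-\sqrt\gamma)^2}$. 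Hence the whole theory recalled above — in particular the geometric contraction \eqref{stima_distanza} — applies verbatim with $V$ replaced by $\sqrt V$, giving for every $\beta\in(0,\rho/\sqrt K)$ a rate $\chi=\chi(\rho,\beta,\sqrt\gamma,\sqrt K,\sqrt R)\in(0,1)$.

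Next I would prove the quantitative bound on $P^nf-\mu(f)$. Assuming $f^2\in\mathcal V_V$ one has $|f(x)|\le c(1+\sqrt{V(x)})$ for some constant $c$, so $f\in\mathcal V_{\sqrt V}$ and in particular $\mu(|f|)<\infty$ by \eqref{int_of_V} (since $\mu(\sqrt V)\le\sqrt{\mu(V)}<\infty$ by Jensen). Writing $P^nf(x)-\mu(f)=\delta_x P^n(f)-\mu(f)$ and applying \eqref{stima_distanza} with the Lyapunov function $\sqrt V$ and with $\sigma=\delta_x$, one gets
$$\sup_{x\in\mathbb R^d}\frac{|P^nf(x)-\mu(f)|}{1+\beta\sqrt{V(x)}}\le \chi^n\, d_{\sqrt V,\beta}(\delta_x,\mu)\,\|f\|_{\sqrt V,\beta}\le \chi^n\big(1+\beta\sqrt{V(x)}+\mu(\beta\sqrt V)\big)\|f\|_{\sqrt V,\beta}/(1+\beta\sqrt{V(x)}),$$
which after absorbing the $\mu(\sqrt V)$-term into a finite constant $D=D(\beta,\sqrt K,\sqrt\gamma)$ and switching from the weight $1+\beta\sqrt V$ to $1+\sqrt V$ (another bounded change of norm) yields the displayed inequality $\sup_x\frac{|P^nf(x)-\mu(f)|}{1+\sqrt{V(x)}}\le D\chi^n\sup_x\frac{|f(x)|}{1+\beta\sqrt{V(x)}}$. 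Summing the geometric series in $n$ shows that $F=\sum_{n\in\mathbb N}(P^nf-\mu(f))$ converges in $\|\cdot\|_{\sqrt V,1}$; the telescoping identity $\sum_{k=0}^{n}(P^kf-\mu(f))=f-\mu(f)+P\big(\sum_{k=0}^{n-1}(P^kf-\mu(f))\big)$ together with the continuity of $\phi\mapsto P\phi$ on $(\mathcal V_{\sqrt V},\|\cdot\|_{\sqrt V,1})$ (a consequence of \eqref{sqrtV}) lets one pass to the limit and conclude that $F-PF=f-\mu(f)$, i.e. $F$ solves the Poisson equation \eqref{eq_poisson}; finiteness of $P|F|$ also follows from $F\in\mathcal V_{\sqrt V}$ and \eqref{sqrtV}.

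The remaining, and most delicate, point is $F^2\in\mathcal V_V$, i.e.\ an estimate on $F$ with the genuine weight $\sqrt V$ rather than $1+\sqrt V$ only up to additive constants. From the $n$-term bound, $|F(x)|\le\sum_{n\ge0}|P^nf(x)-\mu(f)|\le D\big(\sum_{n\ge0}\chi^n\big)\,\|f\|_{\sqrt V,\beta}\,(1+\sqrt{V(x)})=\frac{D}{1-\chi}\|f\|_{\sqrt V,\beta}(1+\sqrt{V(x)})$, hence $F^2(x)\le C(1+\sqrt{V(x)})^2\le 2C(1+V(x))$, so $\sup_x F^2(x)/(1+V(x))<\infty$, which is exactly $F^2\in\mathcal V_V$. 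The only genuine subtlety I anticipate is being careful that the contraction \eqref{stima_distanza}, stated for arbitrary initial law $\sigma$, is applied with $\sigma=\delta_x$ and that the resulting bound is uniform in $x$ after dividing by $1+\sqrt{V(x)}$; the term $d_{\sqrt V,\beta}(\delta_x,\mu)$ grows like $\sqrt{V(x)}$, which is precisely compensated by the denominator, so no uniformity is lost — but this cancellation is the crux and should be written out carefully.
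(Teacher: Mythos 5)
Your argument is correct and follows exactly the route the paper intends for this recalled result: transfer \textbf{L\ref{D1}}--\textbf{L2'} to the quadruple $\left(\sqrt{V},\sqrt{\gamma},\sqrt{K},\sqrt{R}\right)$ via \eqref{sqrtV} and the inclusion $\left\lbrace \sqrt{V(x)}+\sqrt{V(y)}\leq \sqrt{R}\right\rbrace\subseteq\left\lbrace V(x)+V(y)\leq R\right\rbrace$, apply the contraction \eqref{stima_distanza} with $\sigma=\delta_x$ and absorb $d_{\sqrt{V},\beta}(\delta_x,\mu)\leq 1+\beta\sqrt{V(x)}+1+\beta\mu(\sqrt{V})$ (with $\mu(\sqrt{V})\leq \sqrt{K}/(1-\sqrt{\gamma})$) into $D(\beta,\sqrt{K},\sqrt{\gamma})$, then sum the geometric series and pass to the limit in the telescoping identity to get the Poisson equation and $F^2\in\mathcal{V}_V$. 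The only blemish is notational: in your displayed inequality the supremum over $x$ on the left coexists with the $x$-dependent quantity $d_{\sqrt{V},\beta}(\delta_x,\mu)$ on the right, so the estimate should be stated pointwise in $x$ before taking the supremum, as your closing remark in fact explains.
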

	We are now ready to enunciate the Central Limit Theorem for Markov chains in the linear case (see as reference book \cite{CLTMC}).
	
	\begin{teo}
		Let us assume that the transition kernel $P$ verifies \textbf{L\ref{D1}} and \textbf{L2'}.  Then for each function $f: \mathbb{R}^d\rightarrow \mathbb{R}$ measurable and such that  $f^2\in \mathcal{V}_{V},$
		
		$$\sqrt{N}\left(\frac{1}{N}\sum_{i=1}^{N}f\left(X_i \right)-\int_{\mathbb{R}^d}f(x)\mu(dx)  \right) \overset{d}{\Longrightarrow } \mathcal{N}\left(0,\mu\left(F^2\right)- \mu((PF)^2)\right) $$
		with $F$ solution of the Poisson equation
		$$F(x)-PF(x)=f(x)-\mu(f) \quad x\in \mathbb{R}^d. $$ 
	\end{teo}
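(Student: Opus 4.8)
The plan is to combine the classical martingale decomposition attached to the Poisson equation with a central limit theorem for martingale increments, in the spirit of the linearization strategy described in the introduction.

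\textbf{Step 1 (Poisson equation and integrability).} Since $f^2\in\mathcal{V}_{V}$, Proposition \ref{poisson_sqrt} provides a solution $F$ of $F-PF=f-\mu(f)$ with $F^2\in\mathcal{V}_{V}$, so $|F|\le C(1+\sqrt V)$ for some $C<\infty$; combining this with \eqref{sqrtV} gives $|PF|\le P|F|\le C(1+P\sqrt V)\le C'(1+\sqrt V)$, hence $(PF)^2\in\mathcal{V}_{V}$, and therefore the conditional–variance function $g:=P(F^2)-(PF)^2$ also lies in $\mathcal{V}_{V}$ (using $PV\le\gamma V+K$ for the $P(F^2)$ term). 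By \eqref{int_of_V} one has $\mu(V)<\infty$, so $F^2$, $(PF)^2$ and $g$ are all $\mu$-integrable.

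\textbf{Step 2 (martingale decomposition).} Writing $f(X_i)-\mu(f)=F(X_i)-PF(X_i)$ and telescoping,
\begin{equation*}
\sum_{i=1}^N\bigl(f(X_i)-\mu(f)\bigr)=\sum_{i=2}^N M_i+F(X_1)-PF(X_N),\qquad M_i:=F(X_i)-PF(X_{i-1}),
\end{equation*}
and, with $\mathcal F_i:=\sigma(X_1,\dots,X_i)$, the Markov property gives $\mathbb E[F(X_i)\mid\mathcal F_{i-1}]=PF(X_{i-1})$, so $(M_i)_{i\ge2}$ is a sequence of $(\mathcal F_i)$–martingale increments. Dividing by $\sqrt N$, it remains to show $\tfrac1{\sqrt N}\sum_{i=2}^N M_i$ converges in law to the announced Gaussian and that the boundary term is negligible in probability: $F(X_1)/\sqrt N\to0$ a.s., while $|PF(X_N)|\le C'(1+\sqrt{V(X_N)})$ and iterating \eqref{sqrtV} gives $\mathbb E[\sqrt{V(X_N)}]\le\gamma^{(N-1)/2}\nu_1(\sqrt V)+\sqrt K/(1-\sqrt\gamma)$, bounded in $N$ once $\nu_1(\sqrt V)<\infty$, so $PF(X_N)/\sqrt N\to0$ in $L^1$.

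\textbf{Step 3 (martingale CLT and the main obstacle).} I would invoke the central limit theorem for martingale increments, which needs two inputs. First, the conditional variances: $\mathbb E[M_i^2\mid\mathcal F_{i-1}]=P(F^2)(X_{i-1})-(PF(X_{i-1}))^2=g(X_{i-1})$, so Theorem \ref{large_numbers} applied to $g\in\mathcal{V}_{V}$ gives
\begin{equation*}
\frac1N\sum_{i=2}^N\mathbb E[M_i^2\mid\mathcal F_{i-1}]=\frac1N\sum_{i=1}^{N-1}g(X_i)\xrightarrow[N\to\infty]{}\mu(g)=\mu\bigl(P(F^2)\bigr)-\mu\bigl((PF)^2\bigr)=\mu(F^2)-\mu\bigl((PF)^2\bigr)\quad\text{a.s.},
\end{equation*}
the last equality by invariance of $\mu$, which identifies the limiting variance. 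Second, the conditional Lindeberg condition, which I expect to be the delicate point since the chain is not started from $\mu$ and stationary ergodicity is unavailable. Writing the Lindeberg sum as $\tfrac1N\sum_{i=1}^{N-1}\psi_N(X_i)$ with $\psi_N(x):=\int\bigl(F(y)-PF(x)\bigr)^2\mathbf{1}_{\{|F(y)-PF(x)|>\epsilon\sqrt N\}}P(x,dy)$, one exploits the monotonicity $0\le\psi_N\le\psi_M\le g$ for $N\ge M$: Theorem \ref{large_numbers} applied to $\psi_M$ yields $\limsup_N\tfrac1N\sum_{i=1}^{N-1}\psi_N(X_i)\le\mu(\psi_M)$ a.s., and letting $M\to\infty$ with dominated convergence ($\psi_M\downarrow0$, $\psi_M\le g\in L^1(\mu)$) gives $\mu(\psi_M)\to0$, so the conditional Lindeberg condition holds a.s. The martingale CLT then provides $\tfrac1{\sqrt N}\sum_{i=2}^N M_i\Rightarrow\mathcal N\bigl(0,\mu(F^2)-\mu((PF)^2)\bigr)$, and Slutsky's lemma combined with Step 2 concludes. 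Finally, the variance is independent of the chosen solution: replacing $F$ by $F+c$ leaves both $g$ and $\mu(F^2)-\mu((PF)^2)$ unchanged.
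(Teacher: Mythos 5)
The paper does not actually prove this statement: it is quoted as a classical result with a pointer to \cite{CLTMC}, so there is no internal proof to compare against. Your argument is the standard Poisson-equation proof, and it is in fact exactly the linear-case specialization of the paper's own proof of Theorem \ref{main_teo}: your increments $M_i=F(X_i)-PF(X_{i-1})$ are the analogue of $K_{3,N}$, your boundary terms $F(X_1)$ and $PF(X_N)$ correspond to $K_{0,N}$ and $K_{2,N}$, and the term $K_{1,N}$ disappears because here the measure argument of $F$ is frozen. Your identification of the bracket via $\mathbb{E}[M_i^2\mid\mathcal F_{i-1}]=P(F^2)(X_{i-1})-(PF(X_{i-1}))^2$ and Theorem \ref{large_numbers} is correct, and your Lindeberg argument (monotone domination $\psi_N\le\psi_M\le g$, the law of large numbers for each fixed $M$, then $\mu(\psi_M)\downarrow0$ by dominated convergence) is clean and arguably simpler than the splitting the paper uses at the corresponding point of Theorem \ref{main_teo}, where the term involving $\nu_1P^{i-1}$ is controlled through the $d_{V,\beta}$ contraction \eqref{stima_distanza}.

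The one genuine gap is the treatment of the initial law. Corollary 3.1 of \cite{HallHeyde} is stated for square-integrable martingale arrays, and $\mathbb{E}[M_i^2]<\infty$ requires $\mathbb{E}[V(X_{i-1})]<\infty$, i.e.\ $\nu_1(V)<\infty$; likewise your $L^1$ bound for $PF(X_N)/\sqrt N$ needs $\nu_1(\sqrt V)<\infty$, which you only flag in passing and which is not part of the hypotheses. This is fixable by a standard reduction, and the paper itself supplies one for the nonlinear theorem (Lemma \ref{int_V}); in the present linear setting you could alternatively condition on $X_1=x$ (so that the initial law is a Dirac mass, for which $V$ is trivially integrable) and integrate in $x$ by bounded convergence, since the Gaussian limit does not depend on the starting point. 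For the boundary term an argument valid for every $\nu_1$ is also available without any moment assumption: by Theorem \ref{large_numbers}, $\frac1N\sum_{i\le N}V(X_i)\to\mu(V)$ a.s., hence $V(X_N)/N\to0$ a.s.\ and $PF(X_N)/\sqrt N\to0$ a.s., exactly as the paper does for $K_{2,N}$. With one of these repairs your proof is complete.
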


	\subsubsection{Statement of the theorem}
	As in the statement of Theorem \ref{independent_case}, for a given measure $\mu\in\mathcal{P}_\ell\left(\mathbb{R}^d \right)$, let us consider the Hyphoteses \textbf{RU\ref{functional_derivative2}-\ref{derivative_conv2}}, together with 
	
	\debutRU
	\item  \label{growth_diff} 
	$\exists \alpha\in (\frac{1}{2},1], \exists C < \infty, \forall \mu_1,\mu_2\in B(\mu,r)$
	$$\sup_{x\in \mathbb{R}^d}
	\frac{\left| \dfrac{\delta U}{\delta m}(\mu_2,x)-\dfrac{\delta U}{\delta m}(\mu_1,x)\right| }{1+|x|^{\frac{\ell}{2}}}
	\leq C\left( \int_{\mathbb{R}^d}(1+|y|^{\frac{\ell}{2\alpha}}) |\mu_2-\mu_1|(dy) \right)^\alpha.$$
	\finRU	
	\debutL
	\item $C_{\ell} :=  \sup_{x\in \mathbb{R}^d} \dfrac{|x|^{\ell}}{1+V(x)}< \infty.$
	\finL
	\begin{remark}
		Let us observe that by (\ref{int_of_V}) and \textbf{L3},
		we have 
		\begin{equation} \label{l_moment_finite}
			\int_{\mathbb{R}^d} |x|^\ell \mu(dx)< \infty.
		\end{equation}
	\end{remark}
	We are now ready to provide the Strong Law of Large Numbers and the Central Limit Theorem in this context.
	
	\begin{teo}[LLN for Markov chains]\label{llnmarkov}
		Let $\ell\geq 0$ and let $X_i, \,i\geq1$ a Markov chain with initial law $\nu_1$ and transition kernel $P$ that we assume to satisfy \textbf{L\ref{D1}}, \textbf{L2} and \textbf{L3}. Let $\mu$ denote its unique invariant probability measure and let us define 
		$$\mu_N :=\frac{1}{N}\sum_{i=1}^{N} \delta_{X_i}.$$ 
		Then $$W_\ell\left(\mu_N, \mu\right) \underset{N}{\longrightarrow} 0 \quad a.s.$$
		\begin{proof}
			Let us first prove that $\mu_{N} \rightharpoonup \mu$ a.s. It can be proved that for probability measures  it is possible to test the weak convergence over the continuos functions with compact support (see for instance Corollary 30.9 \cite{Bauer}). Therefore 
			$$\mathbb{P} \left( \mu_{N}(f)\underset{N}{\rightarrow} \mu(f)\,\, \forall f\in C_b(\mathbb{R}^d)\right) = \mathbb{P} \left( \mu_{N}(f)\underset{N}{\rightarrow} \mu(f)\,\, \forall f\in C_c(\mathbb{R}^d)\right) $$
			
			where $C_b(\mathbb{R}^d)$ denotes the set of bounded continuos functions and $C_c(\mathbb{R}^d)$  the set of continuos functions with compact support. Since  $C_c(\mathbb{R}^d)$ is separable with respect to the infinity norm, we can apply  Theorem \ref{large_numbers} to deduce that the right hand-side is equal to $1$ and so the almost sure weak convergence of $\mu_{N}$ to $\mu$ is proved. If $\ell >0$, to conclude the proof of the Wasserstein convergence we need to prove the convergence of the $\ell th$ moment. Since (\ref{l_moment_finite}) holds,
			we can again apply Theorem \ref{large_numbers} and deduce that
			$$ \lim_{N\rightarrow \infty}\int_{\mathbb{R}^d}|x|^\ell\mu_{N}(dx)  = \int_{\mathbb{R}^d}|x|^\ell\mu(dx) \quad a.s..$$
		\end{proof}
	\end{teo}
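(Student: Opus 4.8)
The plan is to exploit the characterization of $W_\ell$-convergence recalled just after \eqref{wasserstein}: when $\ell>0$, $W_\ell(\mu_N,\mu)\to 0$ is equivalent to the conjunction of the weak convergence $\mu_N\rightharpoonup\mu$ and the convergence of the $\ell$-th moments $\int_{\mathbb{R}^d}|x|^\ell\mu_N(dx)\to\int_{\mathbb{R}^d}|x|^\ell\mu(dx)$; when $\ell=0$ one has $\mathcal{P}_0(\mathbb{R}^d)=\mathcal{P}(\mathbb{R}^d)$ and $W_0$ metrizes weak convergence, so only the first statement is needed. I would establish each of these two convergences on a full-probability event and then intersect the two events.

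For the weak convergence, the idea is to reduce everything to the scalar strong law of large numbers of Theorem \ref{large_numbers}. Weak convergence of probability measures can be tested against $C_c(\mathbb{R}^d)$, which is separable for the uniform norm; fix a countable uniformly dense subset $\mathcal{H}\subset C_c(\mathbb{R}^d)$. Each $h\in\mathcal{H}$ is bounded, hence $\mu(|h|)\le\|h\|_\infty<\infty$, so Theorem \ref{large_numbers} gives $\mathbb{P}_{\nu_1}\big(\mu_N(h)\to\mu(h)\big)=1$ for each such $h$; since $\mathcal{H}$ is countable, on a single event $\Omega_1$ of probability one this holds simultaneously for all $h\in\mathcal{H}$. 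A routine $3\varepsilon$-argument (approximate $f\in C_c(\mathbb{R}^d)$ by $h\in\mathcal{H}$ in sup norm and use $|\mu_N(f)-\mu_N(h)|\le\|f-h\|_\infty$ together with the analogous bound for $\mu$) then upgrades this, on the same event $\Omega_1$, to $\mu_N(f)\to\mu(f)$ for every $f\in C_c(\mathbb{R}^d)$, i.e. $\mu_N\rightharpoonup\mu$ on $\Omega_1$.

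For the moment convergence (needed only when $\ell>0$), I would invoke Theorem \ref{large_numbers} once more, this time with the function $f(x)=|x|^\ell$. The only hypothesis to check is $\mu(|\cdot|^\ell)<\infty$, which is precisely \eqref{l_moment_finite}; this follows from the bound $\mu(V)\le K/(1-\gamma)$ in \eqref{int_of_V} together with the growth control $C_\ell=\sup_{x\in\mathbb{R}^d}\frac{|x|^\ell}{1+V(x)}<\infty$ furnished by \textbf{L3}. Theorem \ref{large_numbers} then yields $\frac1N\sum_{i=1}^N|X_i|^\ell\to\mu(|\cdot|^\ell)$ on a full-probability event $\Omega_2$, that is $\int_{\mathbb{R}^d}|x|^\ell\mu_N(dx)\to\int_{\mathbb{R}^d}|x|^\ell\mu(dx)$ on $\Omega_2$. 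On $\Omega_1\cap\Omega_2$ both the weak convergence and the $\ell$-th moment convergence hold, so the characterization of $W_\ell$ gives $W_\ell(\mu_N,\mu)\to 0$ there, proving the claim.

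The main obstacle is the passage, in the weak-convergence step, from the countable family $\mathcal{H}$ on which Theorem \ref{large_numbers} applies directly to the uncountable family $C_c(\mathbb{R}^d)$: one cannot apply the strong law to uncountably many test functions and still retain a single full-probability event, so the separability of $C_c(\mathbb{R}^d)$ for the uniform norm and the density approximation are essential. The remainder is bookkeeping with the Wasserstein characterization and the Lyapunov estimates \eqref{int_of_V}--\eqref{l_moment_finite}.
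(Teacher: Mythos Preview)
Your proposal is correct and follows essentially the same approach as the paper's proof: reduce $W_\ell$-convergence to weak convergence plus $\ell$-th moment convergence, obtain weak convergence by applying Theorem~\ref{large_numbers} to a countable uniformly dense family in $C_c(\mathbb{R}^d)$ and extending by separability, and obtain moment convergence by applying Theorem~\ref{large_numbers} to $f(x)=|x|^\ell$ via \eqref{l_moment_finite}. Your write-up is in fact more explicit than the paper's about the $3\varepsilon$ density argument needed to pass from the countable test family to all of $C_c(\mathbb{R}^d)$.
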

	\begin{teo}[CLT for Markov chains] \label{main_teo}
		Under the same notations of Theorem \ref{llnmarkov},
		let us assume that the transition kernel $P$ satisfies \textbf{L\ref{D1}}, \textbf{L2'}  and \textbf{L3}. Let us morevore assume \textbf{RU\ref{functional_derivative2}-\ref{derivative_conv2}}, \textbf{RU\ref{growth_diff}}.\\
		Then 
		$$ \sqrt{N}\left(U\left(\mu_{N} \right)-U\left(\mu \right)\right) \overset{d}{\Longrightarrow} \mathcal{N}\left(0, \mu\left(F^2\left(\mu,\cdot\right) \right) -  \mu\left( \left( PF\right)^2\left(\mu,\cdot \right)\right) \right)  $$
		
		where $F\left( \mu,\cdot\right)$ denotes the unique (up to an additive constant) solution of the Poisson equation
		$$F(\mu,x) - PF(\mu,x) = \frac{\delta U}{\delta m}\left(\mu,x \right)-\int_{\mathbb{R}^d} \frac{\delta U}{\delta m}\left(\mu,y \right)\mu(dy), \quad x\in \mathbb{R}^d.$$  
	\end{teo}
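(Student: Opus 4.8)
The plan is to mimic the linearization strategy recalled in the introduction: write $\sqrt N(U(\mu_N)-U(\mu))$ as a leading martingale-type term plus a remainder, identify the leading term with $\frac{1}{\sqrt N}\sum_{i=1}^N\big(\frac{\delta U}{\delta m}(\mu,X_i)-\mu(\frac{\delta U}{\delta m}(\mu,\cdot))\big)$, and show the remainder vanishes in probability. First, I would apply Lemma \ref{linearderivative} along the segment $[\mu,\mu_N]$. By Theorem \ref{llnmarkov}, $W_\ell(\mu_N,\mu)\to 0$ a.s., so for $N$ large enough $\mu_N\in B(\mu,r)$ and the whole segment lies in $B(\mu,r)$ (a ball for $W_\ell$ is convex); \textbf{RU\ref{growth2}} gives the uniform growth bound needed to invoke the lemma. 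This yields
\begin{equation}\label{prop_decomp}
\sqrt N\big(U(\mu_N)-U(\mu)\big)=\sqrt N\int_0^1\!\!\int_{\mathbb R^d}\frac{\delta U}{\delta m}\big((1-s)\mu+s\mu_N,x\big)(\mu_N-\mu)(dx)\,ds.
\end{equation}
I would then split the integrand as $\frac{\delta U}{\delta m}(\mu,x)$ plus the difference $\frac{\delta U}{\delta m}((1-s)\mu+s\mu_N,x)-\frac{\delta U}{\delta m}(\mu,x)$, so that \eqref{prop_decomp} becomes a main term $\sqrt N\int \frac{\delta U}{\delta m}(\mu,x)(\mu_N-\mu)(dx)$ plus a remainder $R_N$.

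For the main term, note $\sqrt N\int \frac{\delta U}{\delta m}(\mu,x)(\mu_N-\mu)(dx)=\frac{1}{\sqrt N}\sum_{i=1}^N\big(g(X_i)-\mu(g)\big)$ with $g=\frac{\delta U}{\delta m}(\mu,\cdot)$. By \textbf{RU\ref{growth2}}, $|g(x)|\le C(1+|x|^{\ell/2})$, so $g^2\in\mathcal V_V$ by \textbf{L3}; hence the CLT for Markov chains recalled just before the statement applies and gives convergence in distribution to $\mathcal N(0,\mu(F^2(\mu,\cdot))-\mu((PF)^2(\mu,\cdot)))$, with $F(\mu,\cdot)$ the Poisson solution associated to $g-\mu(g)$; Proposition \ref{poisson_sqrt} guarantees this $F$ exists, is unique up to a constant, and satisfies $F^2\in\mathcal V_V$ so that the asymptotic variance is finite. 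By Slutsky's lemma it then suffices to prove $R_N\to 0$ in probability.

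The remainder is
\[
R_N=\sqrt N\int_0^1\!\!\int_{\mathbb R^d}\Big(\tfrac{\delta U}{\delta m}\big((1-s)\mu+s\mu_N,x\big)-\tfrac{\delta U}{\delta m}(\mu,x)\Big)(\mu_N-\mu)(dx)\,ds.
\]
To control it I would use the quantitative modulus of continuity \textbf{RU\ref{growth_diff}}: with $\mu_1=\mu$, $\mu_2=(1-s)\mu+s\mu_N$ one has $|\mu_2-\mu_1|=s|\mu_N-\mu|$, so
\[
\sup_{x}\frac{|\frac{\delta U}{\delta m}(\mu_2,x)-\frac{\delta U}{\delta m}(\mu,x)|}{1+|x|^{\ell/2}}\le C\Big(\int_{\mathbb R^d}(1+|y|^{\ell/(2\alpha)})\,|\mu_N-\mu|(dy)\Big)^\alpha,
\]
uniformly in $s\in[0,1]$. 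Plugging this in and bounding $\int(1+|x|^{\ell/2})|\mu_N-\mu|(dx)$ crudely gives
\[
|R_N|\le C\sqrt N\Big(\mu_N(1+|\cdot|^{\ell/2})+\mu(1+|\cdot|^{\ell/2})\Big)\Big(\mu_N(1+|\cdot|^{\ell/(2\alpha)})+\mu(1+|\cdot|^{\ell/(2\alpha)})\Big)^\alpha;
\]
however the bare $\sqrt N$ in front is too lossy, so instead I would keep one factor of $(\mu_N-\mu)$ paired with $\sqrt N$. Concretely, write $|R_N|\le C\,\Big(\mu_N\!\big((1+|x|^{\ell/(2\alpha)})\big)+\mu(\cdot)\Big)^{\alpha}\cdot \sqrt N\,\big\|\mu_N-\mu\big\|_{\ell/2}$ after absorbing the $(1+|x|^{\ell/2})$ weight, then combine: $\sqrt N\|\mu_N-\mu\|_{\ell/2}$ is $O_{\mathbb P}(1)$ by the Markov-chain CLT applied to the (countably many, by separability) test functions generating $\|\cdot\|_{\ell/2}$ — more carefully, one writes $\sqrt N\|\mu_N-\mu\|_{\ell/2}\le \sqrt N\|\mu_N-\bar\nu_N\|_{\ell/2}+\sqrt N\|\bar\nu_N-\mu\|_{\ell/2}$ where the second term tends to $0$ by \eqref{stima_distanza} and \textbf{L3}, and the first is $O_{\mathbb P}(1)$ via the CLT for the fluctuation field. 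Here $\bar\nu_N=\frac1N\sum\nu_1P^{i-1}$. The gain of $\alpha>\tfrac12$ over $\tfrac12$ is exactly what makes the product of a bounded-in-probability factor with a vanishing factor work: $\sqrt N\,\|\mu_N-\mu\|_{\ell/2}^{\,2\alpha}$-type quantities like $\sqrt N\cdot N^{-\alpha}=N^{1/2-\alpha}\to 0$.

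The main obstacle, and the place the argument has to be done with care rather than crudely, is the last step: showing $\sqrt N\|\mu_N-\mu\|_{\ell/2}$ (equivalently the relevant weighted-variation fluctuations) is bounded in probability while simultaneously extracting the extra power $\alpha>1/2$ from \textbf{RU\ref{growth_diff}}, and controlling the moments $\mu_N(|x|^{\ell/(2\alpha)})$ uniformly — for which I would invoke Theorem \ref{large_numbers} together with \textbf{L3} (giving $\mu(|x|^{\ell/\alpha})<\infty$ when $\ell/\alpha\le$ the growth allowed by $V$; if not, one needs $2\alpha\ge 1$ so that $\ell/(2\alpha)\le\ell$ and \eqref{l_moment_finite} suffices after a Jensen step). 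Handling the interplay between the $\|\cdot\|_0$-type and moment-type contributions that \textbf{RU\ref{growth_diff}} bundles into a single $(1+|y|^{\ell/(2\alpha)})$ weight, and verifying the Markov-chain fluctuation field is tight in the dual norm $\|\cdot\|_{\ell/2}$ (rather than merely for fixed test functions), is the technical heart of the proof.
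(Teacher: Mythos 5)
There is a genuine gap, and it sits exactly where you locate ``the technical heart'': the remainder coming from your one-shot linearization along the segment $[\mu,\mu_N]$ cannot be controlled by \textbf{RU\ref{growth_diff}} in the way you propose. The right-hand side of \textbf{RU\ref{growth_diff}} is a weighted \emph{total-variation} quantity, and for an empirical measure this is not small: if $\mu$ is atomless then $|\mu_N-\mu|(\mathbb{R}^d)=2$ a.s., so $\bigl(\int(1+|y|^{\ell/(2\alpha)})|\mu_N-\mu|(dy)\bigr)^\alpha$ is bounded below by a positive constant, and likewise $\|\mu_N-\mu\|_{\ell/2}\geq \|\mu_N-\mu\|_0=2$. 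Consequently your key claim that $\sqrt N\,\|\mu_N-\mu\|_{\ell/2}=O_{\mathbb P}(1)$ is false --- it is of order $\sqrt N$ --- and no CLT or tightness argument for the fluctuation field in that dual norm can hold, since the supremum there runs over \emph{all} measurable functions with the growth bound, not over a Donsker-type class. So in your bound for $R_N$ both factors are $\Theta(1)$ and the prefactor $\sqrt N$ survives; the exponent $\alpha>\tfrac12$ never gets a chance to act, because it only produces a gain when the measure increment itself has weighted total variation of order $1/N$.

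This is precisely why the paper does not linearize between $\mu$ and $\mu_N$ directly. It telescopes through the interpolating measures $\mu_N^{i,s}=\frac1N\sum_{j<i}\delta_{X_j}+\frac sN\delta_{X_i}+\bigl(1+\frac{1-i-s}{N}\bigr)\mu$, so that each application of \textbf{RU\ref{growth_diff}} compares $\mu_N^{i,s}$ with $\mu_N^{i,0}$, whose difference satisfies $|\mu_N^{i,s}-\mu_N^{i,0}|\leq\frac sN(\delta_{X_i}+\mu)$; each term is then $O(N^{-\alpha})$, the sum of $N$ terms times $\sqrt N$ is $O(N^{1/2-\alpha})\to0$, and a stopping time $I_N$ keeps the argument inside $B(\mu,r)$. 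The price is that the leading term $Q_N$ has the derivative evaluated at the predictable measures $\mu_N^{i\wedge I_N,0}$ rather than at $\mu$, so one cannot invoke the linear Markov-chain CLT directly as you do; instead the paper substitutes the Poisson equation $F(m,\cdot)-PF(m,\cdot)=\frac{\delta U}{\delta m}(m,\cdot)-\mu(\frac{\delta U}{\delta m}(m,\cdot))$, isolates a martingale array $K_{3,N}$ (plus three terms shown to vanish using the norm estimates $\|F(m,\cdot)\|_{\sqrt V,1}\leq \bar C_{1/2}$ and the Lipschitz-type bound in $m$), and applies the martingale CLT with bracket and Lindeberg conditions, using \textbf{RU\ref{derivative_conv2}} to replace the moving measures by $\mu$ in the bracket. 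Your identification of the limit law and of the role of Proposition \ref{poisson_sqrt} is correct, but without the telescoping (or some other device making the measure increments $O(1/N)$ in weighted total variation) the remainder estimate collapses, so the proposal as written does not prove the theorem.
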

In what follows, we provide an example of functional satisfying the Hypotheses \textbf{RU\ref{functional_derivative2}-\ref{derivative_conv2}}, \textbf{RU\ref{growth_diff}}.
	\begin{ex}(U-statistics) 
	Let $\ell> 0$, $n\in\mathbb{N}\setminus \left\lbrace0,1 \right\rbrace $ and $\phi:(\mathbb{R}^{d})^n\rightarrow \mathbb{R}$ be a symmetric continuous function such that 
	\begin{equation}\label{ipotesi_phi}
	\lim_{|x_1|\rightarrow \infty}	\sup_{(x_2,\cdots,x_n)\in (\mathbb{R}^d)^{n-1}}\dfrac{|\phi(x_1,x_2,\cdots,x_n)|}{\prod_{i=1}^{n}(1+|x_i|^\frac{\ell}{2})}=0.
	\end{equation}

	We consider the function on $\mathcal{P}_\ell(\mathbb{R}^d)$ defined by 
	$$U(\mu):=\int_{(\mathbb{R}^{d})^n}\phi(x_1,\cdots,x_n)\mu(dx_1)\cdots\mu(dx_n).$$
	It is possible to prove (see Example 2.7 \cite{Jourdain}) that its linear functional derivative is defined for each $\mu\in W_\ell(\mathbb{R}^d)$ and it is given by 
	$$\frac{\delta U}{\delta m}(\mu,x_1)=n\int_{(\mathbb{R}^{d})^{n-1}} (\phi(x_1,x_2,\cdots,x_n)-\phi(0,x_2,\cdots,x_n))\mu(dx_2)\cdots\mu(dx_n), \,\, x_1\in \mathbb{R}^d.$$

	By the symmetry of $\phi$ and  (\ref{ipotesi_phi}), $\forall \epsilon>0$ $\exists M_{\epsilon}$ such that for $|x_i|>  M_{\epsilon}$
	
	\begin{equation} \label{deflimite}
		\sup_{(x_1,,\cdots,x_{i-1},x_{i+1},\cdots,x_n)\in (\mathbb{R}^d)^{n-1}}\dfrac{|\phi(x_1,x_2,\cdots,x_n)|}{\prod_{j=1}^{n}(1+|x_j|^\frac{\ell}{2})} \leq \epsilon.
	\end{equation}

Moreover by (\ref{deflimite}) and the continuity of $\phi$, $\forall$ $(x_1,\cdots,x_n)\in (\mathbb{R}^d)^n$ one has 
	\begin{align}
		\left| \phi(x_1,x_2,\cdots,x_n)\right| &=\left| \phi(x_1,x_2,\cdots,x_n)\right| 1_{\left\|x\right\|_\infty > M_1 } + \left| \phi(x_1,x_2,\cdots,x_n)\right|  1_{\left\|x\right\|_\infty \leq M_1 }\\
		& \leq \prod_{i=1}^{n}(1+|x_i|^\frac{\ell}{2}) + C \label{estimation}
	\end{align}
	for some positive constant $C<\infty$. 
	We are now going to verify that  \textbf{RU\ref{growth2}-\ref{derivative_conv2}} and \textbf{RU\ref{growth_diff}} are satisfied.\\
	Let us fix $\mu\in \mathcal{P}_\ell(\mathbb{R}^d)$ and $r>0$. We recall that $B(\mu,r)$ denotes the ball centered at $\mu$ with radius $r$ for the metric $W_\ell$.
	\begin{itemize}
		\item [\textbf{RU2.}]
	 Let $(\tilde{\mu},x_1) \in B(\mu,r)\times \mathbb{R}^d$. Using the above estimation we obtain
		\begin{align*}
			\left|\dfrac{\delta U}{\delta m}(\tilde{\mu},x_1) \right|&\leq n\int_{(\mathbb{R}^{d})^{n-1}}\left( \left| \phi(x_1,x_2,\cdots,x_n)\right|+ \left| \phi(0,x_2,\cdots,x_n)\right|\right) \tilde{\mu}(dx_2)\cdots\tilde{\mu}(dx_n)\\
			&\leq n\int_{(\mathbb{R}^{d})^{n-1}}\left( \prod_{i=1}^{n}(1+|x_i|^\frac{\ell}{2})+ \prod_{i=2}^{n}(1+|x_i|^\frac{\ell}{2})\right) \tilde{\mu}(dx_2)\cdots\tilde{\mu}(dx_n)+ 2Cn\\
			&\leq D(1+|x_1|^\frac{\ell}{2}) 
		\end{align*}
		for some positive constant $D<\infty$.
		\item[\textbf{RU3.}]  If $W_\ell(\tilde{\mu},\mu)$ goes to $0$, let $\pi\in \mathcal{P}(\mathbb{R}^{2d})$ with $\pi\left(\cdot\times \mathbb{R}^{d} \right)=\tilde{\mu}(\cdot)$, $\pi\left(\mathbb{R}^{d}\times \cdot \right)=\mu(\cdot)$.  One has
		\begin{align}
			\sup_{x_1\in \mathbb{R}^d}& \dfrac{|\frac{\delta U}{\delta m}(\tilde{\mu},x_1)-\frac{\delta U}{\delta m}(\mu,x_1)|}{1+|x_1|^\frac{\ell}{2}}\\
			&\leq n  \sup_{x_1\in \mathbb{R}^d}\dfrac{\int_{(\mathbb{R}^{d})^{2(n-1)}} \left| \phi(x_1,x_2,\cdots,x_n)- \phi(x_1,s_2,\cdots,s_n)\right| \prod_{i=2}^{n}\pi(dx_i,ds_i)}{1+|x_1|^\frac{\ell}{2}}  \\
			&+n \int_{(\mathbb{R}^{d})^{2(n-1)}} \left| \phi(0,x_2,\cdots,x_n)- \phi(0,s_2,\cdots,s_n)\right| \prod_{i=2}^{n}\pi(dx_i,ds_i)\\
			&\leq 2n  \sup_{x_1\in \mathbb{R}^d}\dfrac{\int_{(\mathbb{R}^{d})^{2(n-1)}} \left| \phi(x_1,x_2,\cdots,x_n)- \phi(x_1,s_2,\cdots,s_n)\right| \prod_{i=2}^{n}\pi(dx_i,ds_i)}{1+|x_1|^\frac{\ell}{2}}. \label{termine_conv}
		\end{align} 
	Let $\epsilon >0$. By observing that for $\left|y\right|\geq \frac{1}{\epsilon}$, $\frac{1}{\left|y\right|}\leq \epsilon$, let us define $\tilde{M}_\epsilon:=\max( \frac{1}{\epsilon}, M_{\epsilon})$. The function $\phi$ is uniformly continuous on $B=\left\lbrace x\in(\mathbb{R}^d)^{n}: \left\| x\right\|_\infty\leq \tilde{M}_\epsilon\right\rbrace$. Therefore $\exists \eta_{\epsilon,\tilde{M}_\epsilon}>0$ such that for $x, \tilde{x} \in B$ satisfying $\max_{2\leq i \leq n}|x_i - \tilde{x}_i|\leq \eta_{\epsilon,\tilde{M}_\epsilon}$,
	
	\begin{equation}\label{unifcontinuity}
		\left|\phi(x_1,x_2,\cdots,x_n)-\phi(x_1,\tilde{x}_2,\cdots,\tilde{x}_n) \right|\leq \epsilon. 
	\end{equation}
 If we denote the vectors  $(x_2,\cdots,x_n)$ and $(s_2,\cdots,s_n)$  respectively by $x_{2:n}$ and $s_{2:n}$, it is possible to rewrite the integral in (\ref{termine_conv}) in the following way ($\star$)
\begin{align*}
	\int_{}& \left| \phi(x_1,x_{2:n})- \phi(x_1,s_{2:n})\right|1_{\left\|(x_1,x_{2:n})\right\|_\infty\leq \tilde{M}_\epsilon }1_{\left\|(x_1,s_{2:n})\right\|_\infty\leq \tilde{M}_\epsilon}1_{\left\| x_{2:n} - s_{2:n}\right\|_\infty \leq \eta_{\epsilon,\tilde{M}_\epsilon}} \prod_{i=2}^{n}\pi(dx_i,ds_i)\\
	& \phantom{=}+2\int_{} \left| \phi(x_1,x_{2:n})\right|1_{\left\|(x_1,x_{2:n})\right\|_\infty\leq \tilde{M}_\epsilon }1_{\left\|(x_1,s_{2:n})\right\|_\infty\leq \tilde{M}_\epsilon}1_{\left\| x_{2:n} - s_{2:n}\right\|_\infty> \eta_{\epsilon,\tilde{M}_\epsilon}}\prod_{i=2}^{n}\pi(dx_i,ds_i)\\
	&\phantom{=}+\int_{}\left| \phi(x_1,x_{2:n})\right|1_{\left\|(x_1,x_{2:n})\right\|_\infty\leq \tilde{M}_\epsilon }1_{\left\|(x_1,s_{2:n})\right\|_\infty> \tilde{M}_\epsilon}\prod_{i=2}^{n}\pi(dx_i,ds_i)\\
	&\phantom{=}+\int_{}\left| \phi(x_1,s_{2:n})\right|1_{\left\|(x_1,x_{2:n})\right\|_\infty\leq \tilde{M}_\epsilon }1_{\left\|(x_1,s_{2:n})\right\|_\infty> \tilde{M}_\epsilon}\prod_{i=2}^{n}\pi(dx_i,ds_i)\\
	&\phantom{=}+\int_{}\left| \phi(x_1,x_{2:n})\right| 1_{\left\|(x_1,x_{2:n})\right\|_\infty> \tilde{M}_\epsilon } \prod_{i=2}^{n}\pi(dx_i,ds_i)+\int_{}\left| \phi(x_1,s_{2:n})\right| 1_{\left\|(x_1,x_{2:n})\right\|_\infty> \tilde{M}_\epsilon } \prod_{i=2}^{n}\pi(dx_i,ds_i).
\end{align*} 

Let us now study one by one the terms appearing in the above expression ($\star$).

\begin{enumerate}
	\item  By (\ref{unifcontinuity})
	$$ \int_{} \left| \phi(x_1,x_{2:n})- \phi(x_1,s_{2:n})\right|1_{\left\|(x_1,x_{2:n})\right\|_\infty\leq \tilde{M}_\epsilon }1_{\left\|(x_1,s_{2:n})\right\|_\infty\leq \tilde{M}_\epsilon}1_{\left\| x_{2:n} - s_{2:n}\right\|_\infty \leq \eta_{\epsilon,\tilde{M}_\epsilon}} \prod_{i=2}^{n}\pi(dx_i,ds_i) \leq \epsilon. $$
	\item By the continuity of $\phi$ and Markov's inequality
	\begin{align*}
2&\int_{} \left| \phi(x_1,x_{2:n})\right|1_{\left\|(x_1,x_{2:n})\right\|_\infty\leq \tilde{M}_\epsilon }1_{\left\|(x_1,s_{2:n})\right\|_\infty\leq \tilde{M}_\epsilon}1_{\left\| x_{2:n} - s_{2:n}\right\|_\infty> \eta_{\epsilon,\tilde{M}_\epsilon}}\prod_{i=2}^{n}\pi(dx_i,ds_i)\\
&\leq 2\cdot\sup_{ \left\|x\right\|_\infty\leq \tilde{M}_\epsilon }\left| \phi(x)\right|\cdot\sum_{i=2}^{n}\int_{}1_{|x_i - s_i|> \eta_{\epsilon,\tilde{M}_\epsilon}}\pi(dx_i,ds_i)\leq \frac{2 }{\eta_{\epsilon,\tilde{M}_\epsilon}^\ell}\cdot\sup_{ \left\|x\right\|_\infty\leq \tilde{M}_\epsilon}\left| \phi(x)\right|\cdot \sum_{i=2}^{n}\int_{}|x_i - s_i|^\ell\pi(dx_i,ds_i).
	\end{align*}
\item By  (\ref{estimation}), Markov's inequality, Cauchy-Schwarz inequality and the fact that $\frac{1}{\tilde{M}_\epsilon}\leq \epsilon$ 
\begin{align*}
	\int_{}&\left| \phi(x_1,x_{2:n})\right|1_{\left\|(x_1,x_{2:n})\right\|_\infty\leq \tilde{M}_\epsilon }1_{\left\|(x_1,s_{2:n})\right\|_\infty> \tilde{M}_\epsilon}\prod_{i=2}^{n}\pi(dx_i,ds_i) \leq \sum_{i=2}^{n}\int_{}(C+\prod_{j=1}^{n} (1+|x_j|^\frac{\ell}{2})) 1_{|s_i| > \tilde{M}_\epsilon}\prod_{j=2}^{n}\pi(dx_j,ds_j) \\
	&\leq\epsilon^\ell(n-1)C\int_{}|s|^\ell \mu(ds)+\epsilon^\frac{\ell}{2}(n-1)(1+|x_1|^\frac{\ell}{2})\left(\int_{}(1+|x|^\frac{\ell}{2})^2\tilde{\mu}(dx) \right)^\frac{n-1}{2}\left( \int |s|^\ell \mu(ds)\right) ^\frac{1}{2}. 
\end{align*}
\item Similarly to the previous point 
\begin{align*}
	\int_{}&\left| \phi(x_1,s_{2:n})\right|1_{\left\|(x_1,x_{2:n})\right\|_\infty\leq \tilde{M}_\epsilon }1_{\left\|(x_1,s_{2:n})\right\|_\infty> \tilde{M}_\epsilon}\prod_{i=2}^{n}\pi(dx_i,ds_i)\\
	&\leq \sum_{i=2}^{n}\int_{}(C+(1+|x_1|^\frac{\ell}{2})\prod_{j=2}^{n} (1+|s_j|^\frac{\ell}{2})) 1_{|s_i| > \tilde{M}_\epsilon}\prod_{j=2}^{n}\pi(dx_j,ds_j)\\
	&\leq \epsilon^\ell(n-1)C\int|s|^\ell \mu(ds)+\epsilon^\frac{\ell}{2}(n-1)(1+|x_1|^\frac{\ell}{2})\left(\int_{}(1+|s|^\frac{\ell}{2})^2\mu(ds) \right)^\frac{n-1}{2}\left( \int |s|^\ell \mu(ds)\right) ^\frac{1}{2}
\end{align*}
\item  Finally by (\ref{deflimite})
\begin{align*}
	\int_{}&\left| \phi(x_1,x_{2:n})\right| 1_{\left\|(x_1,x_{2:n})\right\|_\infty> \tilde{M}_\epsilon } \prod_{i=2}^{n}\pi(dx_i,ds_i)+\int_{}\left| \phi(x_1,s_{2:n})\right| 1_{\left\|(x_1,x_{2:n})\right\|_\infty> \tilde{M}_\epsilon } \prod_{i=2}^{n}\pi(dx_i,ds_i)\\
	&\leq \epsilon(1+|x_1|^\frac{\ell}{2})\left( \int_{}\left( 1+|x|^\frac{\ell}{2}\right) \tilde{\mu}(dx)\right)^{n-1} +\epsilon(1+|x_1|^\frac{\ell}{2})\left( \int_{}\left( 1+|s|^\frac{\ell}{2}\right) \mu(ds)\right)^{n-1}.
\end{align*}
\end{enumerate}
 Therefore plugging all these estimations in (\ref{termine_conv}), choosing $\pi$ as the optimal $W_\ell$ coupling between $\tilde{\mu}$ and $\mu$, we obtain
 \begin{align*}
 	\sup_{x_1\in \mathbb{R}^d}& \dfrac{|\frac{\delta U}{\delta m}(\tilde{\mu},x_1)-\frac{\delta U}{\delta m}(\mu,x_1)|}{1+|x_1|^\frac{\ell}{2}}\\
 	&\leq D \left( \sup_{ \left\|x\right\|_\infty\leq \tilde{M}_\epsilon }\left| \phi(x)\right|\cdot  \frac{W^{\ell\wedge 1}_\ell(\tilde{\mu},\mu)}{\eta_{\epsilon,\tilde{M}_\epsilon}^\ell} + (\epsilon+\epsilon^\frac{\ell}{2}+\epsilon^\ell)\left( \left(\int_{}(1+|x|^\frac{\ell}{2})^2\tilde{\mu}(dx) \right)^\frac{n-1}{2}+1 \right)\right) 
 \end{align*} 
for a positive constant $D$ depending neither on $\epsilon$ nor on $\tilde{\mu}$.
 For fixed $\epsilon$, let $W_\ell(\tilde{\mu},\mu)$ converge to $0$. Then $\int_{}| x|^\ell\tilde{\mu}(dx)$  converges to  $\int_{}| s|^\ell\mu(ds)$ and $\int_{}(1+|x|^\frac{\ell}{2})^2\tilde{\mu}(dx)$  converges to $\int_{}(1+|s|^\frac{\ell}{2})^2\mu(ds)$. It is then possible to conclude that the left-hand side goes to $0$ by letting $\epsilon$ go to $0$.
 
 \item[\textbf{RU6.}] Let $\mu_1,\mu_2\in B(\mu,r)$.  By (\ref{estimation}) we have
\begin{align*}
	\sup_{x_1\in \mathbb{R}^d}&
	\frac{\left| \dfrac{\delta U}{\delta m}(\mu_2,x_1)-\dfrac{\delta U}{\delta m}(\mu_1,x_1)\right| }{1+|x_1|^{\frac{\ell}{2}}}\\
	&\leq 2n\sup_{x_1\in \mathbb{R}^d} \frac{ \int_{(\mathbb{R}^{d})^{n-1}} \left| \phi(x_1,x_2,\cdots,x_n)\right|\left| \mu_2(dx_2)\cdots\mu_2(dx_n)-\mu_1(dx_2)\cdots\mu_1(dx_n)\right|  }{1+|x_1|^{\frac{\ell}{2}}}\\
	&\leq 2n\sup_{x_1\in \mathbb{R}^d} \frac{\sum_{k=2}^{n} \int_{(\mathbb{R}^{d})^{n-1}} \left(C+\prod_{i=1}^{n}(1+|x_i|^\frac{\ell}{2}) \right) \mu_2(dx_{k+1})\cdots\mu_2(dx_n)\left| \mu_2-\mu_1\right|(dx_k) \mu_1(dx_2)\cdots\mu_1(dx_{k-1}) }{1+|x_1|^{\frac{\ell}{2}}}\\
	&\leq \tilde{C}\int_{\mathbb{R}^{d}}(1+|x|^\frac{\ell}{2})\left| \mu_2-\mu_1\right|(dx)
\end{align*}
for a constant $\tilde{C}<\infty$ not depending on $(\mu_1,\mu_2)$.

	\end{itemize}
 
 	\end{ex} 
	\section{Proof of the Results}	
	In this section we will provide the proofs of Theorem \ref{independent_case} and Theorem \ref{main_teo}. We will start by proving the Markov chains case since it is more complex and then we will provide the proof of the independent and non-equidistributed case where the common parts, where possible, are not repeated.
	\subsection{Markov Chains}
	In the proof of the theorem we will need that the integral of $V$ respect to $\nu_1$ is finite. The next Lemma permits to suppose it in the proof.
	\begin{lemma} \label{int_V}
		If Theorem \ref{main_teo} holds for each $\nu_1 \in \mathcal{P}(\mathbb{R}^d)$ such that $\nu_1(V)<\infty$, then it holds for all $\nu_1 \in \mathcal{P}(\mathbb{R}^d)$.
	\end{lemma}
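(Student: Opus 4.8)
The plan is to reduce the general initial law $\nu_1$ to one integrating $V$ by a standard one–step decomposition of the Markov chain. The key observation is that the CLT statement concerns only the \emph{distributional} limit of $\sqrt N(U(\mu_N)-U(\mu))$, and that $\mu_N=\frac1N\sum_{i=1}^N\delta_{X_i}$ differs negligibly (at the $\sqrt N$ scale) from $\frac1N\sum_{i=2}^{N}\delta_{X_i}$, which — conditionally on $X_1=x$ — is built from a Markov chain started at $X_2$ whose law is $P(x,\cdot)$.

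First I would fix an arbitrary $\nu_1\in\mathcal P(\mathbb R^d)$ and condition on $X_1$. Under $\mathbb P_{\nu_1}$, given $X_1=x$, the sequence $(X_i)_{i\ge 2}$ is a Markov chain with transition kernel $P$ and initial law $P(x,\cdot)$. By Hypothesis \textbf{L\ref{D1}}, $P(x,\cdot)(V)=PV(x)\le \gamma V(x)+K<\infty$ for every $x\in\mathbb R^d$, so the shifted chain has an initial law integrating $V$; hence, by the assumed validity of Theorem \ref{main_teo} in that case (and invariance of $\mu$ and of the Poisson solution $F(\mu,\cdot)$ under the time shift),
$$\sqrt{N-1}\left(U\!\left(\tfrac{1}{N-1}\sum_{i=2}^{N}\delta_{X_i}\right)-U(\mu)\right)\ \Longrightarrow\ \mathcal N\!\left(0,\ \mu\big(F^2(\mu,\cdot)\big)-\mu\big((PF)^2(\mu,\cdot)\big)\right)$$
conditionally on $X_1=x$, for every $x$. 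Since the limit law does not depend on $x$, the same convergence holds unconditionally under $\mathbb P_{\nu_1}$ (e.g.\ by dominated convergence applied to characteristic functions: $\mathbb E_{\nu_1}[e^{it\sqrt{N-1}(U(\cdots)-U(\mu))}]=\int \mathbb E_x[\cdots]\,\nu_1(dx)\to e^{-t^2\Sigma/2}$).

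It then remains to check that replacing $N-1$ by $N$ and $\frac1{N-1}\sum_{i=2}^N\delta_{X_i}$ by $\mu_N$ does not change the limit. For the index, $\sqrt N/\sqrt{N-1}\to1$. For the empirical measure, write $\widetilde\mu_{N}:=\frac1{N-1}\sum_{i=2}^N\delta_{X_i}$, so that $\mu_N=\frac{N-1}{N}\widetilde\mu_N+\frac1N\delta_{X_1}$, i.e.\ $\mu_N-\widetilde\mu_N=\frac1N(\delta_{X_1}-\widetilde\mu_N)$. By the LLN (Theorem \ref{llnmarkov}), $W_\ell(\widetilde\mu_N,\mu)\to 0$ a.s., so eventually $\widetilde\mu_N$ and $\mu_N$ lie in $B(\mu,r)$; applying Lemma \ref{linearderivative} on the segment between them and the growth bound \textbf{RU\ref{growth2}} (valid since the segment stays in $B(\mu,r)$, using that $\mu_N-\widetilde\mu_N=\frac1N(\delta_{X_1}-\widetilde\mu_N)$ has total mass $O(1/N)$) gives
$$\left|U(\mu_N)-U(\widetilde\mu_N)\right|\le \frac{C}{N}\left(1+|X_1|^{\ell/2}+\int_{\mathbb R^d}(1+|x|^{\ell/2})\,\widetilde\mu_N(dx)\right),$$
and the bracket is $O(1)$ a.s.\ because $\int|x|^{\ell/2}\widetilde\mu_N(dx)\to\int|x|^{\ell/2}\mu(dx)<\infty$ by Theorem \ref{llnmarkov} and \eqref{l_moment_finite}. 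Hence $\sqrt N(U(\mu_N)-U(\widetilde\mu_N))\to 0$ a.s., and Slutsky's lemma concludes.

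The main obstacle is a bookkeeping one rather than a conceptual one: making sure the growth/Lipschitz hypothesis is invoked with an admissible moment exponent ($\ell/2$ rather than $\ell$, as in \textbf{RU\ref{growth2}}) so that the $O(1/N)$ gain from the perturbation $\frac1N(\delta_{X_1}-\widetilde\mu_N)$ is not eaten up by a diverging moment of $\widetilde\mu_N$, and checking that the segment $s\mapsto (1-s)\widetilde\mu_N+s\mu_N$ genuinely remains in the Wasserstein ball $B(\mu,r)$ where $U$ admits its linear functional derivative — which again follows from Theorem \ref{llnmarkov} since both endpoints converge to $\mu$ in $W_\ell$. One should also be slightly careful that the conditional statement ``Theorem \ref{main_teo} holds for the shifted chain started at $P(x,\cdot)$'' is exactly the hypothesis being assumed, applied with initial law $P(x,\cdot)$, and that $\mu$ is still its (unique) invariant measure — which is immediate since $P$ is unchanged.
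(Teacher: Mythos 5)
Your argument is correct, but it takes a genuinely different route from the paper. The paper never touches the functional $U$ or the empirical measure in this lemma: it truncates the initial law, defining $\nu_1^K$ as the image of $\nu_1$ under $\phi_K(x)=x1_{\{V(x)\le K\}}+x_01_{\{V(x)>K\}}$, so that $\nu_1^K(V)\le K$ while $d_{TV}(\nu_1,\nu_1^K)\le \nu_1(\{V>K\})$; since the kernel is unchanged, $d_{TV}(\mathbb{P}_{\nu_1},\mathbb{P}_{\nu_1^K})=d_{TV}(\nu_1,\nu_1^K)$, and a three-term estimate with bounded continuous test functions, first $\limsup_{N}$ then $K\to\infty$, transfers the CLT from $\nu_1^K$ to $\nu_1$, using only that the Gaussian limit does not depend on the initial law. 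You instead condition on $X_1$: the shifted chain has initial law $P(x,\cdot)$, which integrates $V$ by \textbf{L\ref{D1}}, so the assumed statement applies conditionally for every $x$, and dominated convergence of characteristic functions removes the conditioning; you then must additionally check that replacing $\frac{1}{N-1}\sum_{i=2}^N\delta_{X_i}$ by $\mu_N$ is negligible at scale $\sqrt N$, which you do correctly via Lemma \ref{linearderivative}, \textbf{RU\ref{functional_derivative2}}--\textbf{RU\ref{growth2}}, the convexity of the $W_\ell$-ball along mixtures, and Theorem \ref{llnmarkov} (the eventual-containment in $B(\mu,r)$ for $N\ge N^*(\omega)$ being enough for a.s.\ convergence), finishing with Slutsky. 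The paper's coupling argument is lighter — it uses no regularity of $U$, not even \textbf{L\ref{D1}}, only uniqueness of the limit — whereas yours exploits the Markov property and the drift condition and yields the slightly stronger conditional-on-$X_1$ convergence, at the cost of re-invoking the RU hypotheses and the law of large numbers in a lemma the paper keeps purely measure-theoretic.
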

	
	\begin{proof}
		If $\nu_1(V)=\infty$, for $K> \inf_{x\in \mathbb{R}^d}V(x)$ let us consider $x_0\in \mathbb{R}^d$ such that $V\left(x_0\right)\leq K$ and define $\phi_K(x)= x1_{\left\lbrace V(x)\leq K\right\rbrace }+x_01_{\left\lbrace V(x)> K\right\rbrace }$.\\
		If we now consider the measure $\nu^K_1(A):= \nu_1(\phi_K^{-1}(A))$ for $A\in \mathcal{B}\left(\mathbb{R}^d \right)$, then 
		
		$$ \nu^K_1(V) = \int_{\mathbb{R}^d}V(x)\nu^K_1(dx) =\int_{\mathbb{R}^d}V(\phi_K(x))\nu_1(dx)=\int_{\mathbb{R}^d}\left( V(x)1_{\left\lbrace V(x)\leq K \right\rbrace}+V(x_0)1_{\left\lbrace V(x)> K \right\rbrace}\right)\nu_1(dx)\leq K. $$
		Moreover 
		
		\begin{align*}
			d_{TV}\left( \nu_1^K,\nu_1\right)&= \sup_{\psi:\left\| \psi\right\|_\infty\leq \frac{1}{2}}\left|\int_{\mathbb{R}^d}\psi(x)\nu_1^K(dx)-\int_{\mathbb{R}^d}\psi(x)\nu_1(dx)\right|\\
			&= \sup_{\psi:\left\| \psi\right\|_\infty\leq \frac{1}{2}}\left|\int_{\mathbb{R}^d}\left( \psi(x_0)-\psi(x)\right)1_{\left\lbrace V(x)>K \right\rbrace} \nu_1(dx)\right| \leq \nu_1\left( \left\lbrace x:V(x)>K \right\rbrace \right).
		\end{align*}

		Let us consider now the Markov chain with initial law $\nu_1^K$ and we denote by $\mathbb{P}_{\nu_1^K}$ the law of the process ($\mathbb{P}_{\nu_1}$ will denote the law of the original Markov chain). Since the initial law of a Markov chain determines the law of the entire process, one has that $d_{TV}\left(\mathbb{P}_{\nu_1},\mathbb{P}_{\nu_1^K}\right)=d_{TV}\left(\nu_1,\nu_1^K \right)$. \\
		Therefore if we are able to prove that for all bounded, continuous functions $f$ 
		$$\lim_{N\to\infty}\mathbb{E}_{\nu_1^K}\left(f\left(\sqrt{N}\left(U(\mu_N)-U(\mu) \right)  \right)  \right)   = \mathbb{E}\left(f(\mathcal{G}) \right)$$
		where $\mathcal{G}$ is a Gaussian random variable not depending on $K$, then
		
		\begin{align*}
			&\left| \mathbb{E}_{\nu_1}\left(f\left(\sqrt{N}\left(U(\mu_N)-U(\mu) \right)  \right)  \right) - \mathbb{E}\left(f(\mathcal{G}) \right)\right| \\
			&\phantom{\mathbb{E}_{\nu_1}}\leq \left| \mathbb{E}_{\nu_1}\left(f\left(\sqrt{N}\left(U(\mu_N)-U(\mu) \right)  \right)  \right) - \mathbb{E}_{\nu_1^K}\left(f\left(\sqrt{N}\left(U(\mu_N)-U(\mu) \right)  \right)  \right)\right|\\
			&\phantom{\mathbb{E}_{\nu_1}\leq}+ \left|  \mathbb{E}_{\nu_1^K}\left(f\left(\sqrt{N}\left(U(\mu_N)-U(\mu) \right)  \right)  \right)- \mathbb{E}\left(f(\mathcal{G}) \right)\right|\\
			&\phantom{\mathbb{E}_{\nu_1}}\leq 2\left\|f\right\|_\infty d_{TV}\left(\mathbb{P}_{\nu_1},\mathbb{P}_{\nu_1^K} \right)+ \left|  \mathbb{E}_{\nu_1^K}\left(f\left(\sqrt{N}\left(U(\mu_N)-U(\mu) \right)  \right)  \right)- \mathbb{E}\left(f(\mathcal{G}) \right)\right|\\
			&\phantom{\mathbb{E}_{\nu_1}}=2\left\|f\right\|_\infty d_{TV}\left(\nu_1,\nu_1^K \right)+ \left|  \mathbb{E}_{\nu_1^K}\left(f\left(\sqrt{N}\left(U(\mu_N)-U(\mu) \right)  \right)  \right)- \mathbb{E}\left(f(\mathcal{G}) \right)\right|.
		\end{align*}

		Since $d_{TV}\left(\nu_1,\nu_1^K \right)\leq\nu_1\left( \left\lbrace x:V(x)>K \right\rbrace \right)$, we can first take the superior limit over $N\rightarrow \infty$ so to obtain that
		$$ \limsup_{N\rightarrow \infty}\left| \mathbb{E}_{\nu_1}\left(f\left(\sqrt{N}\left(U(\mu_N)-U(\mu) \right)  \right)  \right) - \mathbb{E}\left(f(\mathcal{G}) \right)\right|\leq 2\left\|f\right\|_\infty\nu_1\left( \left\lbrace x:V(x)>K \right\rbrace \right).$$
		
		Then we can conclude that the left hand-side is equal to $0$ by letting $K$ to infinity.
	\end{proof}
	
	The proof of the theorem relies on the Poisson equation whose explicit solution and norm estimations are studied in the following Lemma.

	\begin{lemma} 
		Under the assumptions of the previous theorem, for a given $m\in B(\mu,r)$ let us consider the following Poisson equation
		
		\begin{equation} \label{poisson_trm}
			F(m,x) - PF(m,x) = \frac{\delta U}{\delta m}\left(m,x \right)-\int_{\mathbb{R}^d} \frac{\delta U}{\delta m}\left(m,y \right)\mu(dy), \quad x\in \mathbb{R}^d.
		\end{equation}
		Then (\ref{poisson_trm}) admits a solution given by $F(m,\cdot) =\sum_{n\in \mathbb{N}}(P^n\frac{\delta U}{\delta m}\left(m,\cdot \right)-\mu(\frac{\delta U}{\delta m}\left(m,\cdot \right)))$.\\
		Moreover 
		\begin{itemize}
			\item For $m\in B(\mu,r)$ \begin{equation} \label{norm_F}
				\left\|F\left(m,\cdot \right)  \right\|_{\sqrt{V},1}\leq \bar{C}_\frac{1}{2}
			\end{equation}
			
			\item For $m_1, m_2 \in B(\mu,r)$
			
			\begin{equation} \label{norm_diff_F_1}
				\left\|F\left(m_1,\cdot \right) - F\left(m_2,\cdot \right) \right\|_{\sqrt{V},1}  \leq \bar{C}_\frac{1}{2} \sup_{x\in \mathbb{R}^d} \frac{\left|\frac{\delta U}{\delta m} \left(m_1,x \right) -\frac{\delta U}{\delta m} \left(m_2,x \right)  \right| }{1+ \sqrt{V(x)}}
			\end{equation}
			
			and so for $\alpha \in (\frac{1}{2},1]$ 
			\begin{equation} \label{norm_diff_F}
				\left\|F\left(m_1,\cdot \right) - F\left(m_2,\cdot \right) \right\|_{\sqrt{V},1}  \leq\bar{C}_\frac{1}{2}\left( \int_{\mathbb{R}^d}(1+|y|^{\frac{\ell}{2\alpha}}) |m_2-m_1|(dy) \right)^\alpha 
			\end{equation}
			with $\bar{C}_\frac{1}{2}$ a finite constant not depending on $m$, $m_1$ and $m_2$.
		\end{itemize}
	\end{lemma}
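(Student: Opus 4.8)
The plan is to derive every assertion from Proposition~\ref{poisson_sqrt}, applied to $f:=\frac{\delta U}{\delta m}(m,\cdot)$ (and, for the difference estimates, to a difference of two such functions). The preliminary task is therefore to verify the hypothesis $f^{2}\in\mathcal{V}_{V}$ of that proposition together with bounds that are uniform over $m\in B(\mu,r)$. I would start from the elementary consequence of \textbf{L3} that $|x|^{\ell/2}=\sqrt{|x|^{\ell}}\le\sqrt{C_{\ell}(1+V(x))}\le\sqrt{C_{\ell}}\,(1+\sqrt{V(x)})$, hence $1+|x|^{\ell/2}\le(1+\sqrt{C_{\ell}})(1+\sqrt{V(x)})$ for every $x$. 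Fixing once and for all some $\beta\in\bigl(0,\min(1,\rho/\sqrt{K})\bigr)$ as permitted in Proposition~\ref{poisson_sqrt}, and combining the displayed inequality with \textbf{RU\ref{growth2}}, whose constant $C$ does not depend on the measure chosen in $B(\mu,r)$, one gets $|f(x)|\le C(1+\sqrt{C_{\ell}})(1+\sqrt{V(x)})$; thus $\|f\|_{\sqrt{V},\beta}$ is bounded by a finite constant independent of $m$, and $f^{2}(x)\le 2C^{2}\bigl(1+C_{\ell}(1+V(x))\bigr)$ shows $f^{2}\in\mathcal{V}_{V}$.

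Proposition~\ref{poisson_sqrt} then provides the existence of the announced solution $F(m,\cdot)=\sum_{n\in\mathbb{N}}\bigl(P^{n}\tfrac{\delta U}{\delta m}(m,\cdot)-\mu(\tfrac{\delta U}{\delta m}(m,\cdot))\bigr)$ of \eqref{poisson_trm}, its convergence in $\|\cdot\|_{\sqrt{V},1}$, the property $F^{2}\in\mathcal{V}_{V}$, and the geometric estimate $\sup_{x}\frac{|P^{n}f(x)-\mu(f)|}{1+\sqrt{V(x)}}\le D\chi^{n}\|f\|_{\sqrt{V},\beta}$ with $\chi\in(0,1)$ and $D<\infty$ both fixed by the kernel $P$ and $\beta$. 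Summing this bound gives $\|F(m,\cdot)\|_{\sqrt{V},1}\le\sum_{n}\sup_{x}\frac{|P^{n}f(x)-\mu(f)|}{1+\sqrt{V(x)}}\le\frac{D}{1-\chi}\|f\|_{\sqrt{V},\beta}$, and the uniform bound on $\|f\|_{\sqrt{V},\beta}$ from the first step turns this into \eqref{norm_F}.

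For \eqref{norm_diff_F_1} I would exploit the linearity of the map $f\mapsto\sum_{n}(P^{n}f-\mu(f))$: with $g:=\frac{\delta U}{\delta m}(m_{1},\cdot)-\frac{\delta U}{\delta m}(m_{2},\cdot)$ one has $F(m_{1},\cdot)-F(m_{2},\cdot)=\sum_{n}(P^{n}g-\mu(g))$, and since $g^{2}\le 2(\tfrac{\delta U}{\delta m}(m_{1},\cdot))^{2}+2(\tfrac{\delta U}{\delta m}(m_{2},\cdot))^{2}\in\mathcal{V}_{V}$, Proposition~\ref{poisson_sqrt} applies to $g$ as well, yielding $\|F(m_{1},\cdot)-F(m_{2},\cdot)\|_{\sqrt{V},1}\le\frac{D}{1-\chi}\|g\|_{\sqrt{V},\beta}\le\frac{D}{(1-\chi)\beta}\|g\|_{\sqrt{V},1}$, where the last step uses $1+\beta\sqrt{V}\ge\beta(1+\sqrt{V})$ because $\beta\le1$; this is \eqref{norm_diff_F_1}. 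Then \eqref{norm_diff_F} follows by bounding $\|g\|_{\sqrt{V},1}=\sup_{x}\frac{|g(x)|}{1+\sqrt{V(x)}}$ through \textbf{RU\ref{growth_diff}} and the \textbf{L3} inequality: $|g(x)|\le C(1+|x|^{\ell/2})\bigl(\int(1+|y|^{\ell/(2\alpha)})|m_{2}-m_{1}|(dy)\bigr)^{\alpha}\le C(1+\sqrt{C_{\ell}})(1+\sqrt{V(x)})\bigl(\int(1+|y|^{\ell/(2\alpha)})|m_{2}-m_{1}|(dy)\bigr)^{\alpha}$. Throughout, $\bar{C}_{1/2}$ is taken to be the largest of the finitely many constants produced.

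The computations are routine constant-chasing; the only point that genuinely requires care is the uniformity of every constant over $B(\mu,r)$, which holds precisely because \textbf{RU\ref{growth2}} and \textbf{RU\ref{growth_diff}} are postulated with measure-independent constants, while $C_{\ell}$ and the Lyapunov/Poisson constants $\rho,K,\gamma,R,\beta,D,\chi$ are attached to the fixed kernel $P$ and not to $m,m_{1},m_{2}$. A secondary subtlety is that Proposition~\ref{poisson_sqrt} is invoked for $g$, which is a difference of linear functional derivatives and need not itself be a linear functional derivative of any functional; this is harmless, since that proposition only requires the square of its argument to lie in $\mathcal{V}_{V}$.
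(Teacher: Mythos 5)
Your proposal is correct and follows essentially the same route as the paper's proof: verify $\bigl(\tfrac{\delta U}{\delta m}(m,\cdot)\bigr)^2\in\mathcal{V}_V$ via \textbf{RU\ref{growth2}} and \textbf{L3}, invoke Proposition~\ref{poisson_sqrt}, sum the geometric estimate with a fixed $\beta\in(0,\rho/\sqrt K)$, and treat the difference by applying the same scheme to $g=\tfrac{\delta U}{\delta m}(m_1,\cdot)-\tfrac{\delta U}{\delta m}(m_2,\cdot)$ together with \textbf{RU\ref{growth_diff}}. The only cosmetic difference is that you deduce \eqref{norm_diff_F} from \eqref{norm_diff_F_1} while the paper bounds it directly, and your attention to the uniformity of the constants and to the legitimacy of applying the proposition to $g$ matches the paper's (implicit) reasoning.
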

	
	\begin{proof}
		By Proposition \ref{poisson_sqrt}, to ensure the existence of the solution, it is sufficient to check that $\left( \frac{\delta U}{\delta m}\left(m,\cdot \right)\right) ^2 \in \mathcal{V}_{V}.$
		By Hypothesis \textbf{RU\ref{growth2}} and  \textbf{L3}
		
		$$\sup_{x\in \mathbb{R}^d} \frac{\left(  \frac{\delta U}{\delta m}\left(m,x \right)\right) ^2 }{1+V(x)}\leq 2C^2\sup_{x\in \mathbb{R}^d} \frac{1+|x|^\ell}{1+V(x)} \leq 2C^2\left(1+C_\ell \right) < \infty. $$
		
		Let us now estimate the norm $\left\| \right\|_{\sqrt{V},1} $ of $F(m,\cdot)$. By Proposition \ref{poisson_sqrt}, one has
		\begin{align*}
			\left\|F\left(m,\cdot \right)  \right\|_{\sqrt{V},1} &= \lim_{n\rightarrow \infty} \left\|\sum_{k=0}^{n} P^k\frac{\delta U}{\delta m}\left(m,\cdot \right)-\mu(\frac{\delta U}{\delta m}\left(m,\cdot \right)) \right\|_{\sqrt{V},1}\\
			&\leq  \sum_{k=0}^{\infty}\left\| P^k\frac{\delta U}{\delta m}\left(m,\cdot \right)-\mu(\frac{\delta U}{\delta m}\left(m,\cdot \right)) \right\|_{\sqrt{V},1}.		 
		\end{align*}
		
		Let $\beta\in \left( 0,\frac{\rho}{\sqrt{K}}\right)$, then again by Proposition \ref{poisson_sqrt} and Hypothesis \textbf{RU\ref{growth2}}
		\begin{align*}
			\left\| P^k\frac{\delta U}{\delta m}\left(m,\cdot \right)-\mu(\frac{\delta U}{\delta m}\left(m,\cdot \right)) \right\|_{\sqrt{V},1}&= \sup_{x\in \mathbb{R}^d}\dfrac{\left| P^k\frac{\delta U}{\delta m}\left(m,x \right)-\mu(\frac{\delta U}{\delta m}\left(m,\cdot \right))\right| }{1+\sqrt{V(x)}}\leq D\chi^k \sup_{x\in \mathbb{R}^d}\frac{\left| \frac{\delta U}{\delta m}\left(m,x \right)\right| }{1+\beta \sqrt{V(x)}}\\
			& \leq D\chi^k C\left( 1+ \sup_{x\in \mathbb{R}^d}\frac{|x|^\frac{\ell}{2}}{1+\beta \sqrt{V(x)}} \right)\leq DC \chi^k \left(1+ \frac{\sqrt{C_\ell} }{\min(\beta,1)} \right)			
		\end{align*}

		where to obtain the last inequality we used \textbf{L3}. 
		
		Therefore we have obtained that 
		$$ \left\|F\left(m,\cdot \right)  \right\|_{\sqrt{V},1} \leq \frac{1}{1-\chi} DC \left(1+ \frac{\sqrt{C_\ell} }{\min(\beta,1)} \right). $$ 
		Finally let $m_1, m_2 \in B(\mu,r)$ and proceeding as above, by Proposition \ref{poisson_sqrt} we have
		
		$$\left\|F\left(m_1,\cdot \right) - F\left(m_2,\cdot \right) \right\|_{\sqrt{V},1} \leq  \sum_{k=0}^{\infty}\left\|P^k\left( \frac{\delta U}{\delta m}\left(m_1,\cdot \right)-\frac{\delta U}{\delta m}\left(m_2,\cdot \right) \right)- \mu\left(\frac{\delta U}{\delta m}\left(m_1,\cdot \right)-\frac{\delta U}{\delta m}\left(m_2,\cdot \right) \right)  \right\|_{\sqrt{V},1} $$
		
		Let $\beta \in (0,\frac{\rho}{\sqrt{K}})$, then  by Proposition \ref{poisson_sqrt} and Hypothesis \textbf{RU\ref{growth_diff}}\\
		
		\begin{align*}
			&\left\|P^k\left( \frac{\delta U}{\delta m}\left(m_1,\cdot \right)-\frac{\delta U}{\delta m}\left(m_2,\cdot \right) \right)- \mu\left(\frac{\delta U}{\delta m}\left(m_1,\cdot \right)-\frac{\delta U}{\delta m}\left(m_2,\cdot \right) \right)  \right\|_{\sqrt{V},1}\\
			&\phantom{P^k\quad\quad}= \sup_{x\in \mathbb{R}^d} \dfrac{\left| P^k\left( \frac{\delta U}{\delta m}\left(m_1,x \right)-\frac{\delta U}{\delta m}\left(m_2,x \right) \right)- \mu\left(\frac{\delta U}{\delta m}\left(m_1,\cdot \right)-\frac{\delta U}{\delta m}\left(m_2,\cdot \right) \right)\right| }{1+\sqrt{V(x)}}\\
			&\phantom{P^k\quad\quad}\leq D\chi^k \sup_{x\in \mathbb{R}^d}\frac{\left| \frac{\delta U}{\delta m}\left(m_1,x \right)-\frac{\delta U}{\delta m}\left(m_2,x \right)\right|} {1+\beta \sqrt{V(x)}}\\
			&\phantom{P^k\quad\quad}\leq DC\chi^k\sup_{x\in \mathbb{R}^d} \frac{1+|x|^{\frac{\ell}{2}}}{1+\beta \sqrt{V(x)}}\left( \int_{\mathbb{R}^d}(1+|y|^{\frac{\ell}{2\alpha}}) |m_2-m_1|(dy) \right)^\alpha\\ &\phantom{P^k\quad\quad}=DC\chi^k\left(1+\frac{\sqrt{C_\ell}}{\min\left(\beta,1\right) } \right) \left( \int_{\mathbb{R}^d}(1+|y|^{\frac{\ell}{2\alpha}}) |m_2-m_1|(dy) \right)^\alpha
		\end{align*}
		
		where we used  \textbf{L3} to obtain the last equality. Therefore we have
		
		$$ \left\|F\left(m_1,\cdot \right) - F\left(m_2,\cdot \right) \right\|_{\sqrt{V},1}\leq DC\frac{1}{1-\chi}\left(1+\frac{\sqrt{C_\ell}}{\min\left(\beta,1\right) } \right)\left( \int_{\mathbb{R}^d}(1+|y|^{\frac{\ell}{2\alpha}}) |m_2-m_1|(dy) \right)^\alpha. $$
	\end{proof}
	
	\begin{proof}[Proof of Theorem \ref{main_teo}]
		\textbf{FIRST STEP}\quad
		
		Let us preliminary recall that by Theorem \ref{llnmarkov},  $W_\ell\left(\mu_N, \mu\right) \underset{N}{\longrightarrow} 0$  a.s.
		To study the limit distribution of $\sqrt{N}\left(U\left(\mu_N \right)- U\left(\mu \right)\right)$, let us define for $i=1,\cdots,N$ and $s\in \left[ 0,1\right] $
		
		$$\mu_{N}^{i,s} = \frac{1}{N} \sum_{j=1}^{i-1}\delta_{X_j}+\frac{s}{N}\delta_{X_i}+\left(1+\frac{1-i-s}{N} \right)\mu.$$ 
		We have $\mu_{N}^{N,1}= \mu_{N} $ and
		$\mu_{N}^{1,0}=\mu$. Moreover, since for $i=1,\cdots,N-1$ 
		$$\mu_{N}^{i,1} = \mu_{N}^{i+1,0}, $$
		one has
		
		$$\sqrt{N}\left(U\left(\mu_N \right)- U\left(\mu\right)  \right) = \sqrt{N} \sum_{i=1}^{N}\left(U(\mu_{N}^{i,1}) -U(\mu_{N}^{i,0})  \right) .$$

		Let us now show that almost surely $\mu_{N}^{i,s}$  converges uniformly to $\mu$ with respect to the distance $W_\ell$ that is
		
		\begin{equation} \label{unif_conv}
			\max_{1\leq i\leq N} \sup_{s\in \left[0,1 \right]} W_\ell(\mu_{N}^{i,s},\mu) \underset{N\rightarrow\infty}{\rightarrow} 0 \quad a.s..
		\end{equation}

		For  $i=1,\cdots,N$ and $s\in \left[0,1 \right]$
		
		$$ \mu_{N}^{i,s} = s\mu_{N}^{i,1}+(1-s)\mu_{N}^{i-1,1}$$
		under the convention $\mu_{N}^{0,1}=\mu_{N}^{1,0}$. Let $\pi \in \Pi(\mu_{N}^{i,1},\mu)$ and $\tilde{\pi}\in \Pi(\mu_{N}^{i-1,1},\mu)$ where $\Pi(\mu_1,\mu_2)= \left\lbrace \mu \in \mathcal{P}(\mathbb{R}^{2d}): \mu(\cdot\times \mathbb{R}^{d})=\mu_1(\cdot),\mu(\mathbb{R}^{d}\times\cdot)= \mu_2(\cdot)\right\rbrace $ for $\mu_1, \mu_2 \in\mathcal{P}_\ell(\mathbb{R}^{d})$ and define
		
		$$ \bar{\pi}\left(dx,dy \right)= s\pi\left(dx,dy \right) + (1-s)\tilde{\pi}\left(dx,dy \right).$$
		Then
		
		\begin{align*}
			&\bar{\pi}\left(dx,\mathbb{R}^d \right)= s\pi\left(dx,\mathbb{R}^d \right) + (1-s)\tilde{\pi}\left(dx,\mathbb{R}^d \right) = s\mu_{N}^{i,1}+(1-s)\mu_{N}^{i-1,1} = \mu_{N}^{i,s}\\
			&\bar{\pi}\left(\mathbb{R}^d,dy \right)= s\pi\left(\mathbb{R}^d,dy \right) + (1-s)\tilde{\pi}\left(\mathbb{R}^d,dy \right) = s\mu + (1-s)\mu = \mu.
		\end{align*}
		
		By supposing that $\ell>0$, we have:   
		\begin{equation}\label{magg0}
					 W_\ell^{\ell\vee 1}\left(\mu_{N}^{i,s},\mu \right)\leq \int_{\mathbb{R}^d \times \mathbb{R}^{d}} |x-y|^\ell \bar{\pi}\left(dx,dy \right) = s\int_{\mathbb{R}^d \times \mathbb{R}^{d}} |x-y|^\ell \pi\left(dx,dy \right) + (1-s)\int_{\mathbb{R}^d \times \mathbb{R}^{d}} |x-y|^\ell \tilde{\pi}\left(dx,dy \right).
		\end{equation}

		Taking the infimum over $\pi$ and $\tilde{\pi}$, we conclude that
		
		$$  W_\ell^{\ell\vee 1}\left(\mu_{N}^{i,s},\mu \right)\leq sW_\ell^{\ell\vee 1}\left(\mu_{N}^{i,1},\mu \right) + (1-s)W_\ell^{\ell\vee 1}\left(\mu_{N}^{i-1,1},\mu \right)\leq W_\ell^{\ell\vee 1}\left(\mu_{N}^{i,1},\mu \right) \vee  W_\ell^{\ell\vee 1}\left(\mu_{N}^{i-1,1},\mu \right) $$  
		
		and so
		
		\begin{equation}\label{unifconv}
			\max_{1\leq i\leq N} \sup_{s\in \left[0,1 \right]} W_\ell\left( \mu_{N}^{i,s},\mu\right)  = \max_{0\leq i\leq N}W_\ell\left( \mu_{N}^{i,1},\mu \right).  
		\end{equation}
		Now since for $i=1,\cdots, N$
		$$ \mu_{N}^{i,1} = \frac{1}{N}\sum_{j=1}^{i}\delta_{X_j}+\left(1-\frac{i}{N} \right)\mu = \frac{i}{N}\mu_{i} + \left(1-\frac{i}{N} \right)\mu, $$
		
		let $\gamma\in \Pi(\mu_{i},\mu)$ and define
		$ \tilde{\gamma}(dx,dy)=\frac{i}{N}\gamma(dx,dy) +\left(1-\frac{i}{N} \right)\mu\left(dx \right) \delta_x(dy).$
		Then
		
		\begin{align*}
			&\tilde{\gamma}(dx,\mathbb{R}^d) = \frac{i}{N}\gamma(dx,\mathbb{R}^d)+\left(1-\frac{i}{N} \right)\mu\left(dx \right) \delta_x(\mathbb{R}^d) = \frac{i}{N}\mu_{i}(dx)+\left(1-\frac{i}{N} \right)\mu\left(dx \right) =  \mu_{N}^{i,1}(dx)\\
			&\tilde{\gamma}(\mathbb{R}^d,dy) = \frac{i}{N}\gamma(\mathbb{R}^d,dy)+\left(1-\frac{i}{N} \right)\mu\left(dy \right)\delta_y(\mathbb{R}^d) = \frac{i}{N}\mu(dy)+ \left(1-\frac{i}{N} \right)\mu(dy) = \mu(dy).
		\end{align*}
		
		Therefore
		
		\begin{equation}\label{magg}
			W_\ell^{\ell\vee1}\left( \mu_{N}^{i,1},\mu\right)\leq \int_{\mathbb{R}^d\times\mathbb{R}^d}|x-y|^\ell\tilde{\gamma}(dx,dy) = \frac{i}{N}\int_{\mathbb{R}^d\times\mathbb{R}^d}|x-y|^\ell \gamma(dx,dy).
		\end{equation} 
		
		Taking the infimum over $\gamma$, one has
		$$W_\ell\left( \mu_{N}^{i,1},\mu\right)\leq \left( \frac{i}{N}\right) ^\frac{1}{\ell\vee1}W_\ell\left( \mu_i,\mu\right). $$ 
		
		Finally let $\alpha\in\left(0,1 \right) $ so that
		
		\begin{align*}
			\max_{0\leq i\leq N}W_\ell\left( \mu_{N}^{i,1},\mu \right)&\leq \max_{1\leq i\leq N} \left( \frac{i}{N}\right) ^\frac{1}{\ell\vee1} W_\ell\left( \mu_i,\mu \right) = \max_{1\leq i\leq \llcorner\alpha N\lrcorner} \left( \frac{i}{N}\right) ^\frac{1}{\ell\vee1} W_\ell\left( \mu_i,\mu \right) +\max_{\llcorner\alpha N\lrcorner< i\leq N}\left( \frac{i}{N}\right) ^\frac{1}{\ell\vee1} W_\ell\left( \mu_i,\mu \right)\\
			&\leq \alpha^\frac{1}{\ell\vee1}\sup_{i} W_\ell\left( \mu_i,\mu \right) + \max_{\llcorner\alpha N\lrcorner< i\leq N} W_\ell\left( \mu_i,\mu \right).
		\end{align*}
		
		Since we have proved that $\lim_{N\to\infty}W_\ell(\mu_{N},\mu)=0$, for fixed $\alpha$ the last term goes to $0$ as $N$ goes to infinity while the first one is arbitrarily small for $\alpha$ small and so (\ref{unif_conv}) is proved.\\\\
		By replacing $\left|x-y\right|^\ell$ with $1\wedge \left|x-y\right|$ in (\ref{magg0}) and (\ref{magg}), we can do exactly the same for $\ell=0$. \\\\
		
		\textbf{SECOND STEP}\quad Let us define

		$$I_N = \min\left\lbrace 1\leq i \leq N : \exists s\in\left[ 0,1\right] :W_\ell\left(\mu^{i,s}_{N},\mu \right)\geq r  \right\rbrace  $$
		
		and let us introduce the filtration $\left(\mathcal{F}_i=\sigma\left(X_1,\cdots,X_i\right)\right)_{i\geq1} $ for which $I_N$ is a stopping time. According to the first step, under the assumption $\min\emptyset=N+1$, $I_N$ is $a.s.$ equal to $N+1$  for each $ N\geq N^*$ for a random variable $N^*$ taking integers values. This stopping time allows to introduce in the proof the linear functional derivative associated to $U$ since by Hypothesis \textbf{RU\ref{functional_derivative2}} it is well defined in the ball of radius $r$ and center $\mu$.  \\
		
		For $N\geq N^*$, by Lemma \ref{linearderivative}, we have
		$$U\left(\mu_N \right)- U\left(\mu\right) =  \sum_{i=1}^{N}\left(U(\mu_{N}^{i,1}) -U(\mu_{N}^{i,0})  \right) =  \sum_{i=1}^{N} \int_{0}^{1}ds\int_{\mathbb{R}^d} \frac{\delta U}{\delta m}(\mu_{N}^{i,s},x)  \frac{(\delta_{X_i}-\mu)(dx)}{N}. $$
		
		Setting
		
		$$Q_N :=\frac{1}{N}\sum_{i=1}^{N}\left( \frac{\delta U}{\delta m}(\mu_{N}^{i\wedge I_N,0},X_i) -\int_{\mathbb{R}^d} \frac{\delta U}{\delta m}(\mu_{N}^{i\wedge I_N,0},x)\mu(dx) \right),$$
		
		we deduce that for $N\geq N^*$, $U\left(\mu_N \right)- U\left(\mu\right) - Q_N$ coincides with
		
		$$R_N = \frac{1_{N\geq N^*}}{N}\sum_{i=1}^{N}  \int_{0}^{1}ds\int_{\mathbb{R}^d} \left( \frac{\delta U}{\delta m}(\mu_{N}^{i,s},x)- \frac{\delta U}{\delta m}(\mu_{N}^{i,0},x)\right)  (\delta_{X_i}-\mu)(dx).$$
		
		Let us therefore consider the following decomposition:
		$$\sqrt{N}\left(U\left(\mu_N \right)- U\left(\mu\right)  \right) = \sqrt{N}\left(U\left(\mu_N \right)- U\left(\mu\right) - Q_N \right) + \sqrt{N}Q_N. $$
		We will see that the first term will go to $0$ in probability (\textbf{third step}) while the second one will converge in distribution to a normal random variable (\textbf{fourth step}). \\
		
		\textbf{THIRD STEP}\quad Let us first prove the convergence in probability of $\sqrt{N}\left(U\left(\mu_N \right)- U\left(\mu\right) - Q_N \right)$ to $0$.
		By definition we need to prove that $\forall \epsilon>0$
		
		$$\lim_{N\to\infty} \mathbb{P}\left(\sqrt{N}|U\left(\mu_N \right)- U\left(\mu\right) - Q_N| \geq \epsilon\right) = 0. $$
		
		One has 
		
		$$\left\lbrace  \sqrt{N}|U\left(\mu_N \right)- U\left(\mu\right) - Q_N| \geq \epsilon\right\rbrace$$
		$$= \left(  \left\lbrace \sqrt{N}|U\left(\mu_N \right)- U\left(\mu\right) - Q_N|\geq \epsilon \right\rbrace\bigcap \left\lbrace N\geq N^* \right\rbrace \right)   \overset{\cdot}{\bigcup} \left( \left\lbrace \sqrt{N}|U\left(\mu_N \right)- U\left(\mu\right) - Q_N|\geq \epsilon \right\rbrace\bigcap \left\lbrace N<N^* \right\rbrace \right) $$
		$$ \subseteq \left(  \left\lbrace \sqrt{N}|U\left(\mu_N \right)- U\left(\mu\right) - Q_N|\geq \epsilon \right\rbrace\bigcap \left\lbrace N\geq N^* \right\rbrace \right)  \overset{\cdot}{\bigcup} \left\lbrace N<N^* \right\rbrace. $$
		Therefore
		
		$$\mathbb{P}\left(\sqrt{N}|U\left(\mu_N \right)- U\left(\mu\right) - Q_N| \geq \epsilon\right)\leq \mathbb{P}\left(N<N^* \right)+\mathbb{P}\left(\sqrt{N}1_{N\geq N^*}|U\left(\mu_N \right)- U\left(\mu\right) - Q_N|\geq \epsilon\right). $$
		
		Since $\lim_{N\to\infty} \mathbb{P}\left(N<N^* \right) =0, $ it is sufficient to prove the almost sure convergence of $\sqrt{N}R_N$ to $0$. One has \\
		\begin{align*}
			|R_N|&\leq \frac{1_{N\geq N^*}}{N}\sum_{i=1}^{N}\int_{0}^{1}ds\int_{\mathbb{R}^d} \bigg\rvert\frac{\delta U}{\delta m}(\mu_{N}^{i,s},x)- \frac{\delta U}{\delta m}(\mu_{N}^{i,0},x)\bigg\rvert|\delta_{X_i}-\mu|(dx)\\
			&\leq \frac{1_{N\geq N^*}}{N}\sum_{i=1}^{N}\int_{0}^{1}ds\int_{\mathbb{R}^d} \bigg\rvert\frac{\delta U}{\delta m}(\mu_{N}^{i,s},x)- \frac{\delta U}{\delta m}(\mu_{N}^{i,0},x)\bigg\rvert(\delta_{X_i}+\mu)(dx).
		\end{align*}

		By Assumption \textbf{RU\ref{growth_diff}}, $\exists C < \infty$, $\exists \alpha \in \left(\frac{1}{2},1 \right] $ such that for $N\geq N^*$
		$$ \bigg\rvert\frac{\delta U}{\delta m}(\mu_{N}^{i,s},x)- \frac{\delta U}{\delta m}(\mu_{N}^{i,0},x)\bigg\rvert \leq C(1+|x|^{\frac{\ell}{2}})\left(\int_{\mathbb{R}^d}\left( 1+|y|^{\frac{\ell}{2\alpha}}\right) |\mu_{N}^{i,s}-\mu_{N}^{i,0}|(dy) \right)^\alpha $$ 
		
		with
		$$|\mu_{N}^{i,s} -\mu_{N}^{i,0}|(dy) \leq \frac{s}{N}(\delta_{X_i}+\mu)(dy).$$
		
		Substituting the above quantity and using the subadditivity of $x\mapsto x^\alpha$ one obtains
		
		\begin{align*}
			\bigg\rvert\frac{\delta U}{\delta m}(\mu_{N}^{i,s},x)- \frac{\delta U}{\delta m}(\mu_{N}^{i,0},x)\bigg\rvert  &\leq C\left((1+|x|^{\frac{\ell}{2}})\left(2+\int_{\mathbb{R}^d}|y|^{\frac{\ell}{2\alpha}}(\delta_{X_i}+\mu)(dy) \right)^\alpha \left( \frac{s}{N}\right) ^\alpha \right)\\	
			&\leq \frac{C}{N^\alpha} (1+|x|^{\frac{\ell}{2}})\left(2^\alpha+|X_i|^\frac{ \ell}{2}+\left( \int_{\mathbb{R}^d}|y|^\frac{ \ell}{2\alpha}\mu(dy)\right)^\alpha  \right)\\
			& \leq \frac{C^*}{N^\alpha}\left(1+|x|^{\frac{\ell}{2}} \right) \left(1+|X_i|^\frac{ \ell}{2} \right)
		\end{align*}

		where we used (\ref{l_moment_finite}) to obtain the last inequality with $C^*$ a finite constant.
		Therefore
		
		\begin{align*}
			|R_N| &\leq \frac{C^*}{N^{\alpha+1}} \sum_{i=1}^{N}\int_{\mathbb{R}^d}\left(1+|x|^{\frac{\ell}{2}} \right) \left(1+|X_i|^\frac{ \ell}{2} \right)(\delta_{X_i}+\mu)(dx)\\
			& = \frac{C^*}{N^{\alpha+1}}  \sum_{i=1}^{N}\left(1+|X_i|^\frac{ \ell}{2} \right) \left(2+ |X_i|^\frac{ \ell}{2} + \int_{\mathbb{R}^d}|x|^{\frac{\ell}{2}} \mu(dx) \right)\\
			& = \frac{C^*}{N^{\alpha+1}} \sum_{i=1}^{N} \left(2 +\int_{\mathbb{R}^d}|x|^{\frac{\ell}{2}} \mu(dx) + \left( 3+\int_{\mathbb{R}^d}|x|^{\frac{\ell}{2}} \mu(dx)\right) |X_i|^\frac{ \ell}{2} + |X_i|^\ell\right)\\
			&\leq \frac{C_1}{N^\alpha}+ \frac{C_2}{N^\alpha} \int_{\mathbb{R}^d}|y|^\ell\mu_{N}(dy)
		\end{align*}

		for some positive constants $C_1$ and $C_2$. Therefore one has\\

		$$\sqrt{N}|R_N| \leq \frac{C_1}{N^{\alpha-\frac{1}{2}}}+\frac{C_2}{N^{\alpha-\frac{1}{2}}}\int_{\mathbb{R}^d}|y|^\ell\mu_{N}(dy)$$ \\
		
		and since $\alpha>\frac{1}{2}$ and $\lim_{N\to\infty}\int_{\mathbb{R}^d}|y|^\ell\mu_{N}(dy) = \int_{\mathbb{R}^d}|y|^\ell\mu(dy)$, we can conclude that the left-hand side goes to $0$ as $N$ goes to infinity and so the third step is concluded. \\
		
		\textbf{FOURTH STEP}\quad As anticipated, we are now going to prove the convergence in distribution of $\sqrt{N}Q_N$ to a Gaussian random variable. \\
		Remember that 
		$$ \sqrt{N}Q_N = \frac{1}{\sqrt{N}}\sum_{i=1}^{N}\left( \frac{\delta U}{\delta m}(\mu_{N}^{i\wedge I_N,0},X_i) -\int_{\mathbb{R}^d} \frac{\delta U}{\delta m}(\mu_{N}^{i\wedge I_N,0},x)\mu(dx) \right)$$

		and let us consider for a given $m\in B(\mu,r)$ the following Poisson equation
		
		\begin{equation} \label{poisson_fd}
			F(m,x) - PF(m,x) = \frac{\delta U}{\delta m}\left(m,x \right)-\int_{\mathbb{R}^d} \frac{\delta U}{\delta m}\left(m,y \right)\mu(dy), \quad x\in \mathbb{R}^d.
		\end{equation}

		By Proposition \ref{poisson_sqrt} we know that it admits a solution.	
		Thanks to that, we are able to rewrite $\sqrt{N}Q_N$ as  
		
		\begin{align*}
			\sqrt{N}Q_N &= \frac{1}{\sqrt{N}} \sum_{i=1}^{N}\left( F\left(\mu_{N}^{i\wedge I_N,0},X_i\right) - PF\left(\mu_{N}^{i\wedge I_N,0},X_i\right)\right)\\
			& = \dfrac{1}{\sqrt{N}}\left( \sum_{i=1}^{N} F\left(\mu_{N}^{i\wedge I_N,0},X_i\right) 
			- \sum_{i=2}^{N} F\left(\mu_{N}^{(i-1)\wedge I_N,0},X_{i}\right) \right) \\
			&\phantom{=}+\frac{1}{\sqrt{N}}\left( \sum_{i=2}^{N} F\left(\mu_{N}^{(i-1)\wedge I_N,0},X_{i}\right) 
			- \sum_{i=2}^{N+1} PF\left(\mu_{N}^{(i-1)\wedge I_N,0},X_{i-1}\right)\right) \\
			&=  K_{0,N} + K_{1,N} + K_{2,N} + K_{3,N}
		\end{align*}

		with 
		
		\begin{align*}
			&K_{0,N} = \dfrac{F\left(\mu,X_1\right)}{\sqrt{N}} ,\\
			& K_{1,N} = \dfrac{1}{\sqrt{N}} \sum_{i=2}^{N}\left(F\left(\mu_{N}^{i\wedge I_N,0},X_i\right) - F\left(\mu_{N}^{(i-1)\wedge I_N,0},X_{i}\right)\right),\\
			&K_{2,N} =- \dfrac{PF\left(\mu_{N}^{N\wedge I_N,0},X_{N}\right)}{\sqrt{N}},\\
			&K_{3,N} =\dfrac{1}{\sqrt{N}}\sum_{i=2}^{N}\left( F\left(\mu_{N}^{(i-1)\wedge I_N,0},X_{i}\right) -PF\left(\mu_{N}^{(i-1)\wedge I_N,0},X_{i-1}\right)\right).	
		\end{align*}

		The idea now is to study the convergence of $ K_{i,N}$ for $i=0,\cdots, 3.$ We will see that $K_{0,N}, K_{1,N}$ and $K_{2,N}$ go to $0$ in probability as $N$ goes to infinity while $K_{3,N}$ is the term providing the convergence in distribution to a Gaussian random variable. By Slutsky's theorem, we can therefore obtain the limit of $\sqrt{N}Q_N$ (same as the limit of $K_{3,N}$) and conclude the proof. \\
		
		\underline{\textbf{Convergence of $K_{0,N}+K_{1,N}+K_{2,N}$ to $0$ in probability}}\\
		
		The almost sure convergence  (and so in probability) of $K_{0,N}$ to $0$ is immediate.\\
		For what concerns the convergence in probability of $K_{1,N}$ to $0$, following the same idea used in the third step, it is sufficient to prove the almost sure convergence of $1_{N\geq N^*}|K_{1,N}|$ to $0$. Therefore
		
		\begin{align}
			1_{N\geq N^*}\left|K_{1,N}\right|&= \left| \dfrac{1_{N\geq N^*}}{\sqrt{N}} \sum_{i=2}^{N}\left(F\left(\mu_{N}^{i\wedge I_N,0},X_i\right) - F\left(\mu_{N}^{(i-1)\wedge I_N,0},X_{i}\right)\right) \right|\\
			& \leq \dfrac{1_{N\geq N^*}}{\sqrt{N}} \sum_{i=2}^{N}\left|F\left(\mu_{N}^{i,0},X_i\right) - F\left(\mu_{N}^{i-1,0},X_{i}\right) \right|\\\label{k1N}
			&\leq \dfrac{1_{N\geq N^*}}{\sqrt{N}} \sum_{i=2}^{N} \left\|F\left(\mu_{N}^{i,0},\cdot\right) - F\left(\mu_{N}^{i-1,0},\cdot\right) \right\|_{\sqrt{V},1} \left( 1 + \sqrt{V(X_i)}\right).
		\end{align}

		By (\ref{norm_diff_F}), observing that $\mu_{N}^{i,0}-\mu_{N}^{i-1,0}= \frac{1}{N}\left( \delta_{X_{i-1}}-\mu\right), $
		one obtains
		
		\begin{align*}
			1_{N\geq N^*}\left\|F\left(\mu_{N}^{i,0},\cdot\right) - F\left(\mu_{N}^{i-1,0},\cdot\right) \right\|_{\sqrt{V},1}&\leq1_{N\geq N^*}\bar{C}_\frac{1}{2}\left( \int_{\mathbb{R}^d}\left(1+ |y|^{\frac{\ell}{2\alpha}}\right)  |\mu_{N}^{i-1,0}-\mu_{N}^{i,0}|(dy) \right)^\alpha\\
			&\leq 1_{N\geq N^*}\bar{C}_\frac{1}{2} \frac{1}{N^\alpha}  \left(2+ \int_{\mathbb{R}^d}|y|^{\frac{\ell}{2\alpha}} \delta_{X_{i-1}}(dy)+\int_{\mathbb{R}^d}|y|^{\frac{\ell}{2\alpha}}\mu(dy) \right)^\alpha\\
			&\leq 1_{N\geq N^*}\bar{C}^*_\frac{1}{2} \frac{1}{N^\alpha}\left( |X_{i-1}|^{\frac{\ell}{2}} +1 \right).
		\end{align*}

		where we used (\ref{l_moment_finite}) and the subadditivity of $x\mapsto x^\alpha$ to obtain the last inequality with $\bar{C}^*_\frac{1}{2}$ a finite constant. Therefore plugging this inequality in (\ref{k1N}), we obtain the following estimation for $1_{N\geq N^*}\left|K_{1,N}\right|$
		
		\begin{align*}
			1_{N\geq N^*}\left|K_{1,N}\right| &\leq \dfrac{1_{N\geq N^*}\bar{C}^*_\frac{1}{2}}{N^{\alpha+\frac{1}{2}}} \sum_{i=2}^{N}  \left( |X_{i-1}|^{\frac{\ell}{2}} + 1 \right)\left( 1 + \sqrt{V(X_i)}\right)\\
			&=1_{N\geq N^*}\bar{C}^*_\frac{1}{2} \left( \dfrac{1}{N^{\alpha-\frac{1}{2}}}\times \frac{1}{N}\sum_{i=2}^{N}  |X_{i-1}|^{\frac{\ell}{2}} +\dfrac{1}{N^{\alpha+\frac{1}{2}}} \sum_{i=2}^{N}|X_{i-1}|^{\frac{\ell}{2}} \sqrt{V(X_i)}\right) \\
			&\phantom{=}+ 1_{N\geq N^*}\bar{C}^*_\frac{1}{2}\left(  \frac{1}{N^{\alpha-\frac{1}{2}}}\times\frac{1}{N}\sum_{i=2}^{N}  \sqrt{V(X_i)} + \frac{N-1}{N^{\alpha+\frac{1}{2}}}\right)\\
			&\leq 1_{N\geq N^*}\bar{C}^*_\frac{1}{2} \left( \dfrac{1}{N^{\alpha-\frac{1}{2}}}\times\frac{1}{N}\sum_{i=2}^{N} |X_{i-1}|^{\ell}+\dfrac{1}{N^{\alpha-\frac{1}{2}}}\times\frac{1}{N} \sum_{i=2}^{N} V(X_i) + 2\frac{N-1}{N^{\alpha+\frac{1}{2}}}\right).
		\end{align*}

		Since $\alpha> \frac{1}{2}$, $\frac{1}{N}\sum_{i=2}^{N} |X_{i-1}|^{\ell}$ converges a.s. to $\int_{\mathbb{R}^d}|x|^\ell \mu(dx)$ and $\frac{1}{N} \sum_{i=2}^{N} V(X_i)$ converges a.s. to $\mu(V)$, we can conclude that $K_{1,N}$ goes to $0$ in probability. \\	
		Finally let us prove the convergence of $K_{2,N}$ to $0$ in probability.	Still in this case it is sufficient to prove that $1_{N\geq N^*}|K_{2,N}|$ converges in probability to $0$.
		By (\ref{norm_F}) we have
		
		\begin{align*}
			1_{N\geq N^*}|K_{2,N}| &= \frac{1_{N\geq N^*}}{\sqrt{N}}\left|PF\left(\mu_{N}^{N,0},X_{N}\right)  \right|\leq\frac{1_{N\geq N^*}}{\sqrt{N}} \left\|F\left(\mu_{N}^{N,0},\cdot \right) \right\|_{\sqrt{V},1}\int_{\mathbb{R}^d}(1+\sqrt{V(x)})P(X_N,dx)\\
			&=\frac{1_{N\geq N^*}}{\sqrt{N}} \left\|F\left(\mu_{N}^{N,0},\cdot \right) \right\|_{\sqrt{V},1}\left(1+P\sqrt{V}(X_N) \right) \leq \frac{1_{N\geq N^*}}{\sqrt{N}}\bar{C}_\frac{1}{2}\left(1+\sqrt{\gamma}\sqrt{V(X_N)}+\sqrt{K} \right)
		\end{align*}

		where we applied (\ref{sqrtV}) to obtain the last inequality. We can therefore conclude that the right-hand side goes to $0$ as $N$ goes to infinity by observing that by Theorem \ref{large_numbers}, $\bar{V}_N:=\frac{1}{N}\sum_{i=1}^{N}V\left(X_i \right)$ converges almost surely to $ \mu\left(V \right)$ and consequently 
		$$\sqrt{\frac{V\left(X_N\right) }{N}} =\sqrt{\bar{V}_N-\frac{N-1}{N}\bar{V}_{N-1}} \longrightarrow 0 \,\, a.s.$$

		\underline{\textbf{Convergence in distribution of $K_{3,N}$}}\\ 
		
		To study the convergence in distribution of $K_{3,N}$, we apply the Central Limit Theorem for martingales (see Corollary 3.1 \cite{HallHeyde}). First we need to prove a square integrable martingale property and then that the Bracket condition and the Lindeberg condition hold.\\
		Let us recall the expression of $K_{3,N}$
		$$ K_{3,N} =\dfrac{1}{\sqrt{N}}\sum_{i=2}^{N}\left( F\left(\mu_{N}^{(i-1)\wedge I_N,0},X_{i}\right) -PF\left(\mu_{N}^{(i-1)\wedge I_N,0},X_{i-1}\right)\right).$$

		For $i=2,\cdots,N$ let 
		
		$$Y_{N,i}= \dfrac{1}{\sqrt{N}}\left( F\left(\mu_{N}^{(i-1)\wedge I_N,0},X_{i}\right) -PF\left(\mu_{N}^{(i-1)\wedge I_N,0},X_{i-1}\right)\right).$$\\
		
		We will start by \textbf{checking that $\mathbb{E}\left(Y_{N,i}|\mathcal{F}_{i-1}\right)=0$.}
		Since	$\mu^{i\wedge I_N,0}_{N}=\sum_{j=1}^{i-1}1_{\left\lbrace I_N=j\right\rbrace }\mu^{j,0}_{N}+1_{\left\lbrace I_N>i-1\right\rbrace}\mu^{i,0}_{N}$ is $\mathcal{F}_{i-1}$- measurable, one has
		\begin{align*}
			\mathbb{E}\left(Y_{N,i}|\mathcal{F}_{i-1}\right) &= \dfrac{1}{\sqrt{N}}\mathbb{E}\left(F\left(\mu_{N}^{(i-1)\wedge I_N,0},X_{i}\right) -PF\left(\mu_{N}^{(i-1)\wedge I_N,0},X_{i-1}\right) |\mathcal{F}_{i-1}\right)\\
			&= \dfrac{1}{\sqrt{N}}\left(\mathbb{E}\left(F\left(\mu_{N}^{(i-1)\wedge I_N,0},X_{i}\right) |\mathcal{F}_{i-1}\right) - \mathbb{E}\left(F\left(\mu_{N}^{(i-1)\wedge I_N,0},X_{i}\right)| \mathcal{F}_{i-1}\right) 
			\right) = 0.
		\end{align*}

		For what concerns the \textbf{square integrability}, it is sufficient to check that 
		$$\mathbb{E}\left(F^2\left(\mu_{N}^{(i-1)\wedge I_N,0},X_{i}\right) \right) < \infty.$$
		
		By (\ref{norm_F}) 
		$$ \mathbb{E}\left(F^2\left(\mu_{N}^{(i-1)\wedge I_N,0},X_{i}\right) \right)\leq \mathbb{E}\left(  \left\|F\left(\mu_{N}^{(i-1)\wedge I_N,0},\cdot\right) \right\|^2_{\sqrt{V},1}\left(1+\sqrt{V\left( X_i\right)}  \right)^2\right) \leq 2\bar{C}^2_\frac{1}{2} \left(  
		1+\mathbb{E}\left( V\left( X_i\right) \right)\right)  $$
		
		where  $\mathbb{E}\left( V\left( X_i\right) \right)\leq \gamma^{i-1}\nu_1(V)+K\sum_{\ell=0}^{i-2}\gamma^\ell $ and so by Lemma \ref{int_V}, the right-hand side is finite.\\

		Let us now study the	\textbf{convergence of $\sum_{i=2}^{N}\mathbb{E}\left(Y^2_{N,i}|\mathcal{F}_{i-1} \right)$}:
		
		\begin{align*}
			&\sum_{i=2}^{N}\mathbb{E}\left(Y^2_{N,i}|\mathcal{F}_{i-1} \right)\\
			&= \frac{1}{N} \sum_{i=2}^{N} \mathbb{E}\left(
			F^2\left(\mu_{N}^{(i-1)\wedge I_N,0},X_{i}\right) +\left( PF\right)^2\left(\mu_{N}^{(i-1)\wedge I_N,0},X_{i-1}\right)
			-2 F\left(\mu_{N}^{(i-1)\wedge I_N,0},X_{i}\right)PF\left(\mu_{N}^{(i-1)\wedge I_N,0},X_{i-1}\right)|\mathcal{F}_{i-1} \right)\\
			&= \frac{1}{N} \sum_{i=2}^{N}
			PF^2\left(\mu_{N}^{(i-1)\wedge I_N,0},X_{i-1}\right)-\frac{1}{N} \sum_{i=2}^{N}\left( PF\right)^2\left(\mu_{N}^{(i-1)\wedge I_N,0},X_{i-1}\right)
		\end{align*}

		Before studying the behavior of the above quantity, let us observe that the convergence of\\ $\sup_{x\in \mathbb{R}^d}\dfrac{|\frac{\delta U}{\delta m}\left(\tilde{\mu},x \right)-\frac{\delta U}{\delta m}\left( \mu,x \right)|}{1+|x|^\frac{\ell}{2}}$ to $0$ when $W_\ell\left(\tilde{\mu}, \mu \right)$ goes to $0$ together with the a.s. convergence of $\max_{1\leq i\leq N} W_\ell(\mu_{N}^{i,0},\mu)$ to $0$ imply the existence of a sequence of random variables $\left( \epsilon_N\right)_{N\geq 0} $ converging a.s. to $0$ as $N\rightarrow \infty$ such that
		
		\begin{equation} \label{control}
			\forall 1\leq i\leq N\qquad \bigg\rvert\frac{\delta U}{\delta m}\left(\mu^{i\wedge I_N,0}_N,x \right)-\frac{\delta U}{\delta m}\left( \mu,x \right)\bigg\rvert\leq\left(1+|x|^\frac{\ell}{2}\right)\epsilon_N.
		\end{equation}
		
		With (\ref{norm_diff_F_1}) and \textbf{L3}, we deduce that
		
		\begin{equation} \label{control_diff}
			\left\|F\left(\mu^{i\wedge I_N,0}_N,\cdot \right) - F\left(\mu,\cdot \right) \right\|_{\sqrt{V},1}  \leq \bar{C}_\frac{1}{2} \epsilon_N \sup_{x\in \mathbb{R}^d} \frac{1+|x|^\frac{\ell}{2}   }{1+\sqrt{V(x)}} \leq\bar{C}_\frac{1}{2} \left(1+\sqrt{C_\ell} \right) \epsilon_N.
		\end{equation}\\

		\underline{First Component}\quad 
		We can rewrite the first component in the following way 
		
		$$\frac{1}{N} \sum_{i=2}^{N}
		PF^2\left(\mu_{N}^{(i-1)\wedge I_N,0},X_{i-1}\right) = \frac{1}{N} \sum_{i=2}^{N}\left( 
		PF^2\left(\mu_{N}^{(i-1)\wedge I_N,0},X_{i-1}\right) - PF^2\left(\mu,X_{i-1}\right)\right) +\frac{1}{N} \sum_{i=2}^{N} PF^2\left(\mu,X_{i-1}\right).  $$
		
		Since by \textbf{L\ref{D1}} \begin{equation}\label{PFsquared}
			\left\|PF^2\left(\mu,\cdot\right)\right\|_{V,1} \leq \left\|F^2\left(\mu,\cdot\right)\right\|_{V,1} \sup_{x\in \mathbb{R}^d} \frac{1+PV(x)}{1+V(x)} < \infty, 
		\end{equation}
		by Theorem \ref{large_numbers} we obtain that 
		$$ \lim_{N\rightarrow \infty}\frac{1}{N} \sum_{i=2}^{N} PF^2\left(\mu,X_{i-1}\right) = \mu(PF^2\left(\mu,\cdot\right))=\mu(F^2\left(\mu,\cdot\right)) \quad a.s. $$
		where for the last equality we used the invariance of $\mu$ with respect to $P$. On the other hand
		
		\begin{align*}
			&\left| \frac{1}{N} \sum_{i=2}^{N}\left( 
			PF^2\left(\mu_{N}^{(i-1)\wedge I_N,0},X_{i-1}\right) - PF^2\left(\mu,X_{i-1}\right)\right)\right|\\
			&\phantom{\frac{1}{N}}\leq \frac{1}{N} \sum_{i=2}^{N} \int_{\mathbb{R}^d}\left| F^2\left(\mu_{N}^{(i-1)\wedge I_N,0},x\right) - F^2\left(\mu,x\right)\right| P\left(X_{i-1},dx \right)\\
			&\phantom{\frac{1}{N}}\leq \frac{1}{N} \sum_{i=2}^{N}\int_{\mathbb{R}^d}\left(  F\left(\mu_{N}^{(i-1)\wedge I_N,0},x\right) - F\left(\mu,x\right)\right)^2  P\left(X_{i-1},dx \right)\\
			& \phantom{\frac{1}{N}}\quad+ \frac{2}{N} \sum_{i=2}^{N}\int_{\mathbb{R}^d} \left|F\left(\mu,x\right) \right|  \left|F\left(\mu_{N}^{(i-1)\wedge I_N,0},x\right) - F\left(\mu,x\right) \right|  P\left(X_{i-1},dx \right)\\
			&\phantom{\frac{1}{N}}\leq \frac{1}{N} \sum_{i=2}^{N} \left\|F\left(\mu_{N}^{(i-1)\wedge I_N,0},\cdot\right) - F\left(\mu,\cdot\right) \right\|^2_{\sqrt{V},1}\left(1+PV\left(X_{i-1}\right)+2P\sqrt{V}\left(X_{i-1}\right) \right)\\
			&\phantom{\frac{1}{N}}\quad+  \frac{2}{N} \sum_{i=2}^{N} \left\|F\left(\mu, \cdot \right)\right\|_{\sqrt{V},1} \left\|F\left(\mu_{N}^{(i-1)\wedge I_N,0},\cdot \right) - F\left(\mu, \cdot \right)\right\|_{\sqrt{V},1} \left(1+PV\left(X_{i-1}\right)+2P\sqrt{V}\left(X_{i-1}\right)  \right)\\
			&\phantom{\frac{1}{N}}\leq \epsilon_N\left(\bar{C}^2_\frac{1}{2} \left(1+\sqrt{C_\ell} \right)^2 \epsilon_N+ 2\bar{C}^2_\frac{1}{2} \left(1+\sqrt{C_\ell} \right)   \right)\frac{1}{N}\sum_{i=2}^{N} \left(1+\gamma V\left(X_{i-1}\right) + K+2\sqrt{\gamma}\sqrt{V\left(X_{i-1}\right)} +2\sqrt{K} \right)
		\end{align*}

		where we used (\ref{control_diff}), (\ref{norm_F}), the Lyapunov condition and (\ref{sqrtV}) . Therefore the right-hand side goes to $0$ since $\frac{1}{N}\sum_{i=2}^{N}V\left(X_{i-1}  \right) $ converges to $\mu(V)$, $\frac{1}{N}\sum_{i=2}^{N}\sqrt{V\left(X_{i-1}  \right)} $ converges to $\mu(\sqrt{V})$  and $\epsilon_N$ goes to $0$ a.s..\\
		
		\underline{Second Component}\quad 
		As before, we can rewrite the second component in the following way 
		\begin{align*}
			& \frac{1}{N} \sum_{i=2}^{N}\left( PF\right)^2\left(\mu_{N}^{(i-1)\wedge I_N,0},X_{i-1}\right) \\
			&\phantom{\frac{1}{N}}= \frac{1}{N} \sum_{i=2}^{N}\left( \left( PF\right)^2\left(\mu_{N}^{(i-1)\wedge I_N,0},X_{i-1}\right)-\left( PF\right)^2\left(\mu,X_{i-1}\right)\right) +\frac{1}{N} \sum_{i=2}^{N}\left( PF\right)^2\left(\mu,X_{i-1}\right).\end{align*}
		
		By (\ref{PFsquared}) and Jensen's inequality $ \left\|\left( PF\right)^2\left(\mu,\cdot\right) \right\|_{V,1} \leq \infty $, therefore by Theorem \ref{large_numbers}
		$$\frac{1}{N} \sum_{i=2}^{N}\left( PF\right)^2\left(\mu,X_{i-1}\right)  = \mu(\left( PF\right)^2\left(\mu,\cdot\right)) \quad a.s.. $$

		On the other hand
		
		\begin{align*}
			&\left| \frac{1}{N} \sum_{i=2}^{N}\left( \left( PF\right)^2\left(\mu_{N}^{(i-1)\wedge I_N,0},X_{i-1}\right)-\left( PF\right)^2\left(\mu,X_{i-1}\right)\right)\right|\\
			&\phantom{\frac{1}{N}}\leq\frac{1}{N} \sum_{i=2}^{N} \left(  PF\left(\mu_{N}^{(i-1)\wedge I_N,0},X_{i-1}\right)- PF\left(\mu,X_{i-1}\right)\right)^2\\
			&\phantom{=\frac{1}{N}}+ \frac{2}{N}\sum_{i=2}^{N}\left| PF\left(\mu,X_{i-1}\right)\right|\left| PF\left(\mu_{N}^{(i-1)\wedge I_N,0},X_{i-1}\right)- PF\left(\mu,X_{i-1}\right)\right|\\
			&\phantom{\frac{1}{N}}\leq \frac{1}{N} \sum_{i=2}^{N}\left(\left\|F\left(\mu_{N}^{(i-1)\wedge I_N,0},\cdot \right)- F\left(\mu,\cdot \right) \right\|_{\sqrt{V},1} \left( 1+P\sqrt{V}(X_{i-1})\right)  \right)^2\\
			&\phantom{\frac{1}{N}}\quad+\frac{2}{N}\sum_{i=2}^{N} \left\| F\left(\mu,\cdot \right) \right\|_{\sqrt{V},1}\left\|F\left(\mu_{N}^{(i-1)\wedge I_N,0},\cdot \right)- F\left(\mu,\cdot \right) \right\|_{\sqrt{V},1}\left( 1+P\sqrt{V}(X_{i-1})\right)^2\\
			&\phantom{\frac{1}{N}}\leq \epsilon_N\left( \bar{C}^2_\frac{1}{2} \left(1+\sqrt{C_\ell} \right)^2\epsilon_N + 2\bar{C}^2_\frac{1}{2} \left(1+\sqrt{C_\ell} \right)\right)  \frac{1}{N}\sum_{i=2}^{N}\left(1+\sqrt{\gamma}\sqrt{V(X_{i-1})} +\sqrt{K}\right)^2.
		\end{align*}

		The right-hand side goes to $0$ as $N$ goes to infinity since $\epsilon_N$ goes to $0$ a.s. and, by Theorem \ref{large_numbers}, $\frac{1}{N}\sum_{i=2}^{N}V\left(X_{i-1}  \right) $ converges to $\mu(V)$ and $\frac{1}{N}\sum_{i=2}^{N}\sqrt{V\left(X_{i-1}  \right)} $ converges to $\mu(\sqrt{V})$.\\
		In conclusion we have proved that almost surely
		
		\begin{equation}\label{asint.variance}
			\frac{1}{N} \sum_{i=2}^{N}
			\left( 	PF^2\left(\mu_{N}^{(i-1)\wedge I_N,0},X_{i-1}\right)-\left( PF\right)^2\left(\mu_{N}^{(i-1)\wedge I_N,0},X_{i-1}\right)\right) \underset{N\rightarrow \infty}{\longrightarrow} \mu\left(F^2\left(\mu,\cdot\right) \right) -  \mu\left( \left( PF\right)^2\left(\mu,\cdot \right) \right).
		\end{equation} 
		To finally conclude that $K_{3,N} = \sum_{i=2}^{N} Y_{N,i}
		\overset{d}{\Longrightarrow} \mathcal{N}\left(0, \mu\left(F^2\left(\mu,\cdot\right) \right) -  \mu\left( \left( PF\right)^2\left(\mu,\cdot \right)\right) \right)$, it remains just to verify that the \textbf{Lindeberg condition} holds.\\	We need to check that for $\epsilon>0$, $\sum_{i=2}^{N}\mathbb{E}\left(Y^2_{N,i} 1_{\left\lbrace Y^2_{N,i}> \epsilon \right\rbrace }| \mathcal{F}_{i-1}\right) $ goes to $0$ in probability as $N$ goes to infinity.
		By Jensen's inequality, the Lyapunov condition, (\ref{sqrtV}) and (\ref{norm_F})  we obtain
		
		\begin{align*}
			NY^2_{N,i} &= \left( F\left(\mu_{N}^{(i-1)\wedge I_N,0},X_{i}\right) -PF\left(\mu_{N}^{(i-1)\wedge I_N,0},X_{i-1}\right)\right)^2\\
			&\leq 2F^2\left(\mu_{N}^{(i-1)\wedge I_N,0},X_{i}\right)+2 PF^2\left(\mu_{N}^{(i-1)\wedge I_N,0},X_{i-1}\right)\\
			&\leq 2 \left\|F\left(\mu_{N}^{(i-1)\wedge I_N,0},\cdot\right) \right\|^2_{\sqrt{V},1}\left(1+\sqrt{V(X_i)} \right)^2+2\left\|F\left(\mu_{N}^{(i-1)\wedge I_N,0},\cdot\right) \right\|^2_{\sqrt{V},1} \left(1+PV(X_{i-1})+2P\sqrt{V}(X_{i-1}) \right)\\
			&\leq  2\bar{C}^2_\frac{1}{2}\left( 2+V(X_i)+2\sqrt{V\left(X_{i} \right) } +\gamma V\left(X_{i-1} \right)+K+2\sqrt{\gamma}\sqrt{V(X_{i-1})}+2\sqrt{K} \right)\\
			&\leq  2L\bar{C}^2_\frac{1}{2}\left(1+ V(X_i)+ V\left(X_{i-1} \right)\right) 
		\end{align*}
		
		where $L$ is a finite constant. As, for $a,b,c,g \in \mathbb{R}_+$

		\begin{align*}
			\left(a+b+c \right)1_{\left\lbrace a+b+c\geq g \right\rbrace}&=\left(a+b+c \right) \left(1_{\left\lbrace a>b,a>c,a+b+c\geq g \right\rbrace}+1_{\left\lbrace b\geq a,b>c,a+b+c\geq g \right\rbrace}+1_{\left\lbrace c\geq a,c\geq b,a+b+c\geq g \right\rbrace} \right)\\
			&\leq 3a1_{\left\lbrace a>b,a>c,a+b+c\geq g \right\rbrace}+3b1_{\left\lbrace b\geq a,b>c,a+b+c\geq g \right\rbrace}+3c1_{\left\lbrace c\geq a,c\geq b,a+b+c\geq g \right\rbrace}\\
			&\leq 3a1_{\left\lbrace a\geq\frac{g}{3} \right\rbrace} + 3b1_{\left\lbrace b\geq\frac{g}{3} \right\rbrace}+3c1_{\left\lbrace c\geq\frac{g}{3} \right\rbrace},
		\end{align*}

		it is enough to check that for each $\epsilon >0$ 
		
		$$\frac{N-1}{N} 1_{\left\lbrace \frac{1}{N}> \epsilon\right\rbrace }  + \sum_{i=2}^{N} \mathbb{E}\left(\frac{ V(X_i)}{N} 1_{\left\lbrace \frac{V(X_i)}{N}> \epsilon\right\rbrace } |\mathcal{F}_{i-1} \right) +   \sum_{i=2}^{N} \frac{V(X_{i-1})}{N} 1_{\left\lbrace \frac{V(X_{i-1})}{N}> \epsilon\right\rbrace }$$
		
		goes to $0$ in probability as $N$ goes to infinity.\\
		It is immediate to prove that the \textbf{first component} goes to $0$. For the \textbf{second component} let us observe that
		
		\begin{align*}
			\mathbb{E}\left(  \sum_{i=2}^{N} \mathbb{E}\left(\frac{V(X_i)}{N} 1_{\left\lbrace \frac{V(X_i)}{N}> \epsilon\right\rbrace } |\mathcal{F}_{i-1} \right)\right)  &= \sum_{i=2}^{N} \mathbb{E}\left(\frac{V(X_i)}{N} 1_{\left\lbrace \frac{V(X_i)}{N}> \epsilon\right\rbrace } \right)\\
			& = \sum_{i=2}^{N} \int_{\mathbb{R}^d} \frac{V(x)}{N} 1_{\left\lbrace \frac{V(x)}{N}> \epsilon\right\rbrace } \nu_1P^{i-1}\left( dx\right)\\
			&= \sum_{i=2}^{N} \int_{\mathbb{R}^d} \frac{V(x)}{N} 1_{\left\lbrace \frac{V(x)}{N}> \epsilon\right\rbrace } \left( \nu_1P^{i-1}\left( dx\right)-\mu\left( dx\right)  \right)  \\
			&\phantom{=}+ \frac{N-1}{N}\int_{\mathbb{R}^d} V(x) 1_{\left\lbrace \frac{V(x)}{N}> \epsilon\right\rbrace }\mu(dx)\\
			&\leq \sum_{i=2}^{N} \left\|\frac{V(\cdot)}{N} 1_{\left\lbrace \frac{V(\cdot)}{N}> \epsilon\right\rbrace }\right\|_{V,\beta} d_{V,\beta}\left( \nu_1P^{i-1},\mu \right)+ \frac{N-1}{N}\int_{\mathbb{R}^d} V(x) 1_{\left\lbrace \frac{V(x)}{N}> \epsilon\right\rbrace }\mu(dx).
		\end{align*}

		Choosing $\beta \in \left(0,\frac{\rho}{\sqrt{K}} \right)$, we can apply (\ref{stima_distanza})  and deduce that
		\begin{align*}
			&\sum_{i=2}^{N} \left\|\frac{V(\cdot)}{N} 1_{\left\lbrace \frac{V(\cdot)}{N}> \epsilon\right\rbrace }\right\|_{V,\beta} d_{V,\beta}\left( \nu_1P^{i-1},\mu \right)+ \frac{N-1}{N}\int_{\mathbb{R}^d} V(x) 1_{\left\lbrace \frac{V(x)}{N}> \epsilon\right\rbrace }\mu(dx)\\
			&\phantom{==}\leq \left\|\frac{V(\cdot)}{N} 1_{\left\lbrace \frac{V(\cdot)}{N}> \epsilon\right\rbrace }\right\|_{V,\beta} \sum_{i=0}^{\infty} \chi^i  d_{V,\beta}\left(\nu_1,\mu \right)+ \frac{N-1}{N}\int_{\mathbb{R}^d} V(x) 1_{\left\lbrace \frac{V(x)}{N}> \epsilon\right\rbrace }\mu(dx)\\
			& \phantom{==}\leq \frac{1}{N}\sup_{x\in \mathbb{R}^d} \frac{V(x)}{1+\beta V(x)} \frac{1}{1-\chi}d_{V,\beta}\left(\nu_1,\mu \right)+ \frac{N-1}{N}\int_{\mathbb{R}^d} V(x) 1_{\left\lbrace \frac{V(x)}{N}> \epsilon\right\rbrace }\mu(dx).
		\end{align*}
		By Lemma \ref{int_V}, it is enough to suppose that $\nu_1(V)<\infty$ so that $d_{V,\beta}\left(\nu_1,\mu \right)<\infty$ and the first term goes to $0$.
		The second term goes to $0$ by Lebesgue's theorem since $\mu(V)<\infty$.
		Finally let us study the \textbf{third component}. 
		By Theorem \ref{large_numbers} we know that $\overline{V}_N:= \frac{1}{N}\sum_{i=1}^{N}V\left( X_{i}\right)$ converges almost surely to $\mu\left(V \right)$. Therefore 
		
		$$\frac{V\left( X_{N}\right)}{N} = \overline{V}_N - \frac{N-1}{N}\overline{V}_{N-1}\underset{N\rightarrow \infty}{\longrightarrow} 0 \quad a.s.$$
		
		that implies $\forall \epsilon >0$
		
		$$V\left( X_{N}\right) 1_{\left\lbrace   \frac{V\left( X_{N}\right)}{N} > \epsilon \right\rbrace} \underset{N\rightarrow \infty}{\longrightarrow} 0 \quad a.s.$$ 
		
		and taking the Cesaro mean we deduce that
		
		$$\lim_{N\to\infty} \frac{1}{N} \sum_{i=1}^{N}  V\left( X_{i}\right) 1_{\left\lbrace \frac{V\left( X_{i}\right)}{N} > \epsilon\right\rbrace } \leq \lim_{N\to\infty} \frac{1}{N} \sum_{i=1}^{N}V\left( X_{i}\right)1_{\left\lbrace   \frac{V\left( X_{i}\right)}{i} > \epsilon \right\rbrace} = 0.  $$

	\end{proof}

	\subsection{Independent Non-Equidistributed Random Variables}
	Before providing the proof of Theorem \ref{independent_case}, let us observe that Hypothesis \textbf{TX }implies the Lindeberg condition.
	\begin{lemma} Hypothesis \textbf{TX} implies that
		\begin{equation}\label{lindenberg}
			\forall \epsilon>0,\underset{N\to\infty}{\lim}\dfrac{1}{N}\mathlarger{\sum}_{i=1}^{N}\mathbb{E}\left(|X_i|^\ell 1_{\left\lbrace |X_i|^\ell>N\epsilon \right\rbrace} \right)= 0.
		\end{equation}	
	\end{lemma}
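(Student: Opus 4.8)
The plan is to derive \eqref{lindenberg} from Kronecker's lemma applied to the convergent series furnished by Hypothesis \textbf{TX}. Set $Y_i:=|X_i|^\ell$, which has finite expectation since $\nu_i\in\mathcal{P}_\ell(\mathbb{R}^d)$, and let $a_i:=\mathbb{E}\big((Y_i-i^\beta)1_{\{Y_i>i^\beta\}}\big)\ge 0$; Hypothesis \textbf{TX} says exactly that $\sum_{i\ge 1}a_i/i<\infty$. As $(i)_{i\ge1}$ is nondecreasing with limit $+\infty$, Kronecker's lemma gives $\frac1N\sum_{i=1}^N a_i\underset{N\to\infty}{\longrightarrow}0$. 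So it will be enough to prove that, for each $\epsilon>0$, there is $N_\epsilon$ with $\mathbb{E}\big(Y_i1_{\{Y_i>N\epsilon\}}\big)\le 2a_i$ for all $N\ge N_\epsilon$ and all $1\le i\le N$; averaging over $i\le N$ and letting $N\to\infty$ then yields \eqref{lindenberg}.

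To get this inequality, fix $\epsilon>0$ and take $N_\epsilon:=\lceil(2/\epsilon)^{1/(1-\beta)}\rceil$, which is finite precisely because $\beta<1$; for $N\ge N_\epsilon$ one has $N^{\beta-1}\le\epsilon/2$, hence $i^\beta\le N^\beta\le N\epsilon/2\le N\epsilon$ for every $1\le i\le N$. Consequently $\{Y_i>N\epsilon\}\subseteq\{Y_i>i^\beta\}$, and on that event $Y_i-i^\beta>0$, so
\[
Y_i1_{\{Y_i>N\epsilon\}}=(Y_i-i^\beta)1_{\{Y_i>N\epsilon\}}+i^\beta1_{\{Y_i>N\epsilon\}}\le (Y_i-i^\beta)1_{\{Y_i>i^\beta\}}+N^\beta1_{\{Y_i>N\epsilon\}}.
\]
Taking expectations and bounding $\mathbb{P}(Y_i>N\epsilon)\le\frac{1}{N\epsilon}\mathbb{E}\big(Y_i1_{\{Y_i>N\epsilon\}}\big)$ (valid since $Y_i>N\epsilon$ on the indicated event), I obtain
\[
\mathbb{E}\big(Y_i1_{\{Y_i>N\epsilon\}}\big)\le a_i+\frac{N^{\beta-1}}{\epsilon}\,\mathbb{E}\big(Y_i1_{\{Y_i>N\epsilon\}}\big)\le a_i+\tfrac12\,\mathbb{E}\big(Y_i1_{\{Y_i>N\epsilon\}}\big);
\]
since $\mathbb{E}\big(Y_i1_{\{Y_i>N\epsilon\}}\big)\le\mathbb{E}(Y_i)<\infty$, the last term is absorbed into the left-hand side, giving $\mathbb{E}\big(Y_i1_{\{Y_i>N\epsilon\}}\big)\le 2a_i$, as wanted.

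The one subtlety worth flagging is that Hypothesis \textbf{TX} alone does \emph{not} guarantee that $\frac1N\sum_{i=1}^N\mathbb{E}(|X_i|^\ell)$ stays bounded --- for instance $|X_i|^\ell\equiv i^\beta$ satisfies \textbf{TX} yet makes this Cesàro mean diverge --- so one cannot simply dominate the truncated moments by the untruncated ones and invoke Cesàro uniform integrability. The self-absorbing estimate above circumvents this by exploiting that, for $i\le N$, the threshold $N\epsilon$ overwhelms $i^\beta$ precisely because $\beta<1$; every remaining step is elementary.
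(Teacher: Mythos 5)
Your proof is correct and follows essentially the same route as the paper: Kronecker's lemma turns \textbf{TX} into Ces\`aro convergence of $\mathbb{E}\bigl((|X_i|^\ell-i^\beta)1_{\{|X_i|^\ell>i^\beta\}}\bigr)$, and then, for $N$ large enough that $N^\beta\leq \epsilon N/2$, each truncated moment $\mathbb{E}\bigl(|X_i|^\ell 1_{\{|X_i|^\ell>N\epsilon\}}\bigr)$ is dominated by twice that quantity. The only cosmetic difference is that you reach the bound via a Markov estimate and a self-absorption step (using $\mathbb{E}|X_i|^\ell<\infty$), whereas the paper gets it directly from the pointwise inequality $\bigl(|X_i|^\ell-i^\beta\bigr)1_{\{|X_i|^\ell>i^\beta\}}\geq \tfrac{1}{2}|X_i|^\ell 1_{\{|X_i|^\ell>\epsilon N\}}$ on $\{|X_i|^\ell>\epsilon N\}$.
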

	\begin{proof}
		An application of Kronecker's lemma shows that \textbf{TX} implies 
		\begin{equation} \label{sci}
			\lim_{N\to\infty} \dfrac{1}{N}\mathlarger{\sum}_{i=1}^{N}\mathbb{E}\left( \left( |X_i|^\ell-i^\beta\right)  1_{\left\lbrace |X_i|^\ell>i^\beta \right\rbrace} \right)=0.
		\end{equation}
		
		Let $\epsilon>0$, then exists $\bar{N}_{\epsilon,\beta}$ such that for $N\geq \bar{N}_{\epsilon,\beta},$  $N^\beta \leq \dfrac{\epsilon N}{2}$. Therefore if $N\geq \bar{N}_{\epsilon,\beta},$ the following chain of inequalities holds for each $i=1,\cdots,N$
		\begin{align*}
			\left( |X_i|^\ell-i^\beta\right)  1_{\left\lbrace |X_i|^\ell>i^\beta \right\rbrace} &\geq\left( |X_i|^\ell-\dfrac{\epsilon N}{2}\right)  1_{\left\lbrace |X_i|^\ell>\epsilon N \right\rbrace}\\
			&\geq \dfrac{|X_i|^\ell}{2}1_{\left\lbrace |X_i|^\ell>\epsilon N \right\rbrace}.
		\end{align*}
		
		Taking the expectation   and using (\ref{sci}) we can obtain (\ref{lindenberg}).
		
	\end{proof}
	\begin{proof}[Proof of Theorem \ref{independent_case}]

		To study the limit distribution of $\sqrt{N}\left(U\left(\mu_N \right)- U\left(\mu \right)  \right)$, let us consider the following decomposition
		\begin{equation} \label{decomposition}
			\sqrt{N}\left(U\left(\mu_N \right)- U\left(\mu \right)  \right) = \sqrt{N}\left(U\left(\mu_N \right)- U\left(\bar{\nu}_N\right)  \right)+ \sqrt{N}\left(U\left(\bar{\nu}_N \right)- U\left(\mu\right)  \right).
		\end{equation}
		We will prove that $\sqrt{N}\left(U\left(\mu_N \right)- U\left(\bar{\nu}_N\right)  \right) \overset{d}{\Rightarrow} \mathcal{N}\left(0,\int_{\mathbb{R}^d}  \frac{\delta U}{\delta m}\left(\mu,x \right)^2\mu(dx)-\int_{\mathbb{R}^d \times \mathbb{R}^d} \frac{\delta U}{\delta m} (\mu,x)\frac{\delta U}{\delta m} (\mu,y)\eta(dx,dy)  \right)$ and $\sqrt{N}\left(U\left(\bar{\nu}_N \right)- U\left(\mu\right)  \right)\underset{N\rightarrow \infty}{\longrightarrow} \int_{\mathbb{R}^d}\frac{\delta U}{\delta m}(\mu,x)\sigma(dx).$\\
		Let us start by studying the second term in the above decomposition. 		
		
		\subsection*{Limit of $\sqrt{N}\left(U\left(\bar{\nu}_N \right)- U\left(\mu\right)\right)$}
		
		Let us first observe that by Remark \ref{conv_nuntomu}, Assumption \ref{wess.conv2} implies $
		W_\ell\left(\bar{\nu}_N, \mu\right) \underset{N\rightarrow \infty}{\longrightarrow} 0$. Therefore one has
		$$\lim_{N\to\infty} \sup_{s\in \left[0,1 \right]} W_\ell\left(\mu+s(\bar{\nu}_N - \mu),\mu \right)=0.$$
		
		For $N$ bigger than a fixed $N_r$, by Lemma \ref{linearderivative} we can rewrite
		$$\sqrt{N}\left(U\left(\bar{\nu}_N\right)- U\left(\mu\right)  \right) = \sqrt{N}\int_{0}^{1}ds\int_{\mathbb{R}^d}\frac{\delta U}{\delta m}(\mu+s(\bar{\nu}_N - \mu),x)(\bar{\nu}_N-\mu)(dx).$$
		Let us now prove that the above quantity tends to $\int_{\mathbb{R}^d}\frac{\delta U}{\delta m}(\mu,x)\sigma(dx)$ where $\sigma$ is the measure such that $\|\sqrt{N}\left(\bar{\nu}_N-\mu \right) - \sigma\|_\ell \underset{N\rightarrow \infty}{\longrightarrow} 0$ (see Hypothesis \ref{l_convergence}).
		By the triangle inequality, for $N\geq N_r$, one has
		
		\begin{align*}
			&\left|\sqrt{N}\int_{0}^{1}ds\int_{\mathbb{R}^d}\frac{\delta U}{\delta m}(\mu+s(\bar{\nu}_N - \mu),x)(\bar{\nu}_N-\mu)(dx) - \int_{\mathbb{R}^d}\frac{\delta U}{\delta m}(\mu,x)\sigma(dx) \right|\\
			& \phantom{==}\leq \left| \sqrt{N}\int_{0}^{1}ds\int_{\mathbb{R}^d}\frac{\delta U}{\delta m}(\mu+s(\bar{\nu}_N - \mu),x)(\bar{\nu}_N-\mu)(dx) - \sqrt{N}\int_{\mathbb{R}^d}\frac{\delta U}{\delta m}(\mu,x)(\bar{\nu}_N-\mu)(dx)\right|\\
			&\phantom{===}+ \left|\sqrt{N}\int_{\mathbb{R}^d}\frac{\delta U}{\delta m}(\mu,x)(\bar{\nu}_N-\mu)(dx) - \int_{\mathbb{R}^d}\frac{\delta U}{\delta m}(\mu,x)\sigma(dx) \right| .
		\end{align*}

		Recalling that $\left\|\tau \right\|_\ell= \sup_{f:|f(x)|\leq 1+|x|^\ell} \int_{\mathbb{R}^d}f(x)\tau(dx)$ for $\tau \in \mathcal{M}_\ell(\mathbb{R}^d)$, the second component of the right-hand side goes to $0$ thanks to Assumption \ref{l_convergence} and Assumption \textbf{RU\ref{growth2}}.\\
		For what concerns the first component of the right-hand side, we have by Hypothesis \textbf{RU\ref{growth_diff2}}
		
		\begin{align*}
			&\left| \sqrt{N}\int_{0}^{1}ds\int_{\mathbb{R}^d}\frac{\delta U}{\delta m}(\mu+s(\bar{\nu}_N - \mu),x)(\bar{\nu}_N-\mu)(dx) - \sqrt{N}\int_{\mathbb{R}^d}\frac{\delta U}{\delta m}(\mu,x)(\bar{\nu}_N-\mu)(dx)\right|\\
			&\leq \int_{0}^{1}ds\int_{\mathbb{R}^d} \left| \frac{\delta U}{\delta m}(\mu+s(\bar{\nu}_N - \mu),x) - \frac{\delta U}{\delta m}(\mu,x) \right|\left|\sqrt{N}\left(\bar{\nu}_N-\mu \right)  \right|(dx) \\
			&\leq \int_{0}^{1}ds\int_{\mathbb{R}^d} C\left( (1+|x|^\ell)\left\| s(\bar{\nu}_N-\mu)\right\|_0^\alpha +(1+|x|^{\ell (1-\alpha)})\left( \int_{\mathbb{R}^d}|y|^\ell |s(\bar{\nu}_N-\mu)|(dy) \right)^\alpha \right)\left| \sqrt{N}\left(\bar{\nu}_N-\mu \right)  \right|(dx)\\
			&\leq C \left( \left|\bar{\nu}_N-\mu \right| (\mathbb{R}^d)\right)^\alpha\cdot\int_{\mathbb{R}^d}(1+|x|^\ell)\left| \sqrt{N}\left(\bar{\nu}_N-\mu \right)  \right|(dx)\\
			&\phantom{=}+ C \left( \int_{\mathbb{R}^d}|y|^\ell |\bar{\nu}_N-\mu|(dy) \right)^\alpha \int_{\mathbb{R}^d}(1+|x|^{\ell (1-\alpha)})\left| \sqrt{N}\left(\bar{\nu}_N-\mu \right)  \right|(dx).
		\end{align*}

		Since $\|\sqrt{N}\left(\bar{\nu}_N-\mu \right) - \sigma\|_\ell \underset{N\rightarrow \infty}{\longrightarrow} 0$ implies
		$\||\sqrt{N}\left(\bar{\nu}_N-\mu \right)| - |\sigma|\|_\ell \underset{N\rightarrow \infty}{\longrightarrow} 0$, we can deduce that:
		
		\begin{enumerate}[label=(\roman*)]
			\item $\int_{\mathbb{R}^d}(1+|x|^\ell)| \sqrt{N}\left(\bar{\nu}_N-\mu \right) |(dx) \underset{N\rightarrow \infty}{\longrightarrow} \int_{\mathbb{R}^d}(1+|x|^\ell)\sigma(dx) < \infty $\\
			
			\item  $\int_{\mathbb{R}^d}(1+|x|^{\ell (1-\alpha)})| \sqrt{N}\left(\bar{\nu}_N-\mu \right) |(dx) \underset{N\rightarrow \infty}{\longrightarrow} \int_{\mathbb{R}^d}(1+|x|^{\ell (1-\alpha)})\sigma(dx) < \infty$\\
			
			\item $\left|\bar{\nu}_N-\mu \right| (\mathbb{R}^d) = \dfrac{|\sqrt{N}\left( \bar{\nu}_N-\mu\right)| (\mathbb{R}^d)}{\sqrt{N}}\underset{N\rightarrow \infty}{\longrightarrow} 0$\\
			
			\item  $\int_{\mathbb{R}^d}|y|^\ell |\bar{\nu}_N-\mu|(dy) \leq \int_{\mathbb{R}^d}(1+|y|^\ell) |\bar{\nu}_N-\mu|(dy) = \dfrac{\int_{\mathbb{R}^d}(1+|y|^\ell) |\sqrt{N}(\bar{\nu}_N-\mu)|(dy)}{\sqrt{N}} \underset{N\rightarrow \infty}{\longrightarrow} 0.$
			
		\end{enumerate}
		
		To conclude the proof, we have to study the convergence of $\sqrt{N}\left(U\left(\mu_N \right)- U\left(\bar{\nu}_N\right)  \right) $ in the decomposition (\ref{decomposition}).
		
		\subsection*{Limit distribution of $\sqrt{N}\left(U\left(\mu_N \right)- U\left(\bar{\nu}_N\right)  \right)$}	
		\textbf{FIRST STEP}\quad  
		Let us preliminary recall that by Remark \ref{conv_nuntomu}, Theorem \ref{SLLN} implies $W_\ell\left(\mu_N, \mu\right) \underset{N}{\longrightarrow} 0$ a.s.\\
		Let us now define for $i=1,\cdots,N$ and $s\in \left[ 0,1\right] $

		$$\mu_{N}^{i,s} = \frac{1}{N}\sum_{j=1}^{i-1}\delta_{X_j} +\frac{s}{N}\delta_{X_i}+\frac{\left(1-s \right) }{N}\nu_i+\frac{1}{N} \sum_{j=i+1}^{N}\nu_j.$$
		
		We have $\mu_{N}^{N,1}= \mu_{N} $ and $\mu_{N}^{1,0}=\bar{\nu}_N.$ Moreover since for $i=1,\cdots,N-1$ 
		$$\mu_{N}^{i,1} = \mu_{N}^{i+1,0}, $$
		one has
		
		$$\sqrt{N}\left(U\left(\mu_N \right)- U\left(\bar{\nu}_N\right)  \right) = \sqrt{N} \sum_{i=1}^{N}\left(U(\mu_{N}^{i,1}) -U(\mu_{N}^{i,0})  \right) .$$
		
		Let us now show that   
		$$\max_{1\leq i\leq N} \sup_{s\in \left[0,1 \right]} W_\ell(\mu_{N}^{i,s},\mu) \underset{N\rightarrow \infty}{\longrightarrow} 0 \quad a.s.$$
		
	 Since for  $i=1,\cdots,N$ and $s\in \left[0,1 \right]$
		
		$$ \mu_{N}^{i,s} = s\mu_{N}^{i,1}+(1-s)\mu_{N}^{i-1,1}$$
		under the convention $\mu_{N}^{0,1}=\mu_{N}^{1,0},$ we can proceed as in the first step of the proof of Theorem \ref{main_teo} to prove that 
		
		\begin{equation}\label{unifconv2}
			\max_{1\leq i\leq N} \sup_{s\in \left[0,1 \right]} W_\ell\left( \mu_{N}^{i,s},\mu\right)  = \max_{0\leq i\leq N}W_\ell\left( \mu_{N}^{i,1},\mu \right).  
		\end{equation}

		By the triangle inequality, 
		
		\begin{equation}\label{triangine}
			W_\ell\left( \mu_{N}^{i,1},\mu \right) \leq  W_\ell\left( \mu_{N}^{i,1},\bar{\nu}_N \right) +  W_\ell\left(\bar{\nu}_N,\mu \right) 
		\end{equation}
		where $W_\ell\left(\bar{\nu}_N,\mu \right)$ goes to $0$ as $N$ goes to infinity.\\Therefore it is sufficient to prove that $\max_{0\leq i\leq N} W_\ell\left( \mu_{N}^{i,1},\bar{\nu}_N \right)\underset{N\rightarrow \infty}{\longrightarrow}0$ a.s. which we are now going to do.

		For $i=1,\cdots, N$
		
		$$ \mu_{N}^{i,1} = \frac{i}{N}\mu_{i}+\frac{1}{N}\sum_{j=i+1}^{N}\nu_j. $$
		
		Hence let $\gamma\in \Pi(\mu_{i},\bar{\nu}_i)$ and define
		$ \tilde{\gamma}(dx,dy)=\frac{i}{N} \gamma(dx,dy) +\frac{1}{N}\sum_{j=i+1}^{N}\nu_j(dx)\delta_x(dy):$
		one has
		$ \tilde{\gamma}(dx,\mathbb{R}^d) = \mu_{N}^{i,1}(dx) $ and $ \tilde{\gamma}(\mathbb{R}^d,dy)=\bar{\nu}_N(dy)$. If $\ell>0$ we have
		
		\begin{align*}
			W_\ell^{\ell\vee1}\left( \mu_{N}^{i,1},\bar{\nu}_N\right)&\leq \int_{\mathbb{R}^d\times\mathbb{R}^d}|x-y|^\ell\tilde{\gamma}(dx,dy)=\frac{i}{N}\int_{\mathbb{R}^d\times\mathbb{R}^d}|x-y|^\ell \gamma(dx,dy)+\frac{1}{N}\sum_{j=i+1}^{N}\int_{\mathbb{R}^d\times\mathbb{R}^d}|x-y|^\ell\nu_j(dx)\delta_x(dy)\\
			&= \frac{i}{N}\int_{\mathbb{R}^d\times\mathbb{R}^d}|x-y|^\ell \gamma(dx,dy).
		\end{align*}
		
		Taking the infimum over $\gamma$, one has
		
		$$W_\ell\left( \mu_{N}^{i,1},\bar{\nu}_N\right)\leq \left( \frac{i}{N}\right) ^\frac{1}{\ell\vee1}W_\ell\left( \mu_i,\bar{\nu}_i\right). $$
		Finally let $\alpha\in\left(0,1 \right) $ so that
		\begin{align*}
			\max_{0\leq i\leq N}  W_\ell\left( \mu_{N}^{i,1},\bar{\nu}_N \right) &=  \max_{1\leq i\leq N}  W_\ell\left( \mu_{N}^{i,1},\bar{\nu}_N \right) \leq \max_{1\leq i \leq N}\left( \frac{i}{N}\right) ^\frac{1}{\ell\vee1}W_\ell\left( \mu_i,\bar{\nu}_i\right)\\
			&\leq \max_{1\leq i\leq N}\left( \frac{i}{N}\right) ^\frac{1}{\ell\vee1}W_\ell\left( \mu_i,\mu\right)+\max_{1\leq i\leq N}\left( \frac{i}{N}\right) ^\frac{1}{\ell\vee1}W_\ell\left( \mu,\bar{\nu}_i\right)\\
			&\leq
			\max_{1\leq i\leq \llcorner\alpha N\lrcorner}\left( \frac{i}{N}\right) ^\frac{1}{\ell\vee1}W_\ell\left( \mu_i,\mu\right)+
			\max_{\llcorner\alpha N\lrcorner< i\leq N}\left( \frac{i}{N}\right) ^\frac{1}{\ell\vee1}W_\ell\left( \mu_i,\mu\right)\\
			&+
			\max_{1\leq i\leq \llcorner\alpha N\lrcorner}\left( \frac{i}{N}\right) ^\frac{1}{\ell\vee1}W_\ell\left( \mu,\bar{\nu}_i\right)+\max_{\llcorner\alpha N\lrcorner< i\leq N}\left( \frac{i}{N}\right) ^\frac{1}{\ell\vee1}W_\ell\left( \mu,\bar{\nu}_i\right)\\
			&\leq \alpha^\frac{1}{\ell\vee1} \sup_{ i}W_\ell\left( \mu_i,\mu\right) +  \alpha^\frac{1}{\ell\vee1}\sup_{i} W_\ell\left(\mu,\bar{\nu}_i \right)+
			\max_{\llcorner\alpha N\lrcorner< i\leq N}W_\ell\left( \mu_i,\mu\right)+ \max_{\llcorner\alpha N\lrcorner< i\leq N}W_\ell\left(\mu,\bar{\nu}_i \right).
		\end{align*}

		For fixed $\alpha$, the sum of the two last terms goes to $0$ as $N$ goes to infinity while the sum of the first two terms is arbitrarily small for $\alpha$ small.\
		Then it is possible to conclude that the left-hand side goes to $0$ almost surely.\\
		As explained in the first step of the Theorem \ref{main_teo}, we can adapt the reasoning to the case $\ell=0$.\\
		
		\textbf{SECOND STEP}\quad We can now reconsider the following stopping time 
		
		$$I_N = \min\left\lbrace 1\leq i \leq N : \exists s\in\left[ 0,1\right] :W_\ell\left(\mu^{i,s}_{N},\mu \right)\geq r  \right\rbrace$$
		
		for the filtration $\left(\mathcal{F}_i=\sigma\left(X_1,\cdots,X_i\right)\right)_{i\geq1}.$
		Proceeding exactly as in the second step of the proof of the Theorem \ref{main_teo} and keeping the same notations, we can obtain the following decomposition

		$$\sqrt{N}\left(U\left(\mu_N \right)- U\left(\bar{\nu}_N\right)  \right) = \sqrt{N}\left(U\left(\mu_N \right)- U\left(\bar{\nu}_N\right) - Q_N \right) + \sqrt{N}Q_N $$
		where 
		$$Q_N :=\frac{1}{N}\sum_{i=1}^{N}\left( \frac{\delta U}{\delta m}(\mu_{N}^{i\wedge I_N,0},X_i) -\int_{\mathbb{R}^d} \frac{\delta U}{\delta m}(\mu_{N}^{i\wedge I_N,0},x)\nu_i(dx) \right)$$
		and for $N\geq N^*$, $U\left(\mu_N \right)- U\left(\bar{\nu}_N\right) - Q_N$ coincides with
		$$R_N = \frac{1_{N\geq N^*}}{N}\sum_{i=1}^{N}\int_{0}^{1}ds\int_{\mathbb{R}^d} \left(\frac{\delta U}{\delta m}(\mu_{N}^{i,s},x)- \frac{\delta U}{\delta m}(\mu_{N}^{i,0},x)\right)(\delta_{X_i}-\nu_i)(dx). $$\\
		
		\textbf{THIRD STEP}\quad Let us first prove the convergence in $L^1$ of $\sqrt{N}R_N$ to $0$. By applying the same argument as in the proof of Theorem \ref{main_teo} this is sufficient to deduce that  $\sqrt{N}\left(U\left(\mu_N \right)- U\left(\bar{\nu}_N\right) - Q_N \right)$ goes to $0$ in probability. \\
		By Assumption \textbf{RU\ref{growth_diff2}}, $\exists C < \infty$, $\exists \alpha \in \left(\frac{1}{2},1 \right] $ such that for $N\geq N^*$
		$$ \bigg\rvert\frac{\delta U}{\delta m}(\mu_{N}^{i,s},x)- \frac{\delta U}{\delta m}(\mu_{N}^{i,0},x)\bigg\rvert \leq C\left((1+|x|^\ell)\left\|\mu_{N}^{i,s}-\mu_{N}^{i,0} \right\|_0^\alpha +(1+|x|^{\ell(\alpha-1)})\left(\int_{\mathbb{R}^d}|y|^\ell|\mu_{N}^{i,s}-\mu_{N}^{i,0}|(dy) \right)^\alpha  \right)  $$ 
		
		with
		$$|\mu_{N}^{i,s} -\mu_{N}^{i,0}|(dy) \leq \frac{s}{N}(\delta_{X_i}+\nu_i)(dy)$$
		and
		$$ \left\| \mu_{N}^{i,s}-\mu_{N}^{i,0}\right\|_0  = |\mu_{N}^{i,s} -\mu_{N}^{i,0}|(\mathbb{R}^d)=\frac{s}{N}|\delta_{X_i}-\nu_i|(\mathbb{R}^d)\leq \frac{s}{N}
		(\delta_{X_i}+\nu_i)(\mathbb{R}^d)= \frac{2s}{N}.$$ 
		Substituting the above quantities and using Young's inequality, one obtains
		\begin{align*}
			&\bigg\rvert\frac{\delta U}{\delta m}(\mu_{N}^{i,s},x)- \frac{\delta U}{\delta m}(\mu_{N}^{i,0},x)\bigg\rvert  \leq C\left((1+|x|^\ell)\left( \frac{2s}{N}\right) ^\alpha+(1+|x|^{\ell(1-\alpha)})\left(\int_{\mathbb{R}^d}|y|^\ell(\delta_{X_i}+\nu_i)(dy) \right)^\alpha \left( \frac{s}{N}\right) ^\alpha \right)\\
			&\phantom{=}\leq \frac{2^\alpha C}{N^\alpha}  \left((1+|x|^\ell)+\alpha\left(|X_i|^\ell+\int_{\mathbb{R}^d}|y|^\ell\nu_i(dy) \right)+(1-\alpha)+\alpha \left(|X_i|^\ell+\int_{\mathbb{R}^d}|y|^\ell\nu_i(dy) \right)+|x|^\ell(1-\alpha)\right)\\
			&\phantom{=}\leq \frac{2^\alpha C}{N^\alpha} \left((1+|x|^\ell)+\left(|X_i|^\ell+\int_{\mathbb{R}^d}|y|^\ell\nu_i(dy) \right)+\frac{1}{2}+ \left(|X_i|^\ell+\int_{\mathbb{R}^d}|y|^\ell\nu_i(dy) \right)+\frac{|x|^\ell}{2}\right) \\
			&\phantom{=}= \frac{2^\alpha C}{N^\alpha}  \left(\frac{3}{2}(1+|x|^\ell)+ 2|X_i|^\ell +2\int_{\mathbb{R}^d}|y|^\ell\nu_i(dy) \right).
		\end{align*}
		
		Therefore
		\begin{align*}
			|R_N|
			&\leq \frac{1_{N\geq N^*}}{N}\sum_{i=1}^{N}\int_{0}^{1}ds\int_{\mathbb{R}^d} \bigg\rvert\frac{\delta U}{\delta m}(\mu_{N}^{i,s},x)- \frac{\delta U}{\delta m}(\mu_{N}^{i,0},x)\bigg\rvert(\delta_{X_i}+\nu_i)(dx)\\
			&\leq \frac{2^\alpha C}{N^{\alpha+1}} \sum_{i=1}^{N}\int_{\mathbb{R}^d}\left(\frac{3}{2}(1+|x|^\ell)+ 2|X_i|^\ell +2\int_{\mathbb{R}^d}|y|^\ell\nu_i(dy) \right)(\delta_{X_i}+\nu_i)(dx)\\
			& = \frac{2^\alpha C}{N^{\alpha+1}}\sum_{i=1}^{N}\left( 3 + \frac{11}{2}\int_{\mathbb{R}^d}|y|^\ell\delta_{X_i}(dy) + \frac{11}{2}\int_{\mathbb{R}^d}|y|^\ell\nu_i(dy)\right)\\
			& = \frac{C_1}{N^{\alpha}}+ \frac{C_2}{N^{\alpha}}\int_{\mathbb{R}^d}|y|^\ell\mu_{N}(dy)+\frac{C_3}{N^{\alpha}}\int_{\mathbb{R}^d}|y|^\ell\bar{\nu}_N(dy)
		\end{align*}

		for some positive constants $C_1,C_2$ and $C_3$.\\
		Finally   
		
		$$\mathbb{E}(\sqrt{N}|R_N|) \leq \frac{C_1}{N^{\alpha-\frac{1}{2}}}+\frac{C_2}{N^{\alpha-\frac{1}{2}}}\int_{\mathbb{R}^d}|y|^\ell\bar{\nu}_{N}(dy)+\frac{C_3}{N^{\alpha-\frac{1}{2}}}\int_{\mathbb{R}^d}|y|^\ell\bar{\nu}_N(dy)$$
		
		and since $\alpha > \frac{1}{2}$ and $\lim_{N\to\infty} \int_{\mathbb{R}^d}|y|^\ell\bar{\nu}_{N}(dy)=\int_{\mathbb{R}^d}|y|^\ell\mu(dy) $, we can conclude that the left-hand side goes to $0$ as $N$ goes to infinity and so the third step is concluded.\\ 
		
		\textbf{FOURTH STEP}\quad We are now going to prove the convergence in distribution of $\sqrt{N}Q_N$ to a Gaussian random variable. In this case, we can immediately apply the Central Limit Theorem for martingales while in Theorem \ref{main_teo} we had to introduce first the Poisson equation before applying it to $K_{3,N}$.
		For $1\leq i \leq N$ let
		
		$$Y_{N,i}=\frac{1}{\sqrt{N}}\left( \frac{\delta U}{\delta m}\left(\mu^{i\wedge I_N,0}_{N},X_i \right)-\int_{\mathbb{R}^d}\frac{\delta U}{\delta m}\left(\mu^{i\wedge I_N,0}_{N},x \right) \nu_i(dx)\right).$$\\
		
		Let us start by \textbf{checking that $\mathbb{E}\left(Y_{N,i}|\mathcal{F}_{i-1}\right)=0$. }\\
		
		Since $X_i$ is independent of $\mathcal{F}_{i-1}$ and $\mu^{i\wedge I_N,0}_{N}=\sum_{j=1}^{i-1}1_{\left\lbrace I_N=j\right\rbrace }\mu^{j,0}_{N}+1_{\left\lbrace I_N>i-1\right\rbrace}\mu^{i,0}_{N}$ is $\mathcal{F}_{i-1}$- measurable, applying the Freezing Lemma one has
		
		\begin{align*}
			\mathbb{E}\left(Y_{N,i}|\mathcal{F}_{i-1}\right) &= \mathbb{E}\left(\frac{1}{\sqrt{N}}\left( \frac{\delta U}{\delta m}\left(\mu^{i\wedge I_N,0}_{N},X_i \right)-\int_{\mathbb{R}^d}\frac{\delta U}{\delta m}\left(\mu^{i\wedge I_N,0}_{N},x \right) \nu_i(dx)\right)|\mathcal{F}_{i-1}\right)\\
			&= \frac{1}{\sqrt{N}}\left(\mathbb{E}\left( \frac{\delta U}{\delta m}\left(t,X_i \right) \right)_{t=\mu^{i\wedge I_N,0}_{N}} - \int_{\mathbb{R}^d}\frac{\delta U}{\delta m}\left(\mu^{i\wedge I_N,0}_{N},x \right) \nu_i(dx)\right) = 0.
		\end{align*}
		
		For what concerns the \textbf{square integrability}, let us check that both the components are squared integrable by applying Hypothesis \textbf{RU\ref{growth2}} and the fact that $\nu_{i} \in \mathbb{P}_\ell\left(\mathbb{R}^d \right).$   \\
		\begin{itemize}
			\item 
			$$\mathbb{E}\left( \frac{\delta U}{\delta m}\left(\mu^{i\wedge I_N,0}_{N},X_i \right)^2\right)\leq C^2 \mathbb{E}\left( \left(1+|X_i|^\frac{\ell}{2} \right)^2 \right)\leq 2C^2 \mathbb{E}\left( 1+|X_i|^\ell \right) < \infty$$ 
			\item $$\mathbb{E}\left(\left( \int_{\mathbb{R}^d}\frac{\delta U}{\delta m}\left(\mu^{i\wedge I_N,0}_{N},x \right) \nu_i(dx) \right) ^2\right) \leq \mathbb{E}\left( \int_{\mathbb{R}^d}\frac{\delta U}{\delta m}\left(\mu^{i\wedge I_N,0}_{N},x \right)^2 \nu_i(dx)  \right) \leq 2C^2 \left( 1+\int_{\mathbb{R}^d}|x|^\ell\nu_i(dx)\right)<\infty.$$\\
		\end{itemize}
		Let us now study the \textbf{convergence of
			$\sum_{i=1}^{N}\mathbb{E}\left(Y^2_{N,i}|\mathcal{F}_{i-1} \right):$}\\
		\begin{align*}
			\sum_{i=1}^{N}\mathbb{E}\left(Y^2_{N,i}|\mathcal{F}_{i-1} \right)&= \frac{1}{N}\sum_{i=1}^{N}\mathbb{E}\left(\left( \frac{\delta U}{\delta m}\left(\mu^{i\wedge I_N,0}_{N},X_i \right)-\int_{\mathbb{R}^d}\frac{\delta U}{\delta m}\left(\mu^{i\wedge I_N,0}_{N},x \right) \nu_i(dx)\right)^2 \arrowvert\mathcal{F}_{i-1} \right)\\
			&=  \frac{1}{N}\sum_{i=1}^{N}\left(  \int_{\mathbb{R}^d}  \frac{\delta U}{\delta m}\left(\mu^{i\wedge I_N,0}_{N},x \right)^2\nu_i(dx)-\left( \int_{\mathbb{R}^d}\frac{\delta U}{\delta m}\left(\mu^{i\wedge I_N,0}_{N},x \right)\nu_i(dx) \right)^2\right) .
		\end{align*}

		As in the fourth step of the proof of the Theorem \ref{main_teo}, thanks to Hypothesis \textbf{RU\ref{derivative_conv2}} and thanks to the a.s. convergence of $\max_{1\leq i\leq N} W_\ell(\mu_{N}^{i,0},\mu)$ to $0$, we obtain the existence of a sequence of random variables $\left( \epsilon_N\right)_{N\geq 0} $ converging a.s. to $0$ such that 
		\begin{equation} \label{control2}
			\forall 1\leq i\leq N\qquad \bigg\rvert\frac{\delta U}{\delta m}\left(\mu^{i\wedge I_N,0}_N,x \right)-\frac{\delta U}{\delta m}\left( \mu,x \right)\bigg\rvert\leq\left(1+|x|^\frac{\ell}{2}\right)\epsilon_N.
		\end{equation}
		
		\underline{First Component}\quad Let us first show that 
		$$\lim_{N\to\infty}\frac{1}{N}\sum_{i=1}^{N} \int_{\mathbb{R}^d}  \frac{\delta U}{\delta m}\left(\mu^{i\wedge I_N,0}_{N},x \right)^2\nu_i(dx) -  \int_{\mathbb{R}^d}  \frac{\delta U}{\delta m}\left(\mu,x \right)^2\mu(dx)=0\,\ a.s. $$
		We can rewrite the difference in the following way
		\begin{align*}
			\phantom{==============}&\frac{1}{N}\sum_{i=1}^{N} \int_{\mathbb{R}^d}  \frac{\delta U}{\delta m}\left(\mu^{i\wedge I_N,0}_{N},x \right)^2\nu_i(dx) - \frac{1}{N}\sum_{i=1}^{N} \int_{\mathbb{R}^d}  \frac{\delta U}{\delta m}\left(\mu,x \right)^2\nu_i(dx)\\
			& \phantom{=}+ \frac{1}{N}\sum_{i=1}^{N} \int_{\mathbb{R}^d}  \frac{\delta U}{\delta m}\left(\mu,x \right)^2\nu_i(dx) -  \int_{\mathbb{R}^d}  \frac{\delta U}{\delta m}\left(\mu,x \right)^2\mu(dx)   \\
			& = \frac{1}{N}\sum_{i=1}^{N} \int_{\mathbb{R}^d}  \left(\frac{\delta U}{\delta m}\left(\mu^{i\wedge I_N,0}_{N},x \right)^2\ -  \frac{\delta U}{\delta m}\left(\mu,x \right)^2 \right) \nu_i(dx)+ \int_{\mathbb{R}^d}  \frac{\delta U}{\delta m}\left(\mu,x \right)^2\left(\bar{\nu}_N-\mu \right)(dx).
		\end{align*}
		
		Since  $\lim_{N\to\infty}W_\ell(\bar{\nu}_N,\mu)=0$, by the characterization (\ref{carat_wass}), we can deduce that the second term of the sum goes to $0$ thanks to Assumption \textbf{RU\ref{growth2}} and Assumption \textbf{RU\ref{continuity}}.
		For what concerns the first term for $i=1,\cdots,N$, using Assumption \textbf{RU\ref{growth2}} and (\ref{control2}), one has
		
		\begin{align*}
			\bigg\rvert \frac{\delta U}{\delta m}\left(\mu^{i\wedge I_N,0}_{N},x \right)^2\ -  \frac{\delta U}{\delta m}\left(\mu,x \right)^2\bigg\rvert&\leq \left( \frac{\delta U}{\delta m}\left(\mu^{i\wedge I_N,0}_{N},x \right) -  \frac{\delta U}{\delta m}\left(\mu,x \right)\right)^2+2\bigg\rvert\frac{\delta U}{\delta m}\left(\mu,x \right) \bigg\rvert \bigg\rvert \frac{\delta U}{\delta m}\left(\mu^{i\wedge I_N,0}_{N},x \right)\ -  \frac{\delta U}{\delta m}\left(\mu,x \right)\bigg\rvert\\
			&\leq \left(1+|x|^\frac{\ell}{2}\right)^2\epsilon^2_N+2C\left(1+|x|^\frac{\ell}{2}\right)^2\epsilon_N \leq 2\left(1+|x|^\ell\right)   \epsilon_N \left(\epsilon_N+2C\right).
		\end{align*}

		Therefore
		\begin{align*}
			\bigg\rvert\frac{1}{N}\sum_{i=1}^{N}\int_{\mathbb{R}^d}  \left(\frac{\delta U}{\delta m}\left(\mu^{i\wedge I_N,0}_{N},x \right)^2\ -  \frac{\delta U}{\delta m}\left(\mu,x \right)^2 \right) \nu_i(dx)\bigg\rvert&\leq2 \epsilon_N \left(\epsilon_N+2C\right) \frac{1}{N}\sum_{i=1}^{N}\int_{\mathbb{R}^d}\left(1+|x|^\ell\right)\nu_i(dx)\\
			&= 2 \epsilon_N \left(\epsilon_N+2C\right) \left(1+\int_{\mathbb{R}^d}|x|^\ell\bar{\nu}_N(dx) \right) 
		\end{align*}

		where the right-hand side goes to $0$ a.s. since $\lim_{N\to\infty}\int_{\mathbb{R}^d}|x|^\ell\bar{\nu}_N(dx)= \int_{\mathbb{R}^d}|x|^\ell\mu(dx)$.\\
		
		\underline{Second Component} We are going to prove that 
		$$\lim_{N\to\infty} \frac{1}{N}\sum_{i=1}^{N} \left( \int_{\mathbb{R}^d}\frac{\delta U}{\delta m}\left(\mu^{i\wedge I_N,0}_{N},x \right)\nu_i(dx) \right)^2 -\int_{\mathbb{R}^d \times \mathbb{R}^d} \frac{\delta U}{\delta m} (\mu,x)\frac{\delta U}{\delta m} (\mu,y)\eta(dx,dy)=0 \,\, a.s. $$
		Let us rewrite the difference in the following way
		
		$$\frac{1}{N}\sum_{i=1}^{N}\left(  \left( \int_{\mathbb{R}^d}\frac{\delta U}{\delta m}\left(\mu^{i\wedge I_N,0}_{N},x \right)\nu_i(dx) \right)^2 -  \left( \int_{\mathbb{R}^d}\frac{\delta U}{\delta m}\left(\mu,x \right)\nu_i(dx) \right)^2\right) $$
		$$+\left( \frac{1}{N}\sum_{i=1}^{N}  \int_{\mathbb{R}^d \times \mathbb{R}^d}\frac{\delta U}{\delta m}\left(\mu,x \right)\frac{\delta U}{\delta m}\left(\mu,y\right)\nu_i(dx)\nu_i(dy)  - \int_{\mathbb{R}^d \times \mathbb{R}^d} \frac{\delta U}{\delta m} (\mu,x)\frac{\delta U}{\delta m} (\mu,y)\eta(dx,dy)\right) $$\\
		Since Hypothesis \ref{wess.conv2} holds, again by the characterization (\ref{carat_wass}), we can deduce that the second term of the sum goes to $0$ a.s. thanks to Hypothesis \textbf{RU\ref{growth2}} and Hypothesis \textbf{RU\ref{continuity}}. For what concerns the first term, using Hypothesis \textbf{RU\ref{growth2}} and (\ref{control2}), one has that for $i=1,\cdots,N$ 
		
		\begin{align*}
			&\left|\left(   \int_{\mathbb{R}^d}\frac{\delta U}{\delta m}\left(\mu^{i\wedge I_N,0}_{N},x \right)\nu_i(dx) \right)^2- \left( \int_{\mathbb{R}^d}\frac{\delta U}{\delta m}\left(\mu,x \right)\nu_i(dx) \right)^2 \right|\\
			&\leq \left(    \int_{}\left( \frac{\delta U}{\delta m}\left(\mu^{i\wedge I_N,0}_{N},x \right)-\frac{\delta U}{\delta m}\left(\mu,x \right)\right) \nu_i(dx)   \right)^2 + 2\left|\int_{}\frac{\delta U}{\delta m}\left(\mu,x \right)\nu_i(dx) \right| \left|\int_{}\left( \frac{\delta U}{\delta m}\left(\mu^{i\wedge I_N,0}_{N},x \right)-\frac{\delta U}{\delta m}\left(\mu,x \right)\right) \nu_i(dx) \right|\\
			&\leq     \int_{}\left( \frac{\delta U}{\delta m}\left(\mu^{i\wedge I_N,0}_{N},x \right)-\frac{\delta U}{\delta m}\left(\mu,x \right)\right)^2 \nu_i(dx)    + 2\int_{}\left|\frac{\delta U}{\delta m}\left(\mu,x \right)\right|\nu_i(dx)  \int_{\mathbb{R}^d}\left|\left( \frac{\delta U}{\delta m}\left(\mu^{i\wedge I_N,0}_{N},x \right)-\frac{\delta U}{\delta m}\left(\mu,x \right)\right)\right| \nu_i(dx)\\
			&\leq 2\epsilon_N^2\int_{\mathbb{R}^d} (1+|x|^\ell)\nu_i(dx)+ 2C\epsilon_N \int_{\mathbb{R}^d}\left(1+|x|^\frac{\ell}{2} \right)  \nu_i(dx)       \int_{\mathbb{R}^d}\left(1+|x|^\frac{\ell}{2} \right)\nu_i(dx) \\
			&\leq 2\epsilon_N^2\int_{\mathbb{R}^d} (1+|x|^\ell)\nu_i(dx)+ 4C\epsilon_N \int_{\mathbb{R}^d}\left(1+|x|^\ell \right)  \nu_i(dx) = \left(2\epsilon_N^2+ 4C\epsilon_N \right) \left(1+ \int_{\mathbb{R}^d} |x|^\ell\nu_i(dx)\right).
		\end{align*}

		Therefore 
		\begin{align*}
			&\frac{1}{N}\sum_{i=1}^{N} \left( \int_{\mathbb{R}^d}\frac{\delta U}{\delta m}\left(\mu^{i\wedge I_N,0}_{N},x \right)\nu_i(dx) \right)^2 - \frac{1}{N}\sum_{i=1}^{N} \left( \int_{\mathbb{R}^d}\frac{\delta U}{\delta m}\left(\mu,x \right)\nu_i(dx) \right)^2\\
			&\phantom{==}\leq  \left(2\epsilon_N^2+4C\epsilon_N \right)\left(1+\int_{\mathbb{R}^d}\left|x\right|^\ell\bar{\nu}_N(dx) \right)
		\end{align*}
		
		and the left-hand side goes to $0$ since $\lim_{N\to\infty}\int_{\mathbb{R}^d}|x|^\ell\bar{\nu}_N(dx)= \int_{\mathbb{R}^d}|x|^\ell\mu(dx)$.\\

		To finally apply the Central Limit Theorem for martingales (see Corollary $3.1$ \cite{HallHeyde}) to conclude that $$\sqrt{N}Q_N=\sum_{i=1}^{N}Y_{N,i}\underset{d}{\Longrightarrow} \mathcal{N}\left(0, \int_{\mathbb{R}^d}  \frac{\delta U}{\delta m}\left(\mu,x \right)^2\mu(dx)-\int_{\mathbb{R}^d \times \mathbb{R}^d} \frac{\delta U}{\delta m} (\mu,x)\frac{\delta U}{\delta m} (\mu,y)\eta(dx,dy) \right),$$ it remains just to verify that the \textbf{Lindeberg condition} holds.
		
		We need to check that for $\epsilon>0$, $\sum_{i=1}^{N}\mathbb{E}\left(Y^2_{N,i} 1_{\left\lbrace Y^2_{N,i}> \epsilon \right\rbrace }| \mathcal{F}_{i-1}\right) $ goes to $0$ in probability as $N\rightarrow \infty$.\\
		By Jensen's inequality and Hypothesis  \textbf{RU\ref{growth2}}, one has
		
		\begin{align*}
			NY^2_{N,i} &= \left( \frac{\delta U}{\delta m}\left(\mu^{i\wedge I_N,0}_{N},X_i \right)-\int_{\mathbb{R}^d}\frac{\delta U}{\delta m}\left(\mu^{i\wedge I_N,0}_{N},x \right) \nu_i(dx)\right)^2\\
			&\leq2\frac{\delta U}{\delta m}\left(\mu^{i\wedge I_N,0}_{N},X_i \right)^2+2 \int_{\mathbb{R}^d}\frac{\delta U}{\delta m}\left(\mu^{i\wedge I_N,0}_{N},x \right)^2\nu_i(dx)\\
			&\leq 4C^2\left( 1+ |X_i|^\ell \right)+ 4C^2 \int_{\mathbb{R}^d}\left( 1+ |x|^\ell \right)\nu_i(dx) = 4C^2\left(2+|X_i|^\ell+\int_{\mathbb{R}^d} |x|^\ell\nu_i(dx)  \right).
		\end{align*}

		Therefore, as in the fourth step of the proof of the Theorem \ref{main_teo}, it is enough to check that for each $\epsilon>0$ 
		
		$$2\times1_{\left\lbrace \frac{2}{N} > \epsilon\right\rbrace }+\sum_{i=1}^{N}\mathbb{E}\left(\frac{|X_i|^\ell}{N} 1_{\left\lbrace |X_i|^\ell>N\epsilon \right\rbrace}  \big\arrowvert\mathcal{F}_{i-1}\right)+ \sum_{i=1}^{N}   \frac{\int_{\mathbb{R}^d}|x|^\ell\nu_i(dx)}{N}  1_{\left\lbrace \int_{\mathbb{R}^d}|x|^\ell\nu_i(dx) > N\epsilon\right\rbrace } $$\\
		goes to $0$ as $N$ goes to infinity (a.s.).\\
		It is immediate to prove that the \textbf{first component} goes to $0$ while the \textbf{second component} goes to $0$ by the independence of $X_i$ by $\mathcal{F}_{i-1}$ and (\ref{lindenberg}). For what concerns the \textbf{third component}, we have that $\bar{y}_N:= \frac{1}{N}\sum_{i=1}^{N}\int_{\mathbb{R}^d}|x|^\ell\nu_i(dx) \rightarrow  \int_{\mathbb{R}^d}|x|^\ell\mu(dx).$ Therefore
		
		$$\frac{\int_{\mathbb{R}^d}|x|^\ell\nu_N(dx)}{N} = \bar{y}_N - \frac{N-1}{N}\bar{y}_{N-1}\underset{N\to\infty}{\longrightarrow} 0 $$
		that implies $\forall \epsilon >0$
		$$\int_{\mathbb{R}^d}|x|^\ell\nu_N(dx) 1_{\left\lbrace   \int_{\mathbb{R}^d}|x|^\ell\nu_N(dx) >N \epsilon \right\rbrace} \underset{N\to\infty}{\longrightarrow} 0  $$ 
		and taking the Cesaro mean we deduce that 
		
		$$\lim_{N\to\infty} \frac{1}{N} \sum_{i=1}^{N} \int_{\mathbb{R}^d}|x|^\ell\nu_i(dx)  1_{\left\lbrace \int_{\mathbb{R}^d}|x|^\ell\nu_i(dx) > N\epsilon\right\rbrace } \leq \lim_{N\to\infty} \frac{1}{N} \sum_{i=1}^{N}\int_{\mathbb{R}^d}|x|^\ell\nu_i(dx)1_{\left\lbrace   \int_{\mathbb{R}^d}|x|^\ell\nu_i(dx) >i \epsilon \right\rbrace} =0$$
		
		and so the proof is concluded.
		
	\end{proof}

\end{document}